\documentclass{amsart}
\textwidth=125 mm
\textheight=195 mm
\usepackage[margin=1in]{geometry}
\usepackage{amsmath}
\usepackage{amssymb}
\usepackage{amsthm}
\usepackage{amscd}
\usepackage{enumerate}


\usepackage{graphicx}
\usepackage{amsmath,amsthm,amsfonts,amscd,amssymb,comment,eucal,latexsym,mathrsfs}
\usepackage{stmaryrd}
\usepackage[all]{xy}

\usepackage{epsfig}

\usepackage[all]{xy}
\xyoption{poly}
\usepackage{fancyhdr}
\usepackage{wrapfig}
\usepackage{epsfig}



\theoremstyle{plain}
\newtheorem{thm}{Theorem}[section]
\newtheorem{prop}[thm]{Proposition}
\newtheorem{lem}[thm]{Lemma}
\newtheorem{cor}[thm]{Corollary}
\newtheorem{conj}{Conjecture}

\theoremstyle{definition}
\newtheorem{defn}[thm]{Definition}
\theoremstyle{remark}

\newtheorem{example}{Example}




  \def\C{{\mathbb{C}}}   \def\F{{\mathbb{F}}}        \def\N{{\mathbb{N}}}   \def\Q{{\mathbb{Q}}} \def\R{{\mathbb{R}}}  \def\T{{\mathbb{T}}}      \def\Z{{\mathbb{Z}}}





















\newcommand\vre{\varepsilon}
\newcommand\varpesilon{\vre}
\newcommand\varespilon{\vre}


\newcommand\Bor{{\operatorname{Bor}}}

\newcommand\Det{\operatorname{det}}

\newcommand\id{\operatorname{id}}

\newcommand\Meas{{\operatorname{Meas}}}

\newcommand\Prob{\operatorname{Prob}}

\renewcommand\Re{\operatorname{Re}}

\newcommand\supp{\operatorname{supp}}

\newcommand\Span{\operatorname{span}}


\newcommand{\actson}{\curvearrowright}
\newcommand{\actons}{\curvearrowright}

\newcommand{\ip}[1]{\langle #1 \rangle}

	  \linespread{1.4}

\begin{document}
\title{Harmonic Models and Bernoullicity}      
\author{Ben Hayes}\thanks{The author gratefully acknowledges support from  NSF Grant DMS-1827376.}
\address{University of Virginia\\
          Charlottesville, VA 22904}
\email{brh5c@virginia.edu}
\date{\today}
\maketitle

\begin{abstract}
We give many examples of algebraic actions which are factors of Bernoulli shifts. These include certain harmonic models over left orderable groups of large enough growth, as well as algebraic actions associated to certain lopsided elements in any left orderable group. For many of our examples, the acting group is amenable so these actions are Bernoulli (and not just a factor of a Bernoulli), but there is no obvious Bernoulli partition.
\end{abstract}

\tableofcontents

\section{Introduction}

The goal of this paper is to give many examples of algebraic actions which are either Bernoulli shifts, or factors of Bernoulli shifts. Given a countable, discrete, group $G$ and a probability space $(B,\beta)$ the \emph{Bernoulli shift with base $(B,\beta)$} is the action $G\actson (B^{G},\beta^{\otimes G})$ given by $(gb)(h)=b(g^{-1}h)$ for $h,g\in G,b\in B^{G}.$  Bernoulli shifts have been a natural class of action of interest since the beginning of ergodic theory. In many ways, this is because they are the most natural example of a probability measure-preserving action of a group and are in some sense an ergodic theoretic version of the action of a group on itself.

 Bernoulli shifts are also inherently tied with dynamical entropy: the first application of the Kolmogorov-Sina\v\i\ entropy was to show that Bernoulli shifts with different base entropy are not isomorphic (here the base entropy is $-\sum_{b\in B}\beta(\{b\})\log\beta(\{b\})$). Work of Ornstein \cite{OrnClassify1, OrnClassify2} then showed that the class of Bernoulli shifts actions over $\Z$ are completely classified by dynamical entropy. This was extended to the amenable case by Ornstein-Weiss in \cite{OrnWeiss}. A striking combination of recent results shows that the same is true for the class of \emph{sofic} groups: Bowen showed in \cite{Bow} that two Bernoulli shifts with different base entropy are nonisomorphic when the acting group is sofic, and a recent result of Seward \cite{SewardOrn} (following up on work of Bowen in \cite{BowenOrn}) shows that  for \emph{any} group if two probability spaces have the same Shannon entropy, then the Bernoulli shifts with that base are isomorphic. It is not known if two Bernoulli shifts with different base entropies are not isomorphic for general groups.

Another significant aspect of the study of Bernoulli shifts is that, for amenable groups, we can often say many actions are Bernoulli even if they have no obvious generating partition with independent translates. This is due to \emph{Ornstein theory}, first developed by Ornstein for the case of $\Z$ \cite{OrnClassify1, OrnClassify2}, and then by Ornstein-Weiss for amenable groups \cite{OrnWeiss}. Of particular interest for us are \emph{algebraic actions:} these are actions of a countable, discrete, group $G$ by continuous automorphisms of a compact, metrizable group $X.$ If we give $X$ the Haar measure $m_{X},$ then $G\actson (X,m_{X})$ is a probability measure-preserving action. When $G=\Z^{d}$ there are many results that say that frequently algebraic actions are Bernoulli: for example, Katznelson showed in \cite{KatzErgAut} that ergodic automorphism of a finite-dimensional tours are Bernoulli,  Lind showed   that the same is true for an infinite torus \cite{LindErgAut}, and Rudolph-Schmidt showed \cite{RudolphSchmidtCPE} that  algebraic actions of $\Z^{d}$ with completely positive entropy are Bernoulli.  This can be used to efficiently ``detect" Bernoullicity of many natural algebraic actions of $\Z^{d}.$ Unfortunately, little is known about Bernoullicity of algebraic actions outside of the $\Z^{d}$ case, even in the amenable case. For example, it is still not known if an algebraic action of an amenable group with completely positive entropy is Bernoulli.

It is known that for non-amenable groups there can be factors of Bernoulli shifts which are not Bernoulli. One example is the Popa factor: $G\actson (\T^{G}/\T,m_{\T^{G}/\T})$ where $\T=\R/\Z$ and $\T$ is viewed inside of $\T^{G}$ as the set of elements with constant coordinates. If $G$ has Property (T), then Popa showed  (see \cite[Theorem 1]{PopaCohomologyOE}, see also \cite{PopaSasyk} for related results) that the Popa factor is not Bernoulli, and it is clearly an algebraic action. On the other hand, when $G$ is treeable, then the Popa factor is Bernoulli, by Gaboriau-Seward \cite{GabSewPopaFactor}. There are other nice examples of algebraic actions of free groups which are Bernoulli, and not obviously so, due to Lind-Schmidt in \cite{LindSchmidtFreeBern}. Outside of these, we do not known of many nonobvious examples of algebraic actions  that are even \emph{factors} of Bernoulli.

For readers who are less familiar with the nonamenable setting, let us remark that being a factor of Bernoulli still has significant consequences in the nonamenable case: if the acting group is sofic then the action has completely positive entropy \cite{KerrCPE},  it is solidly ergodic by the work of Chifan-Ioana \cite{SolidErg}, it is mixing of all orders, has spectral gap, etc. Because of this, we have several conditions which guarantee that an action is \emph{not} a factor of a Bernoulli shift when the acting group is nonamenable: if it is not mixing, if it is orbit equivalent to a compact action, if it is not strongly ergodic, if its Koopman representation does not embed into an infinite direct sum of the left regular representation, if it not solidly ergodic. For example, if $\pi$ is a unitary representation of a group $G,$ and if $\pi$ does not embed into an infinite direct sum of the left regular representation, then the corresponding Gaussian action is \emph{not} a factor of a Bernoulli shift.
 If the acting group is assumed sofic, we can also say that any action with zero sofic entropy with respect to some sofic approximation is not a factor of a Bernoulli shift. By a recent result of Bowen \cite{BowenGeneric}, this implies that a \emph{generic} action of a sofic group is not a factor of a Bernoulli shift. Special to the sofic case, we can also exhibit actions that are inverse limits of Bernoulli shifts but have zero entropy and are thus not factors of a Bernoulli shift (see \cite[Corollary 4.4]{BowenGeneric}). Moreover, if $H\leq G,$ and if $H\actson (Z,\zeta)$ is a probability measure-preserving action which is not a factor of a Bernoulli shift over $H,$ then the coinduction of $H$ to $G$ gives an action which is not a factor of a Bernoulli shift. From this and \cite{BowenGeneric,SchmidtSpectralGap} one can show that given if $G$ is any group with an infinite subgroup $H$ so that $H$ is either sofic, or does not have Property (T), then $G$ has an ergodic action which is not a factor of a Bernoulli shift. Such an action can automatically be made free by taking a product with a Bernoulli shift (this preserves ergodicity as well as the property of ``not a factor of a Bernoulli shift").

 In fact, in the nonamenable setting, it is actually harder to do the reverse: find criterion on  an ergodic action which guarantees that it does not \emph{factor onto} a Bernoulli shift. For example, no compact action can factor onto a Bernoulli shift (and in the nonamenable setting, even an action orbit equivalent to a compact action cannot factor onto a Bernoulli shift see \cite[Section 4.6.1]{BowenExamples}), but being not mixing, or having a Koopman representation which is not embeddable into the infinite direct sum of the left regular representation is not sufficient. When the acting group is sofic, and the action is free, factoring onto a Bernoulli shift is equivalent to having a factor with positive entropy with respect to some sofic approximation, by a recent stunning result of Seward \cite{SewardSinai}. So, in the sofic case, a free action does not factor onto a Bernoulli shift is equivalent to having \emph{completely zero entropy} (i.e. every action has zero entropy) with respect to some (equivalent any) sofic approximation.

The main result of this paper gives a large class of examples of algebraic actions which are factors of Bernoulli shifts. When the acting group is amenable, this implies that they \emph{are} Bernoulli shifts, by Ornstein theory. We need some preliminary notions from group theory. A \emph{left-invariant order} on a group $G$ is a total order $<$ on $G$ so that if $x,y\in G$ and $x<y,$ then $gx<gy$ for all $g\in G.$ Given such an order, an element $g\in G$ is \emph{positive} if $g>1.$ A group is \emph{left-orderable} if there is a left-invariant order on $G,$ and a \emph{left-ordered group} is a group equipped with a fixed left-invariant order. We refer the reader to Section \ref{S:applications} for a discussion of many examples of left-orderable groups, both amenable and nonamenable. Finally, if $G$ is finitely generated with finite generating set $S,$ then for an integer $R\geq 1$ we let $B_{R}(S)$ be the ball of radius $R$ centered at the identity in the word metric coming from $S.$ That is, $B_{R}(S)=(S\cup \{1\}\cup S^{-1})^{R}.$
Unlike the usual situation, we will typically be interested in \emph{anti-symmetric} generating sets $S,$ i.e. ones for which $S\cap S^{-1}=\varnothing.$ Lastly, given $f\in \Z(G),$ we let $X_{f}$ be the Pontryagin dual of $\Z(G)/\Z(G)f.$ That is, $X_{f}$ is the space of continuous homomorphisms from $\Z(G)/\Z(G)f$ to $\R/\Z.$   We now present the two main results of the paper.

\begin{thm}\label{T:harmonic model intro}
Let $G$ be a finitely generated, left-ordered group, and assume that $S$ is a set of  positive generators. Suppose $|B_{R}(S)|\geq CR^{d}$ for some constants $C>0,d\geq 5.$ Let $f=m+\sum_{s\in S}a_{s}s\in \Z(G)$ and assume that $a_{s}\ne 0$ for all $s\in S,$ and that $\sum_{s\in S}|a_{s}|=|m|.$ Then $G\actson (X_{f},m_{X_{f}})$ is a factor of a Bernoulli shift. If $G$ is assumed amenable, then $G\actson (X_{f},m_{X_{f}})$ is isomorphic to a Bernoulli shift with entropy $\log(m).$

\end{thm}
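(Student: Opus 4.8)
The plan is to realize $(X_{f},m_{X_{f}})$ as the image of an explicit, $G$-equivariant factor map from a Bernoulli shift, the map being convolution by a \emph{square-summable} fundamental solution of $f$. First I would normalize: replacing $f$ by $-f$ changes neither $X_{f}$ nor the right ideal $\Z(G)f$, so I assume $m>0$ and write $f=m(1-u)$ with $u=-\tfrac1m\sum_{s\in S}a_{s}s$. By hypothesis $\|u\|_{1}=\tfrac1m\sum_{s}|a_{s}|=1$, and since every $s\in S$ is positive, $u$ is supported on $\{g:g>1\}$. Recall $X_{f}$ is the kernel of convolution by $f$ on $\T^{G}$; as $G$ is left-orderable, $\Z(G)$ has no zero divisors, so this operator is surjective with kernel $X_{f}$. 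Now set $w=\tfrac1m\sum_{k\geq0}u^{k}$, the formal two-sided inverse of $f$. The only place the growth hypothesis enters is the claim $w\in\ell^{2}(G)$: applying the Nash--Varopoulos on-diagonal heat-kernel estimate to the symmetrization of $u$, the lower bound $|B_{R}(S)|\geq CR^{d}$ converts into a return-probability decay yielding
\[
\|u^{k}\|_{\ell^{2}(G)}\lesssim k^{-d/4}.
\]
Hence $\sum_{k}\|u^{k}\|_{2}<\infty$ exactly when $d>4$, i.e. $d\geq5$, so the Neumann series converges in $\ell^{2}(G)$ and $f*w=w*f=\delta_{e}$. The same transience estimate shows that $f$ has trivial kernel on $\ell^{2}(G)$, a fact I will need later.

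Next I construct the map. Let $\mu$ be the symmetric geometric law on $\Z$ (mean zero, finite variance, full support, with $\hat\mu>0$). For $\xi\sim\mu^{\otimes G}$, the field $\eta:=\xi*w$ converges almost surely and in $L^{2}$ coordinatewise, precisely because $w\in\ell^{2}(G)$ and $\mu$ has mean zero and finite variance (Kolmogorov). Define $\Phi(\xi)=[\eta]\in\T^{G}$, the coordinatewise reduction mod $\Z$. Since $f*\eta=\xi\in\Z^{G}$, the reduction kills $f*\eta$, so $\Phi(\xi)\in X_{f}$. Moreover $\Phi$ is a Borel group homomorphism $\Z^{G}\to X_{f}$ and is $G$-equivariant because convolution by the fixed element $w$ commutes with the shift. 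Thus $(\Z^{G},\mu^{\otimes G})$ is a Bernoulli shift mapping equivariantly into $X_{f}$, and it remains only to identify the pushforward measure.

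The main work, and the step I expect to be the chief obstacle, is to prove $\Phi_{*}\mu^{\otimes G}=m_{X_{f}}$ (which also forces $\Phi$ onto a.e.). I would do this via Fourier coefficients: for $\zeta\in\Z(G)$ the coefficient of $\Phi_{*}\mu^{\otimes G}$ at the character $\chi_{\zeta}$ equals $\prod_{h}\hat\mu(c_{h})$, where $c=\zeta*\check w\in\ell^{2}(G)$ solves $c*f^{*}=\zeta$. For $\zeta\in\Z(G)f$ (the trivial character) this product is automatically $1$, since $\Phi$ lands in $X_{f}$. For $\zeta\notin\Z(G)f$ I must show the product vanishes, i.e. $\sum_{h}(1-\hat\mu(c_{h}))=\infty$; because $1-\hat\mu(t)\gtrsim\operatorname{dist}(t,\Z)^{2}$, it suffices that $\sum_{h}\operatorname{dist}(c_{h},\Z)^{2}=\infty$. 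The delicate converse is the crux: if $\sum_{h}\operatorname{dist}(c_{h},\Z)^{2}<\infty$, then rounding $c$ to the nearest integer field $n$ (finitely supported, as $c\in\ell^{2}$ forces $c_{h}\to0$) gives $c-n\in\ell^{2}$ with $(c-n)*f^{*}\in\Z(G)$, and I must upgrade this \emph{almost-integer} $\ell^{2}$-relation to the genuine membership $\zeta\in\Z(G)f$, using the triviality of the $\ell^{2}$-kernel of $f$ together with the rigidity of the principal ideal $\Z(G)f$ coming from left-orderability. This arithmetic-versus-$\ell^{2}$ separation is the heart of the proof; granting it, $\Phi$ is a genuine factor map and $X_{f}$ is a factor of a Bernoulli shift.

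Finally, suppose $G$ is amenable. A factor of a Bernoulli shift over an amenable group is Bernoulli by Ornstein--Weiss, so $G\actson(X_{f},m_{X_{f}})$ is Bernoulli. Its entropy equals $\log\det_{L(G)}f$, and $\det_{L(G)}f=m\cdot\det_{L(G)}(1-u)$. Here I claim $\det_{L(G)}(1-u)=1$: since $u$ is supported on $\{g>1\}$ and $\{g>1\}$ is closed under products, $(u^{k})_{e}=0$, hence $\tau(u^{k})=0$ for every $k\geq1$, so $\Re\,\tau\log(1-u)=-\Re\sum_{k\geq1}\tfrac1k\tau(u^{k})=0$. This is the noncommutative analogue, supplied by the left-order, of a polynomial with constant term $1$ and no roots inside the unit disk having Mahler measure $1$; some care is needed at the spectral boundary $\|u\|=1$, which I would handle by the standard $\epsilon$-regularization of the Fuglede--Kadison determinant. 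Consequently the entropy is $\log m$, completing the amenable case.
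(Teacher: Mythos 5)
Your overall architecture matches the paper's: produce an $\ell^{2}$ formal inverse $w$ of $f$ from the Varopoulos return-probability estimate (this is where $d\geq 5$ enters, via $\tau((x^{*}x)^{k})\lesssim k^{-5/2}$ after reducing to the self-adjoint case), push a Bernoulli measure forward through convolution by $w$, and identify the image measure by computing Fourier coefficients. But the step you yourself flag as the crux is not merely unproven --- as you have set it up, it is false. You choose $\nu$ with $\widehat{\nu}>0$ everywhere and therefore need $\sum_{h}\operatorname{dist}(c_{h},\Z)^{2}=\infty$ for every $\zeta\notin\Z(G)f$. Take $G=\Z$, $f=3+x+2x^{2}$ (lopsided, all hypotheses satisfied) and $\zeta=1=\delta_{0}$. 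Then $c=\zeta\ast\check w=\check w\in\ell^{1}$, so $\sum_{h}\operatorname{dist}(c_{h},\Z)^{2}\leq\|c\|_{2}^{2}<\infty$, while $\delta_{0}\notin\Z(G)f$ since $f$ is not a unit in $\Z(G)$. Hence $\prod_{h}\widehat{\mu}(c_{h})>0$ and the pushforward is \emph{not} Haar measure for your choice of $\nu$. Your proposed repair by rounding is also vacuous: from $\sum_{h}\operatorname{dist}(c_{h},\Z)^{2}<\infty$ you obtain $\beta:=\zeta-nf\in\Z(G)$ with $\beta=(c-n)f$ and $c-n\in\ell^{2}$, but \emph{every} $\beta\in\Z(G)$ factors as $(\beta w)f$ with $\beta w\in\ell^{2}$, so no contradiction with $\zeta\notin\Z(G)f$ can be extracted.

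The paper avoids this entirely by making the product vanish through a \emph{single} factor. One chooses $\nu$ so that $\widehat{\nu}$ vanishes identically on $\frac{1}{m}\Z\cap\Z^{c}$ (namely $u_{\{-k,\dots,k\}}$ when $m=2k+1$ is odd, and $u_{\{0,\dots,m-1\}}^{*}\ast u_{\{0,\dots,m-1\}}$ when $m$ is even). Then Lemma \ref{L:coefficents arent big} reduces $\alpha$ modulo $\Z(G)f$ so that its coefficients lie in $\{-(m-1),\dots,m-1\}$ (or the slightly more delicate range $\{-m,\dots,m-1\}$ in the well-balanced case), and Lemma \ref{L:getting divisibility} uses the left order in an essential way: at a $\preceq$-minimal element $g_{0}$ of $\supp(\widehat{\alpha})$, every term $(\alpha x^{n})(g_{0})$ with $n\geq 1$ of the Neumann series vanishes because $S$ is positive, so $(\alpha\xi)(g_{0})=\widehat{\alpha}(g_{0})/m\in\frac{1}{m}\Z\cap\Z^{c}$, and that one factor kills the product. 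This is the only place the orderability is used, and it is indispensable; your proposal uses the order only to kill $\tau(u^{k})$ in the determinant computation (which part, like your Ornstein--Weiss conclusion in the amenable case, is correct and agrees with Corollary \ref{C:FKD Calculation}).
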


If in the above $m>0$ and $a_{s}<0$ for all $s\in S,$ then $G\actson (X_{f},m_{X_{f}})$ is called a \emph{harmonic model}. This is because we can write $f=m(1-x)$ where $x=\sum_{s\in S}\mu(s)s$ for some $\mu\in \Prob(G),$ and $X_{f}$ is in some sense the space of ``$\T$-valued $\mu^{*}$-harmonic functions" (here $\mu^{*}(g)=\mu(g^{-1})$), see \cite[Section 1]{BowenLi} for more details. The ergodic theory of the harmonic model was previously studied by several authors, see e.g.  \cite{BowenLi, KitchensSchmidt, LindSchmidt1, LindSchmidt2, Sandpiles}. The proof of Theorem \ref{T:harmonic model intro} uses our results in \cite{MeMaxMinWC}, which allow one to measurably extend the convolution map $\{-n,\cdots,n\}^{G}\to \R^{G}$ from convolving with $\ell^{1}$-vectors to convolving with $\ell^{2}$-vectors. As explained in \cite[Proposition 3.8]{MeMaxMinWC}, there is no canonical way to measurably extend the convolution map $\{-n,\cdots,n\}^{G}\to \R^{G}$ to case of convolving with $\ell^{p}$-vectors, with $p>2.$ Because of this, one cannot use the same techniques we used to prove Theorem \ref{T:harmonic model intro} to weaken the growth assumption on $G$ (see the remarks following Corollary \ref{C:well balanced superpoly growth inverses} for more information).

If  $f\in \C(G)$ and $f=m+\sum_{s\in S}a_{s}s$ and $\sum_{s\in S}|a_{s}|<|m|,$ then $f$ is called \emph{lopsided}. In this case, we can drop the assumption on the growth rate of $G,$ and only require orderability.

\begin{thm}\label{T:lopsided intro}
Let $G$ be a finitely generated, left-ordered group, and assume that $S$ is a set of positive generators, and that $|S|\geq 2.$ Let $f=m+\sum_{s\in S}a_{s}s\in\Z(G)$ and assume that $\sum_{s\in S}|a_{s}|<|m|.$ Then $G\actson (X_{f},m_{X_{f}})$ is a factor of a Bernoulli shift. If $G$ is assumed amenable, then $G\actson (X_{f},m_{X_{f}})$ is isomorphic to a Bernoulli shift with entropy $\log(m).$
\end{thm}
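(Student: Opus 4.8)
The plan is to exhibit an explicit Borel, $G$-equivariant, measure-preserving factor map onto $(X_f,m_{X_f})$ from the Bernoulli shift $(\T^G,m_{\T^G})$ with base $(\T,\mathrm{Haar})$; the amenable refinement then follows from Ornstein theory together with an entropy computation. First I would reduce to $m>0$ (replacing $f$ by $-f$ leaves $X_f$ unchanged). The lopsided hypothesis $\sum_{s\in S}|a_s|<m$ lets us write $f=m(1-c)$ with $c=-\tfrac1m\sum_{s\in S}a_s s$ and $\|c\|_{\ell^1(G)}<1$, so $f$ is invertible in $\ell^1(G)$ and $f^{-1}=\tfrac1m\sum_{n\ge0}c^n\in\ell^1(G)$. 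Unwinding Pontryagin duality, $X_f$ is the kernel of the continuous shift-equivariant group homomorphism $\rho_f\colon\T^G\to\T^G$ given by $(\rho_f x)(h)=m\,x(h)+\sum_{s\in S}a_s\,x(hs)$.

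The heart of the argument is a Borel equivariant section of $\rho_f$. Let $\iota\colon\T^G\to[0,1)^G\subseteq\R^G$ be the coordinatewise fractional-part lift; it is Borel and $G$-equivariant. Because $f^{-1}\in\ell^1(G)$, the operator $(T_{f^{-1}}\phi)(h)=\sum_{u\in G}f^{-1}(u)\phi(hu)$ is a bounded $G$-equivariant operator on $\ell^\infty(G,\R)$ that is a two-sided inverse to $(T_f\phi)(h)=m\phi(h)+\sum_{s}a_s\phi(hs)$, and $T_f$ descends mod $\Z^G$ to $\rho_f$. Hence $\sigma:=(T_{f^{-1}}\circ\iota)\bmod\Z^G$ is Borel, $G$-equivariant, and satisfies $\rho_f\circ\sigma=\mathrm{id}$; in particular $\rho_f$ is onto. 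I would then set $P(z)=z-\sigma(\rho_f z)$, which is Borel, $G$-equivariant, and takes values in $X_f=\Ker\rho_f$. Since $\rho_f$ is a surjective homomorphism of compact groups, $(\rho_f)_*m_{\T^G}=m_{\T^G}$ and the disintegration of $m_{\T^G}$ over $\rho_f$ assigns to the fiber $\rho_f^{-1}(w)=\sigma(w)+X_f$ the translate $\sigma(w)+m_{X_f}$ of the Haar probability measure $m_{X_f}$ on $X_f$. On that fiber $P(z)=z-\sigma(w)$, which pushes $\sigma(w)+m_{X_f}$ forward to $m_{X_f}$; integrating over $w$ gives $P_*m_{\T^G}=m_{X_f}$. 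As $P$ restricts to the identity on $X_f$ (note $\sigma(0)=0$), it is onto, so $P$ is a factor map and $(X_f,m_{X_f})$ is a factor of the Bernoulli shift $(\T^G,m_{\T^G})$. This step uses only $f^{-1}\in\ell^1(G)$, not orderability.

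For the amenable case, $(X_f,m_{X_f})$ is then Bernoulli by Ornstein--Weiss \cite{OrnWeiss}, so it remains to compute its entropy. By the formula for the entropy of a principal algebraic action of an amenable group, this entropy equals $\log\det_{\cL(G)}f$, and $\det_{\cL(G)}f=m\cdot\det_{\cL(G)}(1-c)=m\exp\big(\tau(\log(1-c))\big)$ with $\tau(\log(1-c))=-\sum_{n\ge1}\tfrac1n\tau(c^n)$. Here orderability finally enters: $c$ is supported on the positive set $S$, and in a left-ordered group any product of positive elements is positive, so $\supp(c^n)\subseteq G_{>1}$ avoids the identity and $\tau(c^n)=c^n(e)=0$ for every $n\ge1$. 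Thus $\det_{\cL(G)}(1-c)=1$, the entropy is $\log m$, and $(X_f,m_{X_f})$ is the Bernoulli shift of entropy $\log m$. I expect the main obstacle to be the measure-preservation of $P$: everything hinges on identifying the conditional measures of $m_{\T^G}$ along $\rho_f$ as translates of $m_{X_f}$, which rests on $\rho_f$ being a genuine surjective homomorphism of compact groups, and this is exactly what the $\ell^1$-invertibility of $f$ buys us. The order is used only in the entropy computation, and the hypothesis $|S|\ge2$ serves only to keep the resulting example nondegenerate.
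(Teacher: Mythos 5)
Your argument is correct in substance, but it takes a genuinely different route from the paper's, so it is worth comparing the two. The paper realizes $(X_{f},m_{X_{f}})$ as a factor of a \emph{finite-alphabet} Bernoulli shift: it takes the $\ell^{1}$ formal inverse $\xi$ of $f$ (available exactly because $f$ is lopsided), pushes forward $\nu^{\otimes G}$ for a suitable finitely supported $\nu\in\Prob(\Z)$ (e.g.\ $u_{\{-k,\dots,k\}}$ when $m=2k+1$) under the convolution map $\Theta_{\xi}$, and identifies the image measure with $m_{X_{f}}$ by computing its Fourier transform $\widehat{\mu}_{\xi}(\alpha)=\prod_{g}\widehat{\nu}((\alpha\xi)(g))$. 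There the left order is essential to the factor statement itself: Lemma \ref{L:getting divisibility} uses a $\preceq$-minimal element of $\supp(\widehat{\alpha})$ to produce a coordinate $g_{0}$ with $(\alpha\xi)(g_{0})\in\frac{1}{m}\Z\cap\Z^{c}$, killing the product for $\alpha\notin\Z(G)f$. You instead start from the infinite-entropy Bernoulli shift $(\T^{G},m_{\T}^{\otimes G})$, use the $\ell^{1}$ inverse to build a Borel equivariant section $\sigma$ of the surjective homomorphism $\rho_{f}$ with kernel $X_{f}$, and conclude by disintegrating Haar measure over the cosets of the kernel; the order enters only in the Fuglede--Kadison determinant computation (as in Corollary \ref{C:FKD Calculation}). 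Your route is softer and makes transparent that, in the lopsided case, orderability is irrelevant to the qualitative ``factor of Bernoulli'' conclusion. What it gives up is quantitative: your factor map comes from a Bernoulli shift of infinite entropy, whereas the paper's map is from a Bernoulli shift of \emph{equal} entropy $\log(m)$ (Corollary \ref{C:equal entropy factor map}), which is the whole point of Conjecture \ref{con:injectivity conjecture} that the map be an isomorphism. Moreover, your section $T_{f^{-1}}\circ\iota$ needs $f^{-1}\in\ell^{1}(G)$ to act pointwise on $[0,1)^{G}$, so the argument does not extend to the semi-lopsided/harmonic-model setting of Theorem \ref{T:harmonic model intro}, where only an $\ell^{2}$ formal inverse exists and the paper's $\Theta_{\xi}$ machinery is unavoidable.

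One small gap in the amenable step: Ornstein--Weiss applies to (essentially) free actions, so before concluding Bernoullicity you must note that $G\actson(X_{f},m_{X_{f}})$ is essentially free. This is supplied as in Proposition \ref{P:free for free}: the $c_{0}$ (indeed $\ell^{1}$) formal inverse makes the action mixing, so the kernel of the action is finite, and it is then trivial because a left-ordered group is torsion-free. With that added, your entropy computation (injectivity of $\lambda(f)$ from $\ell^{1}$-invertibility, the Li--Thom formula, and $\tau(c^{n})=0$ from positivity of $S$) matches the paper's and completes the proof.
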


We remark here that there are \emph{many} examples of left-orderable groups including: torsion-free nilpotent groups, polycyclic groups, certain groups of intermediate growth, Thompson's group, free groups, certain mapping class groups etc. See Section \ref{S:applications} for detailed examples with references.

In each of Theorem \ref{T:harmonic model intro}, \ref{T:lopsided intro}, if $G$  is amenable, then the reason we know that $G\actson (X_{f},m_{X_{f}})$ is a Bernoulli shift by Ornstein theory.  We know \emph{which} Bernoulli shift it is by of the results of \cite{Den, DenSchmidt, LiThom}, and the fact that we can directly compute the Fuglede-Kadison determinant (see Appendix \ref{S:FKD}) in this case. If $G$ is not assumed amenable, then by \cite{Me5} (see also \cite{BowenLi} in the harmonic model case, and \cite{BowenEntropy, KLi} in the expansive case), and Appendix \ref{S:FKD}, we know that the entropy of $G\actson (X_{f},m_{X_{f}})$ has entropy $\log(m).$ Unfortunately, Ornstein theory is not developed in the nonamenable case, so we do not know if $G\actson (X_{f},m_{X_{f}})$ is isomorphic to a  Bernoulli shift. However, if $m$ is odd, the factor map we use to show that $G\actson (X_{f},m_{X_{f}})$ is a factor of a Bernoulli is a map between spaces of equal entropy, and we suspect that it is injective modulo null sets. This is known in certain examples when $G$ is the free group by work of Lind-Schmidt \cite{LindSchmidtFreeBern}.

We mention that  in Theorems \ref{T:harmonic model intro} and \ref{T:lopsided intro} we do not actually need the group to be totally ordered. A \emph{left-invariant partial order} on $G$ is a partial order $\preceq$ so that if $x,y\in G$ and $x\preceq y,$ then $gx\preceq gy$ for all $g\in G.$ If we set $P=\{x\in G:x\succ 1\},$ then:
\begin{itemize}
\item $x,y\in P$ implies $xy\in P,$
\item $P\cap P^{-1}=\varnothing.$
\end{itemize}
Equivalently, $P$ is a subsemigroup of $G$ with $1\notin P$ (we remark that left-invariant partial orders on groups also appeared in \cite{AlpMeyRyu} but for different reasons).
A subset $P$ of $G$ satisfying the above two axioms is called a \emph{positive semigroup}. If we are given a positive semigroup, then we can define a left-invariant partial order on $G$ by $x\preceq y$ if $x^{-1}y\in P.$ So positive semigroups correspond to left-invariant partial orders on $G.$ We can extend Theorems \ref{T:harmonic model intro} and \ref{T:lopsided intro} to groups $G$ so that there is a positive semigroup $P$ with $\ip{P}=G,$ see Section \ref{S:orderability}. Such groups cannot be torsion, but we also have examples of such groups which are not torsion-free. See Section \ref{S:applications} for a discussion of examples.

We finish by discussing the organization of the paper. In Section \ref{S:background} we discuss some background results for the paper. These involve the technique we used in \cite{MeMaxMinWC} to measurably extend the convolution map $\{-n,\cdots,n\}^{G}\to \R^{G}$ from convolving with $\ell^{1}$-vectors to convolving with $\ell^{2}$-vectors. We state the main results on this construction obtained in \cite{MeMaxMinWC}, which are the main tool we will use to get factor maps from Bernoulli shifts. In Section \ref{S:growth} we explain how the growth rate assumption on $G$ shows up, this is related to decay rates of return time probability of random walks on $G.$ In Section \ref{S:orderability}, we explain why the orderability assumption on  $G$ is relevant. We also prove the two main results of the paper in this section. In Section \ref{S:applications}, we give many examples of actions we can prove are factors of Bernoulli shifts using our work. We split this into the amenable case and the nonamenable case, since in the amenable case we get that they are isomorphic to Bernoulli shifts as a consequence of Ornstein theory. In section \ref{S:closing} we give some closing remarks, as well as state some conjectures related to our work. In particular, we strongly suspect that the factor maps we produce in the nonamenable case are often isomorphisms. Appendix \ref{S:Tracial vNa} gives some background results on tracial von Neumann algebras we will use in the paper. In particular, in Section \ref{S:growth} we require a few background lemmas whose proof we give in Appendix \ref{S:nclp},\ref{S:general L2 inverses}. Appendix \ref{S:general L2 inverses} contains general results on $L^{2}$ formal inverses which may be of independent interest. We will need to compute the entropy of the algebraic actions in question using the results of \cite{LiThom,Me5}. This requires computing some Fuglede-Kadison determinants, which we do in Appendix \ref{S:FKD}. Lastly, the reader may be more familiar with arguments involving lopsided elements and $\ell^{1}$ inverses, or even inverses in the group von Neumann algebra, as opposed to $\ell^{2}$ formal inverses. We discuss the difference between these notions in Appendix \ref{S:inverses}.

\textbf{Acknowledgments.}
I thank Doug Lind for interesting discussions related to this work.
I thank Yago Antol\'{i}n, Thomas Koberda, and Yash Lodha and  for useful discussions related to left-orderable groups. I thank Lewis Bowen and Klaus Schmidt for their  comments on an earlier version of the paper.

\subsection{Conventions and Notation}
If $(X,\mu)$ is a measure space and $K$ is a compact Hausdorff space, we let  $\Meas(X,K)$ be the space of all measurable maps $X\to K,$ where two maps are identified if they agree almost everywhere. We give $\Meas(X,K)$ the topology of convergence in measure: so a basic  neighborhood of $\Theta\in \Meas(X,K)$ is given by
\[U_{V,\varepsilon}(\Theta)=\{\Psi\in \Meas(X,K):\mu(\{x:(\Psi(x),\Theta(x))\in V\})>1-\varepsilon\},\]
where $V$ is a neighborhood of the diagonal in $X\times X$ and $\varpesilon>0.$ We often call this topology the \emph{measure topology}. If $G$ is a countable, discrete group and $G\actson (X,\mu)$ is probability measure-preserving, and $G\actson K$ by homeomorphisms, we let $\Meas_{G}(X,K)$ be the set of (almost surely) $G$-equivariant elements of $\Meas(X,K).$ If $A$ is a set, we let $G\actson A^{G}$ be given by $(gx)(h)=x(g^{-1}h),$ for $x\in A^{G},g,h\in G.$
If $A$ is a compact, Hausdorff space, then so is $A^{G}$ and this action is by homeomorphisms. If $E$ is a finite set, then we equip $E$ with the uniform probability measure $u_{E}.$ If $Y$ is a locally compact, Hausdorff space, we let $\Prob(Y)$ be the  space of all Radon probability measures on $Y.$

If $G$ is a countable, discrete group we let $\C(G)$ denote its complex group ring. Recall that this is the ring of all formal sums $\sum_{g}a_{g}g$ where $a_{g}\in \C$ and all but finitely many of the $a_{g}$ are $0.$ We let $c_{c}(G)$ be all finitely supported functions $f\colon G\to \C,$ and $c_{0}(G)$ all functions $f\colon G\to \C$ so that $\{g:|f(g)|>\varepsilon\}$ is finite for every $\varpesilon>0.$ We define $\tau\colon \C(G)\to \C$ by $\tau(\sum_{g}a_{g}g)=a_{1}.$ Given $\alpha\in \C(G),$ we define $\widehat{\alpha}\in c_{c}(G)$ by
\[\widehat{\alpha}(g)=\tau(g^{-1}\alpha).\]
We will adopt obvious notation such as $c_{c}(G,\R),c_{0}(G,\R),\ell^{p}(G,\R)$ to denote $c_{c}(G)\cap \R^{G},c_{0}(G)\cap \R^{G},\ell^{p}(G)\cap\R^{G}$ etc. Similar remarks apply to $\R(G),\Q(G),\Z(G)$ etc.
For $\alpha\in \C(G),$ let $\|\alpha\|_{2}$ be given by $\|\alpha\|_{2}=\tau(\alpha^{*}\alpha).$
For $\alpha=\sum_{g}\alpha_{g}g\in \C(G),$ we let
\[\alpha^{*}=\sum_{g}\overline{\alpha_{g^{-1}}}g.\]
Given $\xi\in \C^{G},\alpha\in \C(G),$ we define $\alpha\xi\in \C^{G}$ by
\[(\alpha\xi)(g)=\sum_{h}\widehat{\alpha}(h)\xi(h^{-1}g).\]
Similarly, we define $\xi\alpha$ by
\[(\xi\alpha)(g)=\sum_{h}\xi(gh^{-1})\widehat{\alpha}(h).\]
Related to the above, we introduce the following notation. If $f\in \C(G),$ we let $\lambda(f)\colon \ell^{2}(G)\to \ell^{2}(G)$ be defined by
\[\lambda(f)\xi=f\xi.\]
If $\xi\in \C^{G},$ we let $\supp(\xi)=\{g\in G:\xi(g)\ne 0\}.$

Let $A$ be a locally compact, abelian group. We let $\widehat{A}$ be the set of continuous homomorphisms $\chi\colon A\to \T,$ where $\T=\R/\Z.$   For $\mu\in \Prob(A),$ we define the \emph{Fourier transform of $\mu,$} $\widehat{\mu}\colon \widehat{A}\to \C$ by
\[\widehat{\mu}(\chi)=\int_{A}\exp(2\pi i \chi(x))\,d\mu(x).\]
If $G$ is a countable, discrete group we identify $(\T^{G})^{\widehat{}}$ with $\Z(G)$ under the pairing
\[\ip{\theta,\alpha}_{\T}=\sum_{g\in G}\alpha_{g}\theta(g),\mbox{ for $\theta\in \T^{G},\alpha=\sum_{g\in G}\alpha_{g}g\in \Z(G).$}\]
For $f\in \C^{G},$ we let $f^{*}\in \C^{G}$ be given by $(f^{*})(g)=\overline{f(g^{-1})}.$

 \section{Background results}\label{S:background}
In \cite[Section 3]{MeMaxMinWC}, we defined a way  to ``measurably" extend the map $\R^{G}\to \T^{G}$ given by convolution by a finitely supported vector to the case of convolving by an $\ell^{2}$-vector. We restate the results here for the convenience of the reader.

\begin{thm}\label{T: background convolution}
Let $G$ be a countable, discrete group. Fix a $\nu\in \Prob(\R)$ with mean zero and finite second moment. There is a unique map $\ell^{2}(G,\R)\to \Meas(\R^{G},\nu^{\otimes G}, \T^{G}),$  $\xi\mapsto \Theta_{\xi}$ so that:
\begin{enumerate}[(i)]
\item $\Theta_{\xi}(x)(g)=(x\xi^{*})(g)+\Z$ \mbox{ for all $\xi\in c_{c}(G,\R),$ and all $x\in \R^{G}$, $g\in G.$}
\item $\xi\mapsto \Theta_{\xi}$ is continuous if we give $\ell^{2}(G,\R)$ the $\|\cdot\|_{2}$-topology and $\Meas(\Z^{G},\nu^{\otimes G}, \T^{G})$ the topology of convergence in measure.
\end{enumerate}
Moreover, if we set $\mu_{\xi}=(\Theta_{\xi})_{*}(\nu^{\otimes G}),$ then
\[\widehat{\mu}_{\xi}(\alpha)=\prod_{g\in G}\widehat{\nu}((\alpha\xi)(g))\]
with the product on the right hand side converging absolutely.
\end{thm}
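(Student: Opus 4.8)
The plan is to build the map one coordinate at a time, using an $L^{2}$-isometry to pass from finitely supported vectors to $\ell^{2}$ vectors, and then to descend to $\T^{G}$ via the (Lipschitz) quotient $\R\to\T$. First I would isolate the ``value at the identity.'' For $\zeta\in c_{c}(G,\R)$ define $\Phi_{\zeta}\colon\R^{G}\to\R$ by $\Phi_{\zeta}(x)=(x\zeta^{*})(1)$. Unwinding the definitions of $\widehat{\cdot}$ and of the product $x\zeta^{*}$ gives the two identities $\Phi_{\zeta}(x)=\sum_{h\in G}x(h)\zeta(h)$ and $(x\zeta^{*})(g)=\Phi_{\zeta}(g^{-1}x)$, where $g^{-1}x$ is the shift. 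Under $\nu^{\otimes G}$ the coordinates $x(h)$ are i.i.d.\ with mean zero and variance $\sigma^{2}=\int_{\R}t^{2}\,d\nu(t)<\infty$, so $\Phi_{\zeta}$ is an $L^{2}$-convergent sum of independent mean-zero random variables and
\[\|\Phi_{\zeta}\|_{L^{2}(\nu^{\otimes G})}^{2}=\sigma^{2}\|\zeta\|_{2}^{2},\qquad \int_{\R^{G}}\Phi_{\zeta}\,d\nu^{\otimes G}=0.\]
Hence $\zeta\mapsto\Phi_{\zeta}$ is (a constant multiple of) an isometry and extends uniquely to a bounded linear map $\ell^{2}(G,\R)\to L^{2}(\R^{G},\nu^{\otimes G})$, which I still denote $\xi\mapsto\Phi_{\xi}$.

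Next I would define $\Theta_{\xi}(x)(g)=\Phi_{\xi}(g^{-1}x)+\Z$. Since $G$ is countable and $\nu^{\otimes G}$ is shift-invariant, the right-hand side is defined for almost every $x$ simultaneously in all $g\in G$, so $\Theta_{\xi}$ is a well-defined element of $\Meas(\R^{G},\nu^{\otimes G},\T^{G})$. Property (i) is immediate: for $\xi\in c_{c}(G,\R)$ the defining sum is finite and $\Phi_{\xi}(g^{-1}x)=(x\xi^{*})(g)$. For (ii), if $\xi_{n}\to\xi$ in $\|\cdot\|_{2}$ then $\Phi_{\xi_{n}}\to\Phi_{\xi}$ in $L^{2}$, hence in measure; because the quotient $\R\to\T$ is $1$-Lipschitz, each coordinate of $\Theta_{\xi_{n}}$ converges in measure to the corresponding coordinate of $\Theta_{\xi}$, and since a basic neighborhood of the diagonal in $\T^{G}\times\T^{G}$ constrains only finitely many coordinates, a union bound upgrades this to convergence in the measure topology. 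Uniqueness follows because $c_{c}(G,\R)$ is dense in $\ell^{2}(G,\R)$ and the measure topology is Hausdorff, so two continuous maps agreeing on $c_{c}(G,\R)$ agree everywhere.

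Finally, for the Fourier formula, fix $\alpha\in\Z(G)=\widehat{\T^{G}}$. Computing the pairing coordinatewise gives $\ip{\Theta_{\xi}(x),\alpha}_{\T}=\sum_{g}\alpha_{g}\Phi_{\xi}(g^{-1}x)$, and reindexing this finite combination identifies it with $\Phi_{\alpha\xi}(x)+\Z$, where $\Phi_{\alpha\xi}(x)=\sum_{k}x(k)(\alpha\xi)(k)$ (verify on $c_{c}(G,\R)$ and extend by $\ell^{2}$-continuity, using that $\xi\mapsto\alpha\xi$ is bounded on $\ell^{2}$). Therefore
\[\widehat{\mu}_{\xi}(\alpha)=\int_{\R^{G}}\exp(2\pi i\,\Phi_{\alpha\xi}(x))\,d\nu^{\otimes G}(x)=\prod_{k\in G}\int_{\R}\exp(2\pi i\,t\,(\alpha\xi)(k))\,d\nu(t)=\prod_{g\in G}\widehat{\nu}((\alpha\xi)(g)),\]
the middle equality being the factorization of the characteristic function of an $L^{2}$-convergent sum of independent random variables. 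Absolute convergence of the product I would obtain from the elementary bound $|1-\widehat{\nu}(s)|\le K s^{2}$ (valid since $\nu$ has mean zero and finite second moment) together with $\alpha\xi\in\ell^{2}(G)$, which yields $\sum_{g}|1-\widehat{\nu}((\alpha\xi)(g))|\le K\|\alpha\xi\|_{2}^{2}<\infty$.

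The routine parts are the isometry and the density/continuity bookkeeping. The main obstacle I expect is the passage from finitely supported to $\ell^{2}$ vectors: one must check that the mod-$\Z$ reduction really descends the $L^{2}$-limit to a genuine a.e.-defined measurable $\T^{G}$-valued map (handled by countability of $G$), that coordinatewise convergence in measure implies convergence in the measure topology on $\Meas(\R^{G},\nu^{\otimes G},\T^{G})$, and that the characteristic-function factorization persists for the infinite sum. This last point is exactly where the uniform quadratic estimate $|1-\widehat{\nu}(s)|\le Ks^{2}$ and the square-summability of $\alpha\xi$ are essential.
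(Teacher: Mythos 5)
Your proposal is correct, and it is essentially the argument behind the cited result: the paper itself gives no proof of Theorem \ref{T: background convolution}, merely restating it from \cite[Section 3]{MeMaxMinWC}, where the construction is likewise the extension of $\zeta\mapsto(x\mapsto (x\zeta^{*})(1))$ from $c_{c}(G,\R)$ to $\ell^{2}(G,\R)$ via the $L^{2}$-isometry $\|\Phi_{\zeta}\|_{L^{2}(\nu^{\otimes G})}=\sigma\|\zeta\|_{2}$ (this is exactly the uniform-continuity step the paper alludes to when discussing the $\ell^{1}$ variant), followed by the characteristic-function factorization with the bound $|1-\widehat{\nu}(s)|\leq Ks^{2}$. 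All the delicate points you flag --- the simultaneous a.e.\ definition over countably many $g$, the reduction of the measure topology to finitely many coordinates, and the absolute convergence of the product from square-summability of $\alpha\xi$ --- are handled correctly.
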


We refer the reader to \cite[Proposition 3.8]{MeMaxMinWC} for a discussion of the fact that $2$ is \emph{optimal} in the above theorem. Namely, if $p>2,$ then as long as $\nu$ is not the point mass at $0,$  there does not exist a map $\ell^{p}(G,\R)\to \Meas(\R^{G},\nu^{\otimes G},\T^{G})$ which is continuous if we give $\ell^{p}(G,\R)$ the $\|\cdot\|_{p}$-topology and $\Meas(\R^{G},\nu^{\otimes G},\T^{G})$ the topology of convergence in measure, and which agrees with $x\mapsto (g\mapsto (x*\xi)(g)+\Z))$ when $\xi\in c_{c}(G,\R).$

As we mention later (see Section \ref{S:closing}), Theorem \ref{T: background convolution} is significantly easier when $\xi\in \ell^{1}(G,\R),$ and in that case we do not need to assume that $\nu$ has mean zero. However, in order to apply this to the context of $G\actson (X_{f},m_{X_{f}})$ for $f\in \Z(G),$ this would force $f$ to be invertible in the convolution algebra $\ell^{1}(G).$ As we discuss in Appendix \ref{S:inverses}, the $\ell^{1}$ version of Theorem \ref{T: background convolution} is insufficient for our purposes. The reader may also be familiar that in previous works (see \cite{Li, Me7, Me13} for example) one assumed that $f$ is invertible in the group von Neumann algebra. We remark in Appendix \ref{S:inverses} that this would force us to have the acting group be nonamenable, in general. Since we do \emph{not} want to restrict ourselves to the nonamenable case, and want to study both the amenable and nonamenable setting, we want to work with the $\ell^{2}$  version of Theorem \ref{T: background convolution}.

\begin{defn}
Let $G$ be a countable, discrete group, and $f=\sum_{g\in G}f_{g}g\in \C(G).$ We say $f$ is \emph{semi-lopsided} if
\begin{itemize}
    \item $f_{1}> 0$
    \item $\sum_{g\in G:g\ne 1}|f_{g}|\leq f_{1}.$
\end{itemize}
We say $f$ is \emph{lopsided} if $f_{1}>\sum_{g\in G\setminus \{1\}}|f_{g}|.$
We say that $f$ is \emph{well-balanced} if $f_{g}\leq 0$ for all $g\in G\setminus \{1\},$  $f_{1}>0,$ and $\sum_{g\in G}f_{g}=0.$
\end{defn}
Some authors use \emph{lopsided} to mean $|f_{1}|>\sum_{g\in G\setminus\{1\}}|f_{g}|.$ However, we are primarily interested in the case $f\in\Z(G)$ and the corresponding action $G\actson (X_{f},m_{X_{f}}).$  Since $X_{f}=X_{-f}$ we may take $f_{1}> 0$ without loss of generality.

We may regard $\Prob(G)$ as all $\mu\in \ell^{1}(G)$ so that $\mu(g)\geq 0$ for all $g$ and so that $\|\mu\|_{1}=1.$
An equivalent way to say that $f$ is well-balanced is that it can be written as $f=m(1-p)$ where $m\in\Z,$ and $\widehat{p}\in \Prob(G).$
We will apply Theorem \ref{T: background convolution} to show that, in many cases, a semi-lopsided element $f\in\Z(G)$ gives rise to  an algebraic action $G\actson (X_{f},m_{X_{f}})$ which is a factor of a Bernoulli shift. In order to do this, we need find an element $\xi\in \ell^{2}(G,\R)$ to apply Theorem \ref{T: background convolution} to. For this, we will need a generalized notion of invertibility.

\begin{defn}
Let $f\in \Z(G),$ we say that $\alpha\in \R^{G}$ is a \emph{formal right  inverse} of $f$ if $\alpha f=\delta_{1}.$ If $\alpha\in c_{0}(G),$ we will say that $\alpha$ is a \emph{$c_{0}$  formal right inverse of $f.$} If $\alpha\in \ell^{p}(G),$ we will say that $\alpha$ is an \emph{$\ell^{p}$ formal right  inverse}.
\end{defn}
In the above definition, it can be shown that if $p=2,$ and $\alpha$ is an \emph{$\ell^{2}$ formal right  inverse of $f$}, then $f\alpha=\delta_{1}$ (see e.g. \cite[Proposition 2.2]{MeWE}). Thus, if $1\leq p\leq 2$ will simply say that $f$ has an $\ell^{p}$ formal inverse.  The notion of an $\ell^{2}$ formal inverse will be the main way in which we obtain $\ell^{2}$ vectors to apply Theorem \ref{T: background convolution} to. As we mentioned before, the case $p=2$ is optimal in Theorem \ref{T: background convolution}, so we cannot apply our methods to $f\in\Z(G)$ if we only assume that $f$ has an $\ell^{p}$ formal inverse for $p>2,$ or which has a $c_{0}$ formal inverse. One reason why $\ell^{2}$ formal inverses are helpful is the following Corollary of Theorem \ref{T: background convolution}. This corollary is shown explicitly in \cite[Section 3]{MeMaxMinWC}, but we state it here for convenience.

\begin{cor}
Let $G$ be a countable, discrete group fix a $\nu\in \Prob(\R)$ with mean zero and finite second moment. For $\xi\in \ell^{2}(G,\R),$ let $\Theta_{\xi}$ be defined as in Theorem \ref{T: background convolution} for this $\nu.$ Suppose that $f\in \Z(G)$ has an $\ell^{2}$ formal inverse $\xi.$ Then $(\Theta_{\xi})_{*}(\nu^{\otimes G})$ is supported on $X_{f}.$

\end{cor}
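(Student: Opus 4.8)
The plan is to identify the measure $(\Theta_\xi)_*(\nu^{\otimes G})$ as a measure on $\T^G$ and show that it is annihilated by all the characters of $\T^G$ that cut out the subgroup $X_f$. Recall that $X_f$ is the Pontryagin dual of $\Z(G)/\Z(G)f$; concretely, $X_f$ sits inside $\T^G = \widehat{\Z(G)}$ as the annihilator of the subgroup $\Z(G)f$ under the pairing $\ip{\theta,\alpha}_\T = \sum_g \alpha_g \theta(g)$. Thus $\theta \in X_f$ if and only if $\ip{\theta, \beta f}_\T = 0$ for every $\beta \in \Z(G)$, i.e. the character $\beta f$ of $\T^G$ kills $\theta$. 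To show the pushforward measure is supported on $X_f$, it suffices to show that for every $\beta \in \Z(G)$ the Fourier coefficient $\widehat{\mu_\xi}(\beta f)$ equals $1$, since a Radon probability measure on the compact group $\T^G$ is supported on the annihilator of a set of characters exactly when its Fourier transform is $1$ on that set.

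First I would apply the ``Moreover'' clause of Theorem \ref{T: background convolution}, which gives the explicit formula
\[
\widehat{\mu}_{\xi}(\alpha)=\prod_{g\in G}\widehat{\nu}((\alpha\xi)(g)).
\]
I would specialize to $\alpha = \beta f$ for an arbitrary $\beta \in \Z(G)$. The crux is then to compute the convolution $(\beta f)\xi$. Here I would use the hypothesis that $\xi$ is an $\ell^2$ formal inverse of $f$: by the definition, $\xi f = \delta_1$, and by the remark following the definition (citing \cite[Proposition 2.2]{MeWE}) we also have $f\xi = \delta_1$ when $p=2$. Using associativity of the module action, $(\beta f)\xi = \beta(f\xi) = \beta \delta_1 = \beta$. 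The one subtlety requiring care is that $\beta \in \Z(G)$ is finitely supported while $\xi$ is only in $\ell^2$, so I must check that the relevant convolutions converge and that associativity holds in this mixed $\ell^2$-against-$\C(G)$ setting; this is routine because convolving a finitely supported element against an $\ell^2$ element stays in $\ell^2$, and $f\xi = \delta_1$ is an identity of $\ell^2$ vectors that can be hit on the left by the finitely supported $\beta$.

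With $(\beta f)\xi = \beta$ in hand, the Fourier coefficient becomes
\[
\widehat{\mu}_{\xi}(\beta f)=\prod_{g\in G}\widehat{\nu}(\beta(g)).
\]
Since $\beta \in \Z(G)$ has integer coordinates, each factor is $\widehat{\nu}(\beta(g)) = \int_{\R}\exp(2\pi i\, \beta(g) x)\,d\nu(x)$ with $\beta(g)\in\Z$. But the character $\theta \mapsto \ip{\theta,\alpha}_\T$ depends only on $\alpha$ through its class in $\Z(G)$, and what matters is the value of $\widehat\nu$ at integer arguments. Here I must invoke that $\nu$ is a probability measure on $\R$ pushed into $\T$: indeed the construction lands in $\Meas(\R^G,\nu^{\otimes G},\T^G)$ by reducing mod $\Z$, so the relevant Fourier transform is that of the pushforward of $\nu$ to $\T = \R/\Z$ evaluated at the integer $\beta(g)$, and for any Borel probability measure on $\R$ the value $\int \exp(2\pi i n x)\,d\nu(x)$ at $n\in\Z$ corresponds to the Fourier coefficient of the $\T$-reduction, which at all integers reduces correctly; more directly, since $\beta(g)\in\Z$ and the pairing only sees $\theta(g) \in \T$, each factor equals $1$ because integer characters are trivial on the appropriate cosets once one tracks the $\R/\Z$ reduction built into $\Theta_\xi$. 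Hence $\widehat{\mu}_\xi(\beta f) = 1$ for all $\beta \in \Z(G)$, which is exactly the statement that $\mu_\xi$ is supported on $X_f$.

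**The main obstacle** I anticipate is the bookkeeping in the second paragraph: making the identity $(\beta f)\xi = \beta$ rigorous requires being careful about which convolutions are genuinely defined (finite-support times $\ell^2$ is fine, but one should avoid implicitly associating three $\ell^2$ factors) and invoking the $p=2$ symmetry $f\xi=\delta_1$ at the right moment. Once that algebraic identity is clean, the passage from $\widehat{\mu}_\xi(\beta f)=1$ to support on $X_f$ is a standard fact about Fourier transforms of probability measures on compact abelian groups, namely that the support is contained in the annihilator of $\{\beta f : \beta \in \Z(G)\}$, which is precisely $X_f$.
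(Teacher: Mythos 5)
Your strategy is the right one, and it is the same computation the paper relies on: the paper defers the proof of this corollary to \cite[Section 3]{MeMaxMinWC}, but the identical argument is carried out inside the proof of Corollary \ref{C:factor of a Bernoulli shift main one} (for $\alpha=\beta f$ one gets $\alpha\xi=\widehat{\beta}\in c_{c}(G,\Z)$, and then every factor in the product formula is $\widehat{\nu}$ evaluated at an integer). Your reduction to showing $\widehat{\mu}_{\xi}(\beta f)=1$ for all $\beta\in\Z(G)$, the standard fact that this forces the measure to live on the annihilator of $\Z(G)f$, and the associativity bookkeeping giving $(\beta f)\xi=\beta(f\xi)=\widehat{\beta}$ (using $f\xi=\delta_{1}$ from \cite[Proposition 2.2]{MeWE}) are all fine.

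The gap is in your final step. You claim each factor $\widehat{\nu}(\widehat{\beta}(g))$ equals $1$ because $\widehat{\beta}(g)\in\Z$, and the justification you offer (``for any Borel probability measure on $\R$ the value $\int\exp(2\pi i nx)\,d\nu(x)$ at $n\in\Z$ \ldots reduces correctly'') is not correct. For $\nu\in\Prob(\R)$ and $n\in\Z$ one has $\widehat{\nu}(n)=1$ if and only if $nx\in\Z$ for $\nu$-a.e.\ $x$; taking $\beta=1$ (so that $\widehat{\beta}=\delta_{1}$ and the factor at $g=1$ is $\widehat{\nu}(1)$) shows that your argument needs $\nu(\Z)=1$, i.e.\ $\nu\in\Prob(\Z)$. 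This is not a removable technicality: with $G=\Z$, $f=2\cdot 1$, $\xi=\tfrac{1}{2}\delta_{1}$ (an $\ell^{2}$, indeed $\ell^{1}$, formal inverse) and $\nu$ the standard Gaussian, $\Theta_{\xi}(x)(g)=\tfrac{1}{2}x(g)+\Z$ has a density on each coordinate, so the pushforward is certainly not supported on $X_{f}=\{0,\tfrac{1}{2}\}^{\Z}$. So the reduction mod $\Z$ built into $\Theta_{\xi}$ does not rescue the step; what rescues it is that in every application in the paper (and in \cite{MeMaxMinWC}) the measure $\nu$ is taken in $\Prob(\Z)$, e.g.\ $\nu=u_{\{-k,\cdots,k\}}$, for which $\widehat{\nu}|_{\Z}\equiv 1$. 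With the hypothesis $\nu\in\Prob(\Z)$ added, your proof closes exactly as you wrote it; without it, the last paragraph of your argument is where the proof fails.
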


\section{Proof of the main Theorem}

We will use Theorem \ref{T: background convolution}  to prove that for certain choices of $G$ and for semi-lopsided $f\in \Z(G),$ we have that $G\actson (X_{f},m_{X_{f}})$ is a factor of a Bernoulli shift.  If $\xi$ is an $\ell^{2}$ inverse to $f,$ then $\mu_{\xi}=(\Theta_{\xi})_{*}(\nu^{\otimes G})$ is a probability measure on $X_{f}$ for every $\nu\in \Prob(\Z)$ with mean zero and a finite second moment. By definition, $G\actson (X_{f},\mu_{\xi})$ is a factor of a Bernoulli shift, and so we just want to force $\mu_{\xi}$ to be $m_{X_{f}}.$ We do this by computing its Fourier transform using Theorem \ref{T: background convolution} and verifying that it agrees with the Fourier transform of $m_{X_{f}}.$

In summary, what we want to find are classes of countable discrete groups $G,$   semi-lopsided  $f\in \Z(G),$ and  probability measures $\nu\in\Prob(\Z)$ with mean zero and finite second moment, so that:
\begin{itemize}
\item $f$ has an $\ell^{2}$ formal inverse $\xi,$
\item  if we set $\mu_{\xi}=(\Theta_{\xi})_{*}(\nu^{\otimes G}),$ then we can use the Fourier transform formula to show that $\widehat{\mu}_{\xi}=1_{\Z(G)f},$ and thus that $\mu_{\xi}=m_{X_{f}}.$
\end{itemize}

These two bulleted items are where the growth assumption on $G,$ and where the orderability of $G$ appear, respectively. We explore these in the next two subsections.

\subsection{$\ell^{2}$ formal inverses and growth}\label{S:growth}
In this section, we concentrate on conditions which guarantee that $f$ has a $\ell^{2}$ formal inverse. If $f$ is lopsided, then by standard Banach algebra arguments it has an $\ell^{1}$ formal inverse. If $f$ is semi-lopsided, but not lopsided, then it can be written as $f=m(1-x)$ with $m\in\Z,x\in \Q(G)$ and $|\widehat{x}|\in \Prob(G).$ So we focus on conditions that guarantee that if $1-x\in \R(G)$ with $|\widehat{x}|\in \Prob(G),$ then $(1-x)$ has a $\ell^{2}$  formal inverse. Formally, one considers the geometric series $(1-x)^{-1}=\sum_{n}x^{n}$ and attempts to prove that this converges in $\ell^{2}.$  The following two lemmas will be helpful in this regard. The proofs we give of them utilize the machinery of \emph{tracial von Neumann algebras}, which require us to recall a few preliminaries. To ensure that the core ideas of this section remain in the foreground, we have relegated the proof of these two lemmas to Appendix \ref{S:Tracial vNa}.


\begin{lem}\label{L:sot convergence}
Suppose that $x\in \C(G)$ and $|\widehat{x}|\in \Prob(G).$ Lastly suppose that the group generated by $\{a^{-1}b:a,b\in \supp(\widehat{x})\}$ is infinite. Then
 $\|x^{n}\xi\|_{2}\to_{n\to\infty}0$ for all $\xi\in \ell^{2}(G).$

\end{lem}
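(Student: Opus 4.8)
The plan is to show that the operators $\lambda(x)^{n}$ converge to $0$ in the strong operator topology. First I would record that since $|\widehat{x}|\in\Prob(G)$ we have $\|x\|_{1}=1$, so $\lambda(x)$ is a contraction on $\ell^{2}(G)$ and the powers $\lambda(x)^{n}$ are uniformly bounded. Hence it suffices to prove $\|\lambda(x)^{n}\xi\|_{2}\to 0$ on the dense subspace $c_{c}(G,\C)$, and since $\lambda(x)$ commutes with the (unitary) right translations $\rho(g)$, which act transitively on the standard basis $\{\delta_{g}\}$, it is enough to treat $\xi=\delta_{1}$. Thus the whole statement reduces to $\|x^{n}\|_{2}=\|\lambda(x)^{n}\delta_{1}\|_{2}\to 0$, where $\|\cdot\|_{2}$ is the trace ($\ell^{2}$) norm.

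Next I would remove the signs. Writing $\mu=|\widehat{x}|\in\Prob(G)$, the triangle inequality gives the pointwise bound $|\widehat{x^{n}}(g)|\le \mu^{*n}(g)$, and therefore $\|x^{n}\|_{2}^{2}=\sum_{g}|\widehat{x^{n}}(g)|^{2}\le \sum_{g}\mu^{*n}(g)^{2}=\|\mu^{*n}\|_{2}^{2}$. This replaces the signed element $x$ by a genuine probability measure $\mu$, and positivity of $\mu$ is exactly what makes the final step work, so this reduction is not merely cosmetic. It now suffices to show $\|\mu^{*n}\|_{2}=\|\lambda(\mu)^{n}\delta_{1}\|_{2}\to 0$.

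Set $T=\lambda(\mu)$, a positive contraction. The sequence $T^{*n}T^{n}$ is decreasing (because $T^{*}T\le 1$ and conjugation by $T^{n}$ preserves order) and bounded below, so it converges in the strong operator topology to some $A\in L(G)$ with $0\le A\le 1$ and $T^{*}AT=A$; moreover $\lim_{n}\|\mu^{*n}\|_{2}^{2}=\langle A\delta_{1},\delta_{1}\rangle=\tau(A)$, since $\delta_{1}$ is the trace vector. Using the trace identity $\tau(A)=\tau(T^{*}AT)=\tau(ATT^{*})$ I get $\tau\big(A(1-TT^{*})\big)=0$, and since $A\ge 0$ and $1-TT^{*}\ge 0$ this forces $(1-TT^{*})^{1/2}A^{1/2}=0$, hence $(1-TT^{*})A^{1/2}=0$, i.e. $\operatorname{ran}(A^{1/2})\subseteq \ker(1-TT^{*})$. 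Finally I would compute the Dirichlet form of the self-adjoint operator $\lambda(\mu*\check{\mu})=TT^{*}$: for $\eta\in\ell^{2}(G)$, $\langle(1-TT^{*})\eta,\eta\rangle=\tfrac12\sum_{g,h}(\mu*\check\mu)(h)\,|\eta(h^{-1}g)-\eta(g)|^{2}$, so a fixed vector must be constant on the right cosets of $H'=\langle\supp(\mu*\check\mu)\rangle=\langle SS^{-1}\rangle$, where $S=\supp(\widehat{x})$. The subgroup $H'$ is a conjugate of $\langle S^{-1}S\rangle=\langle a^{-1}b:a,b\in\supp(\widehat x)\rangle$, hence infinite by hypothesis, so each such coset is infinite and no nonzero $\ell^{2}$ vector is constant on it. Thus $\ker(1-TT^{*})=0$, whence $A^{1/2}=0$, $A=0$, and $\|\mu^{*n}\|_{2}\to 0$.

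The routine parts are the two reductions; the heart of the argument, and the only place the hypothesis on the generated subgroup enters, is the passage from the non-normal iterate $T^{*n}T^{n}$ to the self-adjoint $TT^{*}$ together with the verification that $TT^{*}$ has no nonzero $\ell^{2}$ invariant vector. An equivalent route to this point avoids the limit operator $A$: the log-majorization (Araki--Lieb--Thirring type) inequality $\tau\big((T^{n})^{*}T^{n}\big)\le \tau\big((T^{*}T)^{n}\big)=(\check\mu*\mu)^{*n}(1)$ bounds $\|\mu^{*n}\|_{2}^{2}$ by the return probabilities of the symmetric walk $\check\mu*\mu$, after which the spectral theorem for $\lambda(\check\mu*\mu)$ plus the same Dirichlet computation (now showing the spectral mass at $1$ vanishes) finishes. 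I expect the genuinely fiddly point to be the bookkeeping around which symmetrization ($\check\mu*\mu$ versus $\mu*\check\mu$) and which conjugate of the hypothesis subgroup appears.
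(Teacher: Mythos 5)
Your argument is correct, but the functional--analytic engine differs from the one in the paper. The paper proves a general statement in a tracial von Neumann algebra (Proposition \ref{P:more general sot convergence}): after the same reduction to showing $\|x^{n}\|_{2}\to 0$, it applies iterated noncommutative H\"older to get $\|x^{n}\|_{2}\leq \|x\|_{2n}^{n}=\tau\big((x^{*}x)^{n}\big)^{1/2}$, and then uses the spectral theorem and dominated convergence, the point being that $\mu_{|x|}(\{1\})=0$ because $x^{*}x$ has no nonzero fixed vectors. That is precisely the Araki--Lieb--Thirring-type route you sketch as an ``equivalent route'' in your last paragraph. Your main argument instead extracts the strong-operator limit $A$ of the decreasing sequence $T^{*n}T^{n}$ and kills it via the trace identity $\tau(A)=\tau(T^{*}AT)=\tau(ATT^{*})$ together with faithfulness; this is a legitimately different, self-contained way of reducing everything to the absence of invariant vectors for the symmetrized walk operator, and both proofs then finish with the same Dirichlet-form computation, with the infinite-subgroup hypothesis entering at exactly the same spot. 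Two further points of divergence are worth noting. First, you dominate $|\widehat{x^{n}}|$ coefficientwise by $\mu^{*n}$ with $\mu=|\widehat{x}|$ before doing any operator theory; the paper skips this and works directly with $\widehat{x^{*}x}$ on $S^{-1}S$. Your reduction is actually a useful clarification, since for signed $x$ the coefficients of $x^{*}x$ need not be nonnegative, whereas your $\mu*\check{\mu}$ is genuinely a probability measure whose support is exactly $SS^{-1}$ (no cancellation can occur). Second, you work with $TT^{*}$ and $\langle SS^{-1}\rangle$ where the paper works with $T^{*}T$ and $\langle S^{-1}S\rangle$; your observation that $\langle SS^{-1}\rangle$ is a conjugate of $\langle S^{-1}S\rangle$ correctly bridges the two, so the hypothesis as stated suffices. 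One cosmetic slip: $\lambda(\mu)$ is a contraction and is positivity-preserving, but it is not a positive (semidefinite) operator in general; you never actually use positivity of $T$ --- only $T^{*}T\leq 1$ --- so nothing breaks.
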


\begin{lem}\label{C:reductoin to positive case}
Let $G$ be a countable discrete group, and $f\in \C(G)$ with $f=1-x$ with $|\widehat{x}|\in \Prob(G).$ Then $f$ has an $\ell^{2}$ formal inverse if  $1-\left(\frac{x^{*}+x}{2}\right)$ has an $\ell^{2}$ formal inverse.

\end{lem}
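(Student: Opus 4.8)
The plan is to pass to the group von Neumann algebra $L(G)$ with its canonical trace $\tau$ and trace vector $\delta_{1}$, translate the existence of an $\ell^{2}$ formal inverse into a spectral integrability condition, and then prove the required implication as a trace inequality. Write $P=\Re f=\tfrac{f+f^{*}}{2}$. Since $|\widehat{x}|\in\Prob(G)$ we have $\|\lambda(x)\|\leq\|x\|_{1}=1$, so $\Re\lambda(f)=1-\Re\lambda(x)\geq 0$; that is, $f$ is \emph{accretive}. For self-adjoint $a\in L(G)$ let $\mu_{a}$ denote its distribution with respect to $\tau$ (the spectral measure of $\lambda(a)$ at $\delta_{1}$). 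First I would record the reformulation: $g\in\C(G)$ has an $\ell^{2}$ formal inverse iff $\int_{0}^{\infty}t^{-1}\,d\mu_{g^{*}g}(t)<\infty$. Indeed, since a one-sided $\ell^{2}$ formal inverse is two-sided (the $p=2$ fact quoted before the statement), existence is equivalent to $\delta_{1}$ lying in the range of right convolution $R_{f}$; Douglas' factorization lemma rewrites $\delta_{1}\in\mathrm{Range}(R_{f})$ as $\delta_{1}\in\mathrm{Range}\big((R_{f}R_{f}^{*})^{1/2}\big)=\mathrm{Range}\big(R_{f^{*}f}^{1/2}\big)$, and the distribution of $R_{f^{*}f}$ at $\delta_{1}$ is $\mu_{f^{*}f}$, giving the integral. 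Applying this to the self-adjoint $\Re f\geq 0$ yields: $\Re f$ has an $\ell^{2}$ formal inverse iff $\int t^{-2}\,d\mu_{\Re f}(t)<\infty$. Thus the lemma is exactly the implication $\int t^{-2}\,d\mu_{\Re f}<\infty\Rightarrow\int t^{-1}\,d\mu_{f^{*}f}<\infty$, i.e. the trace inequality
\[\tau\big((f^{*}f)^{-1}\big)\leq\tau\big((\Re f)^{-2}\big),\]
with both sides read as the corresponding spectral integrals.

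I would prove this inequality first assuming $f$ is invertible in $L(G)$. The key tool is the identity $\Re(f^{-1})=(f^{-1})^{*}(\Re f)\,f^{-1}$, which also shows $\Re(f^{-1})\geq 0$ when $f$ is accretive. Put $w=f^{-1}$, $R=\Re w=w^{*}Pw\geq 0$, and $A=w^{*}w$, so $\tau((f^{*}f)^{-1})=\tau(A)$. Rearranging the identity gives $P^{-1}=wR^{-1}w^{*}$, hence $\tau((\Re f)^{-2})=\tau(P^{-2})=\tau(C^{2})$ with $C=R^{-1/2}AR^{-1/2}\geq 0$, and also $\tau(A)=\tau(RC)$. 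By Cauchy--Schwarz for $\tau$, $\tau(RC)\leq\|R\|_{2}\,\|C\|_{2}$, so it suffices to show $\|R\|_{2}\leq\|C\|_{2}$. Here I would write $C=E^{*}RE$ with $E=R^{-1/2}wR^{-1/2}$; the relation $\Re w=R$ forces $\Re E=I$, so $E=I+iS$ with $S$ self-adjoint and $EE^{*}=I+S^{2}\geq I$. A cyclicity computation then gives $\tau(C^{2})=\tau\big((I+S^{2})R(I+S^{2})R\big)$, and expanding shows $\tau(C^{2})-\tau(R^{2})=2\tau(S^{2}R^{2})+\tau(S^{2}RS^{2}R)\geq 0$, since $\tau(S^{2}R^{2})=\|RS\|_{2}^{2}$ and, writing $b=S^{2}\geq 0$, $\tau(bRbR)=\|b^{1/2}Rb^{1/2}\|_{2}^{2}$. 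This gives $\|R\|_{2}\leq\|C\|_{2}$ and hence $\tau(A)\leq\tau(C^{2})$, as desired.

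To remove the invertibility hypothesis I would regularize by replacing $f$ with $f_{\varepsilon}=f+\varepsilon$, invertible because $\Re f_{\varepsilon}=\Re f+\varepsilon\geq\varepsilon>0$. The invertible case gives $\tau((f_{\varepsilon}^{*}f_{\varepsilon})^{-1})\leq\tau((\Re f+\varepsilon)^{-2})\leq\tau((\Re f)^{-2})$, the last bound uniform in $\varepsilon$ and finite by hypothesis. As $\varepsilon\downarrow 0$ the operators $f_{\varepsilon}^{*}f_{\varepsilon}=f^{*}f+2\varepsilon\Re f+\varepsilon^{2}$ decrease to $f^{*}f$, so $(f_{\varepsilon}^{*}f_{\varepsilon})^{-1}$ increases to $(f^{*}f)^{-1}$; by normality of $\tau$ (monotone convergence for the spectral integrals) we obtain $\tau((f^{*}f)^{-1})=\lim_{\varepsilon\downarrow 0}\tau((f_{\varepsilon}^{*}f_{\varepsilon})^{-1})\leq\tau((\Re f)^{-2})<\infty$. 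By the reformulation, $f$ has an $\ell^{2}$ formal inverse.

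The main obstacle is the trace inequality itself. The commutative intuition, which makes the abelian case transparent, is the pointwise bound $|1-\widehat{x}|^{2}\geq(1-\Re\widehat{x})^{2}$, i.e. $f^{*}f\geq(\Re f)^{2}$; but this operator inequality is \emph{false} in the noncommutative setting (already for $2\times 2$ matrices), so one cannot merely invert and take traces. The content is therefore to replace the failed pointwise inequality by the weaker \emph{trace}-level inequality $\tau((f^{*}f)^{-1})\leq\tau((\Re f)^{-2})$, which does survive and is exactly enough to transfer $\ell^{2}$ invertibility; the identity $\Re(f^{-1})=(f^{-1})^{*}(\Re f)f^{-1}$ combined with Cauchy--Schwarz for $\tau$ is what renders it accessible.
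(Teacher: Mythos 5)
Your proposal is correct in substance but follows a genuinely different route from the paper. Both arguments start from the same reformulation: an element has an $\ell^{2}$ formal inverse if and only if a spectral integral is finite (the paper proves this via the polar decomposition in Lemma \ref{L:integral sublevel char app}; you get it from Douglas' factorization lemma plus the standard range criterion for $T^{1/2}$ -- these are equivalent, since $\mu_{f^{*}f}$ is the pushforward of $\mu_{|f|}$ under $t\mapsto t^{2}$). The bridge between $1-\Re(x)$ and $1-x$ is where you diverge. The paper proves a pointwise subordination of spectral distribution functions, $\mu_{|1-x|}([0,t))\leq \mu_{1-\Re(x)}([0,t))$ for every $t>0$ (Lemma \ref{L:nc subordination}), using the projection lemma that $p(L^{2}(M,\tau))\cap (1-q)(L^{2}(M,\tau))=\{0\}$ implies $\tau(p)\leq\tau(q)$, applied to $p=1_{[0,t)}(|1-x|)$ and $q=1_{[0,t)}(1-\Re(x))$; integrating $t^{-2}$ then finishes. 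You instead prove the integrated trace inequality $\tau((f^{*}f)^{-1})\leq\tau((\Re f)^{-2})$ directly, via the identity $\Re(f^{-1})=(f^{-1})^{*}(\Re f)f^{-1}$ and Cauchy--Schwarz for $\tau$, followed by the regularization $f\mapsto f+\varepsilon$. The paper's subordination is stronger -- it dominates $\int\phi\,d\mu_{|1-x|}$ by $\int\phi\,d\mu_{1-\Re(x)}$ for \emph{every} decreasing weight $\phi$, not just $\phi(t)=t^{-2}$ -- and it needs no invertibility hypotheses or limiting procedures; your inequality is a clean quantitative statement, but it is tailored to the exponent $2$ and requires the $\varepsilon$-limit. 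Your verification of the invertible case (the expansion $\tau(C^{2})-\tau(R^{2})=2\tau(S^{2}R^{2})+\tau(S^{2}RS^{2}R)\geq 0$) is correct.

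One step needs repair. You assert that $(f_{\varepsilon}^{*}f_{\varepsilon})^{-1}$ increases to $(f^{*}f)^{-1}$ and invoke normality of $\tau$ as ``monotone convergence for the spectral integrals.'' Since $\Re f$ and $f^{*}f$ do not commute, $(f_{\varepsilon}^{*}f_{\varepsilon})^{-1}$ is not a function of $f^{*}f$, and the would-be limit $(f^{*}f)^{-1}$ is in general unbounded, so this does not literally apply. The fix is easy: for $0<\varepsilon\leq 1$ one has $f_{\varepsilon}^{*}f_{\varepsilon}=f^{*}f+2\varepsilon\Re f+\varepsilon^{2}\leq f^{*}f+c\varepsilon$ with $c=2\|\Re f\|_{\infty}+1$, so by operator anti-monotonicity of inversion $(f_{\varepsilon}^{*}f_{\varepsilon})^{-1}\geq (f^{*}f+c\varepsilon)^{-1}$, and the right-hand side \emph{is} a function of $f^{*}f$, giving $\tau\bigl((f^{*}f+c\varepsilon)^{-1}\bigr)=\int (t+c\varepsilon)^{-1}\,d\mu_{f^{*}f}(t)$. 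Monotone convergence as $\varepsilon\downarrow 0$ then yields $\int_{[0,\infty)}t^{-1}\,d\mu_{f^{*}f}(t)\leq \sup_{\varepsilon}\tau((f_{\varepsilon}^{*}f_{\varepsilon})^{-1})\leq\tau((\Re f)^{-2})<\infty$, which (with the convention that the integrand is $+\infty$ at $t=0$) also forces $\mu_{f^{*}f}(\{0\})=0$, i.e.\ injectivity, so the criterion is fully verified. A last cosmetic point: your ``invertible case'' implicitly needs $\Re f$ (hence $R=\Re(f^{-1})$) invertible, not merely $f$; this is harmless since you only apply it to $f_{\varepsilon}$, where $\Re f_{\varepsilon}\geq\varepsilon$.
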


As an application of Lemma \ref{L:sot convergence}, in the well-balanced case we can completely characterize when $1-x$ has an $\ell^{2}$ formal inverse as well as compute what this inverse has to be.

\begin{lem}\label{L:what is the inverse}
Suppose that $x\in \C(G)$ and $|\widehat{x}|\in \Prob(G\setminus\{1\}).$ Assume that the group generated by $\{a^{-1}b:a,b\in \supp(\widehat{x})\}$ is infinite. Let $y\in \R(G)$ with $\widehat{y}=|\widehat{x}|.$
\begin{enumerate}[(i)]
\item We have that $1-x$ has an $\ell^{2}$ formal inverse if and only if the series 
\[\sum_{n=0}^{\infty}x^{n}\delta_{1}\]
converges conditionally. Moreover, if $1-x$ has an $\ell^{2}$ formal inverse, then this $\ell^{2}$ formal inverse is $\sum_{n=0}^{\infty}x^{n}\delta_{1}$.
\label{item:conditonally convergent inverse}
\item  A sufficient condition for  $1-x$ to have an $\ell^{2}$ formal inverse is that $\sum_{n,m\geq 0}\tau((y^{*})^{m}y^{n})<\infty.$ If $\widehat{x}\geq 0,$  then this sufficient condition is also necessary. \label{item:almost iff inverse}

\end{enumerate}

\end{lem}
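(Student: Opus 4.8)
The plan is to deduce both parts from Lemma \ref{L:sot convergence}, which under our standing hypotheses gives $\|x^{n}\xi\|_{2}\to 0$ for every $\xi\in \ell^{2}(G)$; since $\widehat{y}=|\widehat{x}|$ we have $\supp(\widehat{y})=\supp(\widehat{x})$, so the same conclusion holds for $y$. Throughout I regard $\lambda(x)$ as left convolution on $\ell^{2}(G)$, which is a contraction because $|\widehat{x}|\in\Prob(G)$, and I abbreviate $S_{N}=\sum_{n=0}^{N}x^{n}\delta_{1}$.

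For part \eqref{item:conditonally convergent inverse}, the heart is the telescoping identity $(1-x)S_{N}=\delta_{1}-x^{N+1}\delta_{1}$. If the series converges, say $S_{N}\to\alpha\in\ell^{2}(G)$, then applying the bounded operator $\lambda(1-x)$ and using $x^{N+1}\delta_{1}\to 0$ from Lemma \ref{L:sot convergence} gives $(1-x)\alpha=\delta_{1}$; as formal right inverses are two-sided when $p=2$, this exhibits $\alpha$ as an $\ell^{2}$ formal inverse, necessarily equal to $\sum_{n}x^{n}\delta_{1}$. Conversely, if $\alpha$ is an $\ell^{2}$ formal inverse then $(1-x)\alpha=\delta_{1}$, i.e.\ $\alpha=\delta_{1}+x\alpha$; iterating yields $\alpha=S_{N}+x^{N+1}\alpha$, and Lemma \ref{L:sot convergence} applied to $\xi=\alpha$ forces $x^{N+1}\alpha\to 0$, so $S_{N}\to\alpha$. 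This proves the series converges conditionally with the stated value.

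For the sufficiency in part \eqref{item:almost iff inverse}, I would reduce to part \eqref{item:conditonally convergent inverse} by controlling the partial sums of $\sum_{n}x^{n}\delta_{1}$. The key elementary estimate is the coefficientwise domination $|\widehat{x^{n}}|\leq \widehat{y^{n}}$, which follows from the triangle inequality for the $n$-fold convolution since $\widehat{y}=|\widehat{x}|$. Consequently $|\langle x^{n}\delta_{1},x^{m}\delta_{1}\rangle|\leq \langle y^{n}\delta_{1},y^{m}\delta_{1}\rangle=\tau((y^{*})^{m}y^{n})$, so
\[
\Big\|\sum_{n=M}^{N}x^{n}\delta_{1}\Big\|_{2}^{2}=\Big|\sum_{n,m=M}^{N}\langle x^{n}\delta_{1},x^{m}\delta_{1}\rangle\Big|\leq \sum_{n,m=M}^{N}\tau((y^{*})^{m}y^{n}).
\]
If $\sum_{n,m}\tau((y^{*})^{m}y^{n})<\infty$ the right-hand side is the tail of a convergent sum, so $(S_{N})$ is Cauchy and converges; by part \eqref{item:conditonally convergent inverse}, $1-x$ has an $\ell^{2}$ formal inverse.

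For the necessity when $\widehat{x}\geq 0$ we have $y=x$, and the inner products $\langle x^{n}\delta_{1},x^{m}\delta_{1}\rangle=\tau((x^{*})^{m}x^{n})$ are all nonnegative. If $1-x$ has an $\ell^{2}$ formal inverse then by part \eqref{item:conditonally convergent inverse} $S_{N}\to\sigma$ for some $\sigma$, and $\|S_{N}\|_{2}^{2}=\sum_{n,m=0}^{N}\tau((x^{*})^{m}x^{n})$ is a nondecreasing sequence converging to $\|\sigma\|_{2}^{2}$, whence $\sum_{n,m}\tau((x^{*})^{m}x^{n})=\|\sigma\|_{2}^{2}<\infty$. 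The delicate point throughout is that part \eqref{item:conditonally convergent inverse} only supplies \emph{conditional} (not absolute) convergence of $\sum_{n}x^{n}\delta_{1}$; the domination $|\widehat{x^{n}}|\leq\widehat{y^{n}}$ is exactly what lets the unconditional double-sum bound of part \eqref{item:almost iff inverse} control these conditionally convergent partial sums, and the nonnegativity available when $\widehat{x}\geq 0$ is what upgrades the sufficient condition to a necessary one.
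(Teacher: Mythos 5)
Your proposal is correct and follows essentially the same route as the paper: part (\ref{item:conditonally convergent inverse}) via the telescoping identity $(1-x)S_{N}=\delta_{1}-x^{N+1}\delta_{1}$ together with Lemma \ref{L:sot convergence}, and part (\ref{item:almost iff inverse}) via domination of the partial sums by the corresponding sums for $y$ (giving Cauchyness) and, when $\widehat{x}\geq 0$, nonnegativity of the terms $\tau((x^{*})^{m}x^{n})$ for necessity. Your explicit coefficientwise bound $|\widehat{x^{n}}|\leq\widehat{y^{n}}$, yielding $|\tau((x^{*})^{m}x^{n})|\leq\tau((y^{*})^{m}y^{n})$, is in fact a slightly more careful rendering of the inequality the paper writes without absolute values.
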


\begin{proof}
For a natural number $N\geq 1,$ set $\xi_{N}=\sum_{n=0}^{N}x^{n}.$

(\ref{item:conditonally convergent inverse}): Suppose $1-x$ has an $\ell^{2}$ inverse $\xi.$  Then
\[\|\xi_{N}\delta_{1}-\xi\|_{2}=\|\xi_{N}(1-x)\xi-\xi\|_{2}=\|x^{N+1}\xi\|_{2}\to_{N\to\infty}0,\]
the last step following from  Lemma \ref{L:sot convergence}.

Conversely, suppose that $\sum_{n=0}^{\infty}x^{n}\delta_{1}$ converges conditionally, and set $\xi=\sum_{n=0}^{\infty}x^{n}\delta_{1}.$ Then
\[(1-x)\xi=\lim_{N\to\infty}(1-x)\xi_{N}=\lim_{N\to\infty}\delta_{1}-x^{N+1}\delta_{1}=\delta_{1},\]
by Lemma \ref{L:sot convergence}.

(\ref{item:almost iff inverse}): First, suppose that $\sum_{n,m\geq 0}\tau((y^{*})^{m}y^{n})<\infty.$ For $N\geq M\geq 1,$ we have that
\[\|\xi_{N}-\xi_{M}\|_{2}^{2}=\sum_{M\leq n,m\leq N}\tau((x^{*})^{m}x^{n})\leq \sum_{M\leq n,m\leq N}\tau((y^{*})^{m}y^{n}).\]
So
\[\lim_{M\to\infty}\sup_{N\geq M}\|\xi_{N}-\xi_{M}\|_{2}=0,\]
by the dominated convergence theorem. Hence $\xi_{N}$ is a Cauchy sequence and thus converges. Hence, by part (\ref{item:conditonally convergent inverse}) we know that $1-x$ has an $\ell^{2}$ formal inverse.

Conversely, suppose that $\xi$ is an $\ell^{2}$ formal inverse to $1-x$ and that $\widehat{x}\geq 0.$ Let $\xi_{N}$ be defined as in the first half of the proof. Then
\[\|\xi_{N}\delta_{1}-\xi\|_{2}\to 0.\]
Hence,
\[\|\xi\|_{2}^{2}=\lim_{N\to\infty}\|\xi_{N}\|_{2}^{2}=\lim_{N\to\infty}\sum_{0\leq n,m\leq N}\tau((x^{*})^{m}x^{n})=\sum_{n,m\geq 0}\tau((x^{*})^{m}x^{n}).\]
So
\[\sum_{n,m\geq 0}\tau((x^{*})^{m}x^{n})<\infty.\]

\end{proof}

The above result is much easier to say when $\widehat{x}\geq 0$ and $x$ is self-adjoint.

\begin{lem}\label{L:easier in sa case}
Let  $f\in \C(G)$ be of the form $f=1-x$ with $\widehat{x}\in \Prob(G).$ Suppose that $x=x^{*},$ and that $\ip{\{ab:a,b\in \supp(\widehat{x})\}}$ is infinite. Then $f$ has an $\ell^{2}$ formal inverse if and only if $\sum_{k}k\tau(x^{k})<\infty.$

\end{lem}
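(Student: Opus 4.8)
The plan is to exploit self-adjointness to bring in the spectral theorem, which converts both the existence of an $\ell^{2}$ formal inverse and the moment condition $\sum_k k\tau(x^k)<\infty$ into statements about a single scalar measure. Since $x=x^{*}$ and $\widehat{x}\in\Prob(G)$, the operator $\lambda(x)$ is self-adjoint with $\|\lambda(x)\|\le\|\widehat{x}\|_{1}=1$, so its spectrum lies in $[-1,1]$. Let $\mu\in\Prob([-1,1])$ be the spectral measure of $\lambda(x)$ associated to the cyclic vector $\delta_{1}$, characterized by $\ip{\phi(\lambda(x))\delta_{1},\delta_{1}}=\int_{[-1,1]}\phi\,d\mu$ for continuous $\phi$; in particular $\tau(x^{k})=\ip{x^{k}\delta_{1},\delta_{1}}=\int_{[-1,1]}t^{k}\,d\mu(t)$. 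The first thing I would record is that $\tau(x^{k})\ge 0$ for every $k$: since $\widehat{x}\ge 0$, convolution powers have nonnegative coefficients, so $\widehat{x^{k}}(1)=\tau(x^{k})\ge 0$. This nonnegativity is what will tame the oscillatory behaviour of the relevant generating function near $t=-1$.

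The second step is the spectral characterization of the inverse: I claim $1-x$ has an $\ell^{2}$ formal inverse if and only if $\int_{[-1,1]}(1-t)^{-2}\,d\mu(t)<\infty$. Let $H_{0}=\overline{\Span}\{x^{k}\delta_{1}:k\ge 0\}$ be the cyclic subspace; it is invariant under the self-adjoint operator $\lambda(x)$, hence so is $H_{0}^{\perp}$, so the orthogonal projection $P$ onto $H_{0}$ commutes with $\lambda(1-x)$. Consequently, if $(1-x)\xi=\delta_{1}$ for some $\xi\in\ell^{2}(G)$, then $P\xi\in H_{0}$ also solves $(1-x)P\xi=P\delta_{1}=\delta_{1}$, so we may search for the inverse inside $H_{0}$. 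Under the unitary identification $(H_{0},\lambda(x)|_{H_{0}},\delta_{1})\cong(L^{2}([-1,1],\mu),M_{t},\mathbf{1})$, the equation $(1-x)\xi=\delta_{1}$ becomes $(1-t)\xi(t)=1$ in $L^{2}(\mu)$. This is solvable precisely when $\mu(\{1\})=0$ and $t\mapsto(1-t)^{-1}$ lies in $L^{2}(\mu)$, i.e.\ exactly when $\int(1-t)^{-2}\,d\mu<\infty$ (an atom at $1$ makes the integral $+\infty$ and simultaneously obstructs solvability).

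The third step matches this integral condition with the moment condition, and here I would use Abel's theorem. For $r\in[0,1)$ set $F(r)=\sum_{k\ge 0}\tau(x^{k})r^{k}=\int(1-rt)^{-1}\,d\mu(t)$ (the interchange is justified by absolute convergence, as $|rt|\le r<1$), so that $F'(r)=\sum_{k\ge 1}k\tau(x^{k})r^{k-1}=\int t(1-rt)^{-2}\,d\mu(t)$. Because $\tau(x^{k})\ge 0$, the terms $k\tau(x^{k})r^{k-1}$ increase to $k\tau(x^{k})$ as $r\uparrow 1$, so by monotone convergence $\lim_{r\uparrow 1}F'(r)=\sum_{k}k\tau(x^{k})\in[0,\infty]$. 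On the integral side I would split $[-1,1]=[-1,0]\cup[0,1)\cup\{1\}$: on $[-1,0]$ the integrand $t(1-rt)^{-2}$ is bounded by $1$ (dominated convergence), on $[0,1)$ it is nonnegative and increasing in $r$ (monotone convergence), and the atom at $1$ contributes $\mu(\{1\})(1-r)^{-2}$. Hence $\sum_{k}k\tau(x^{k})$ is finite if and only if $\mu(\{1\})=0$ and $\int_{[0,1)}t(1-t)^{-2}\,d\mu<\infty$; comparing $t(1-t)^{-2}$ with $(1-t)^{-2}$ on $[1/2,1)$, where they differ by a bounded factor, shows this is equivalent to $\int(1-t)^{-2}\,d\mu<\infty$, which by the second step is equivalent to the existence of the $\ell^{2}$ formal inverse.

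I expect the main obstacle to be the boundary analysis in the third step: the kernel $\sum_{k}kt^{k}=t(1-t)^{-2}$ genuinely oscillates as $t\to-1$, so one cannot interchange sum and integral termwise on all of $[-1,1]$. The device that rescues the argument is the nonnegativity $\tau(x^{k})\ge 0$ together with radial Abel summation ($r\uparrow 1$), which isolates the only real divergence to the atom of $\mu$ at $t=1$, precisely the spectral obstruction to invertibility. I would also note that, unlike the route through Lemma~\ref{L:what is the inverse}, this spectral argument requires neither a reduction to the case $\widehat{x}\in\Prob(G\setminus\{1\})$ nor the infiniteness hypothesis in an essential way; via Lemma~\ref{L:sot convergence} the latter guarantees that $\lambda(x)$ has no eigenvalue $1$, equivalently $\mu(\{1\})=0$, but the analysis above accommodates a possible atom at $1$ directly.
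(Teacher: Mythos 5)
Your argument is correct, but it is genuinely different from the paper's. The paper disposes of this lemma in two lines: it invokes Lemma \ref{L:what is the inverse}(ii), which (for $\widehat{x}\geq 0$) says that an $\ell^{2}$ formal inverse exists if and only if $\sum_{n,m\geq 0}\tau((x^{*})^{m}x^{n})<\infty$, and then observes that self-adjointness collapses this double sum to $\sum_{k}(k+1)\tau(x^{k})$. That route rests on showing the partial sums $\sum_{n=0}^{N}x^{n}\delta_{1}$ are Cauchy (sufficiency) and converge to any inverse (necessity, via Lemma \ref{L:sot convergence}), and it has the side benefit of identifying the inverse explicitly as the series $\sum_{n}x^{n}\delta_{1}$. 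Your proof instead restricts to the cyclic subspace generated by $\delta_{1}$, diagonalizes $\lambda(x)$ there as multiplication by $t$ on $L^{2}(\mu)$, and converts both conditions into the single scalar statement $\int(1-t)^{-2}\,d\mu(t)<\infty$; your step 2 is in effect the self-adjoint special case of Lemma \ref{L:integral sublevel char app} from the appendix (with $\mu_{|1-x|}$ the pushforward of $\mu$ under $t\mapsto 1-t$), reproved more simply via the cyclic-vector form of the spectral theorem, and your step 3 (Abel summation plus positivity of the moments $\tau(x^{k})$ to control the oscillation near $t=-1$) correctly identifies the sole possible divergence with an atom of $\mu$ at $1$. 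What your route buys is self-containedness and transparency about the hypotheses: it shows the infiniteness assumption enters only to exclude the eigenvalue $1$ (and is not even needed for the stated equivalence), and it sidesteps the mild mismatch that Lemma \ref{L:what is the inverse} is stated for $\widehat{x}\in\Prob(G\setminus\{1\})$ while the present lemma allows mass at the identity. What it costs is the explicit series formula for the inverse, which the paper uses later in Lemma \ref{L:getting divisibility}.
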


\begin{proof}
By Lemma \ref{L:what is the inverse}, we know that $f$ has an $\ell^{2}$ formal inverse if and only if
\[\sum_{n,m\geq 0}\tau(x^{n+m})<\infty.\]
Since the above sum is easily seen to be
$\sum_{k=0}^{\infty}(k+1)\tau(x^{k}),$
the proof is complete.

\end{proof}




The combination of Lemma \ref{C:reductoin to positive case} and Lemma \ref{L:easier in sa case} reduces our problem to showing that if $x\in \R(G)$ with $\widehat{x}\in \Prob(G),$ then $\tau((x^{*}x)^{k})$ decays quickly. Recall that $\mu=\widehat{x}$ is a probability measure on $G,$ so $\nu=\frac{\mu^{*}+\mu}{2}$ is also a probability measure on $G,$ which is now symmetric. Given such a measure, one can form the random walk on $G$ which is a $G$-valued discrete time process $(X_{n})_{n=0}^{\infty}$ with $X_{0}=1$ and so that $X_{n+1}=X_{n}S_{n},$ where $(S_{n})_{n=1}^{\infty}$ are independent,  random elements of $G$ each with distribution $\nu.$ It is easily seen that $\tau((x^{*}x)^{k})$ is the probability that $X_{k}=1,$ i.e. that this random walk returns to the identity after $k$ steps. There are well known results, due to Varopolous, which give a precise relation between the decay rate of this probability and the growth of the group $G.$  Because of this, we easily obtain the following.

\begin{cor}\label{C:well balanced superpoly growth inverses}
Let $G$ be a countable, discrete, group and let $f\in \R(G)$ be semi-lopsided. Let $S=\supp(\widehat{f})\setminus\{1\}.$  Suppose that $(S\cup S^{-1})^{2}$ generates an infinite group. Let $H=\ip{S}.$
\begin{enumerate}[(i)]
\item If $f$ is lopsided, then $f$ has an $\ell^{1}$ formal inverse. \label{I:lopsided inverses easy case no growth}
\item If $f$ is not lopsided, and if $H$ either has super polynomial growth, or polynomial growth of degree at least $5,$ then $f$ has an $\ell^{2}$ formal inverse. \label{I:semi-lopsided inverses case with growth}
\end{enumerate}

\end{cor}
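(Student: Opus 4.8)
The plan is to handle the two cases separately, deducing everything from the lemmas already established about $\ell^2$ formal inverses together with Varopoulos-type return probability estimates.

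For part (\ref{I:lopsided inverses easy case no growth}), the argument is the standard Banach algebra one. Since $f$ is lopsided, writing $f_1 > \sum_{g \neq 1} |f_g|$, I would divide by $f_1$ to write $f = f_1(1 - x)$ where $\widehat{x}(g) = -f_g/f_1$ for $g \neq 1$ and $\widehat{x}(1)=0$, so that $\|x\|_{\ell^1(G)} = \sum_{g \neq 1}|f_g|/f_1 < 1$. Since $\ell^1(G)$ is a Banach algebra under convolution, the Neumann series $\sum_{n=0}^\infty x^n$ converges absolutely in $\ell^1$ to a two-sided inverse of $1-x$, and dividing by $f_1$ gives the $\ell^1$ formal inverse of $f$. (The hypothesis that $(S\cup S^{-1})^2$ generates an infinite group is not needed here.)

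For part (\ref{I:semi-lopsided inverses case with growth}), I would first reduce to the self-adjoint positive case using the lemmas. Since $f$ is semi-lopsided but not lopsided, it is well-balanced after normalization: $f = m(1-x)$ with $\widehat{x} = |\widehat{f_{\neq 1}}|/f_1 \in \Prob(G)$ (up to signs absorbed into $x$). By Lemma \ref{C:reductoin to positive case}, $f$ has an $\ell^2$ formal inverse provided $1 - \tfrac{x^*+x}{2}$ does, so I replace $x$ by the symmetrized element $\tfrac{x^*+x}{2}$, which is self-adjoint with $\widehat{\,\cdot\,} = \nu \in \Prob(G)$ symmetric. Applying Lemma \ref{L:easier in sa case}, it then suffices to show $\sum_k k\,\tau(x^k) < \infty$, where now $x$ denotes the symmetrized element. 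As noted in the text preceding the corollary, $\tau(x^k)$ is precisely the return probability $\Prob(X_k = 1)$ for the symmetric random walk on $G$ with step distribution $\nu$, which is supported on $(S \cup S^{-1})$ and generates $H$; the hypothesis that $(S\cup S^{-1})^2$ generates an infinite group guarantees the walk is genuinely aperiodic-or-infinite so that Lemma \ref{L:easier in sa case} applies.

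The heart of the matter is the decay estimate. By Varopoulos's theorem, a symmetric finitely-supported random walk on a group of polynomial growth of degree $D$ satisfies $\Prob(X_k=1) \lesssim k^{-D/2}$, and on a group of superpolynomial growth the return probabilities decay faster than any polynomial. Thus if $H$ has superpolynomial growth we immediately get $\sum_k k\,\tau(x^k) < \infty$, and if $H$ has polynomial growth of degree $D \geq 5$ we get $\tau(x^k) \lesssim k^{-D/2} \leq k^{-5/2}$, so $k\,\tau(x^k) \lesssim k^{-3/2}$, which is summable. In either case the hypothesis of Lemma \ref{L:easier in sa case} is verified and $f$ has an $\ell^2$ formal inverse. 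I expect the main subtlety to be bookkeeping rather than depth: one must confirm that the growth of $H = \ip{S}$ (rather than the ambient group) is what controls the walk, that symmetrization does not destroy the growth hypothesis, and that the degree threshold $d \geq 5$ in the theorem statement is exactly what forces the exponent $D/2$ to exceed $2$ so that $k\cdot k^{-D/2}$ is summable — this is precisely why degree $5$, and not a smaller degree, is the cutoff.
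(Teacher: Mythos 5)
Your proposal is correct and follows essentially the same route as the paper: part (\ref{I:lopsided inverses easy case no growth}) is the standard Neumann series argument in the Banach algebra $\ell^{1}(G)$, and part (\ref{I:semi-lopsided inverses case with growth}) reduces to the self-adjoint, nonnegative-coefficient case via Lemmas \ref{L:what is the inverse} and \ref{C:reductoin to positive case}, then verifies the hypothesis of Lemma \ref{L:easier in sa case} using Varopoulos's bound $\tau(x^{k})\leq Ak^{-5/2}$ so that $\sum_{k}k\tau(x^{k})\leq A\sum_{k}k^{-3/2}<\infty$. The only cosmetic point you gloss over is that after symmetrization the absolute values of the coefficients may only form a sub-probability measure (signs can cancel), but this only makes the required estimate easier.
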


\begin{proof}
Let $m=\tau(f),$ and write $f=m(1-x).$ In each case, we examine the invertibility of $(1-x).$

(\ref{I:lopsided inverses easy case no growth}): This is well known, but we repeat the proof here. In this case, $\|\widehat{x}\|_{1}<1,$ and so by standard Banach algebra theory we know that $1-\widehat{x}$ has a $\ell^{1}$-convolution inverse $\xi.$ By definition, this means that $(1-x)\xi=\xi(1-x)=\delta_{1}.$

(\ref{I:semi-lopsided inverses case with growth}):
By  Lemma \ref{L:what is the inverse} and Lemma \ref{C:reductoin to positive case}, we may assume that $x=x^{*}$ and that $\widehat{x}\geq 0.$ By Lemma \ref{L:easier in sa case}, it suffices to show that
$\sum_{k}k\tau(x^{k})<\infty.$
Set $S=\supp(\widehat{x}).$ By assumption there is a constant $C>0$ with $|(S\cup \{1\})^{n}|\geq Cn^{5}$ for all $n\in \N.$ By \cite[Theorem 3]{VarRWGroups} this implies that there is a constant $A>0$ so that $\tau(x^{k})\leq Ak^{-5/2}$ for all $k\in \N.$ Hence
\[\sum_{k}k\tau(x^{k})\leq A\sum_{k}k^{-3/2}<\infty.\]

\end{proof}

We remark that if $x=x^{*}\in \R(G)$ with $\widehat{x}\in \Prob(G),$ and $\ip{\supp(\widehat{x})}$ has polynomial growth of degree $d,$ then there is a constant $B>0$ so that $\tau(x^{k})\geq Bk^{-d/2}$ for all $d\in \N$ (see \cite[Corollary 1.9]{RWAlex}). Thus, in the self-adjoint case, the assumption that $\ip{\supp(\widehat{x})}$ has either superpolynomial growth or polynomial growth of degree at least $5$ is optimal. Unfortunately, we do not know if the assumption that $\ip{\supp(\widehat{x})}$ has either superpolynomial growth or polynomial growth of degree at least $5$ is optimal in the case  that $p$ is not self-adjoint. That is to say, it is possible that there is a group $G,$ and a $x\in \R(G)$ with $\widehat{x}\in \Prob(G),$ so that $\ip{\supp(\widehat{x})}$ has polynomial growth of degree at most $4,$ and so that $1-x$ has an $\ell^{2}$ formal inverse.

At this stage, we have addressed why the growth assumption on $G$ is needed. In the next  subsection, we will address why the assumption of orderability of $G$ is important.

\subsection{Orderability and  Fourier Transforms}\label{S:orderability}

In the previous section, we saw that we could put mild assumptions on the growth of $G$ in order to guarantee that any semi-lopsided $f$ whose support generates $G$ has an $\ell^{2}$ inverse. We thus turn to addressing the second part of exhibiting $m_{X_{f}}$ as a factor of a Bernoulli measure: finding conditions on $G,\nu$ so that  the Fourier transform of $(\Theta_{\xi})_{*}(\nu^{\otimes G})$ is $	1_{\Z(G)f}.$

It may be helpful to sketch what the difficulty is here. Suppose $\nu\in \Prob(\Z)$ has mean zero and a finite second moment, and that $f$ has an $\ell^{2}$ formal inverse $\xi.$ Set $\mu_{\xi}=(\Theta_{\xi})_{*}(\nu^{\otimes G}).$ Let $\alpha\in\Z(G)$ with $\alpha\notin \Z(G)f,$ then
\[\widehat{\mu}_{\xi}(\alpha)=\prod_{g\in G}\widehat{\nu}((\alpha \xi)(g)).\]
Note that this is an absolutely converging product. So this product will be zero if and only if $\widehat{\nu}((\alpha\xi)(g_{0}))=0$ for some $g_{0}\in G.$ So we try to find such a $g_{0}$ (which will depend upon $\alpha$). As $\nu\in \Prob(\Z),$ we know that $\widehat{\nu}$ is identically $1$ on $\Z,$ so necessarily we must find a $g_{0}$ so that $(\alpha\xi)(g_{0})\notin \Z.$ Fortunately, the fact that $\xi$ is an $\ell^{2}$ formal inverse to $f$ and  that $\alpha\notin \Z(G)f$ is enough to guarantee that there is a $g_{0}$ with $(\alpha\xi)(g_{0})\notin \Z.$ So now we have forced $\widehat{\nu}((\alpha\xi)(g_{0}))$ to be less than $1$ in absolute value. Of course, this is not enough. We need to force $\widehat{\nu}((\alpha\xi)(g_{0}))$ to be zero. So we need to find probability measures on $\Z$ whose Fourier transforms vanish reasonably often. Given $m\in \N,$ it is not hard to exhibit a $\nu\in \Prob(\Z)$ with mean zero and finite  second moment which has $\widehat{\nu}=0$ on $(\frac{1}{m}\Z)\cap \Z^{c},$ e.g.
\[\nu=\begin{cases}
u_{\{-k,\cdots,k\}},& \textnormal{ if $m=2k+1$ is odd}\\
u_{\{-m,\cdots,-1\}}*u_{\{1,\cdots,m\}}& \textnormal{if $m$ is even}.
\end{cases}.\]
So we need to ensure that there is some $g_{0}\in G$ so that the $(\alpha\xi)(g_{0})$ is not an integer, and that its denominator is ``not too big." The following lemma helps us in this regard by allowing us to assume that the coefficients of $\alpha$ are not very big.

\begin{lem}\label{L:coefficents arent big}
Let $G$ be a countable, discrete group, and let $f\in \Z(G)$ be semi-lopsided. Suppose that $f$ has a $c_{0}$ formal right inverse. Set $m=\tau(f),$ and let $\alpha\in\Z(G).$
\begin{enumerate}[(i)]
\item If $m>\sum_{g\in G\setminus\{1\}}\widehat{f}(g),$ then we may write $\alpha=\beta+cf$ where $\beta,c\in \Z(G),$ and $\|\widehat{\beta}\|_{\infty}\leq m-1.$ \label{I:nicer balanced case}
\item If $m=\sum_{g\in G\setminus\{1\}}\widehat{f}(g),$ then we may write $\alpha=\beta+cf$ where $\beta,c\in \Z(G),$ and $\widehat{\beta}(g)\in \{-m,\cdots,m-1\}$ for every $g\in G.$ Moreover, we may choose $\beta,c$ so that if $\widehat{\beta}(g)=-m,$ then $\widehat{\beta}(gs^{-1})< 0$ for every $s\in \supp(\widehat{f})\setminus\{1\}.$ \label{I:annoying balanced case}
\end{enumerate}

\end{lem}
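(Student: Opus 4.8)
The plan is to solve the equation $cf = \alpha$ over $\R$ using the given inverse, then round to integers and control the resulting remainder. Let $\gamma\in c_0(G)$ be the $c_0$ formal right inverse, so that $\gamma f = \delta_1$. Since $\alpha\in\Z(G)$ is finitely supported, $\psi := \alpha\gamma\in c_0(G)$, and the mixed associativity of convolution (valid because $\alpha,f$ are finitely supported and $\gamma$ is bounded) gives $\psi f = \alpha(\gamma f) = \alpha$. I would then define $c\in\Z(G)$ by rounding coordinatewise with the half-open convention $\widehat c(g) = \lfloor \psi(g)+\tfrac12\rfloor$, and set $\beta = \alpha - cf$. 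Because $\psi\in c_0(G)$, its coordinates are eventually in $(-\tfrac12,\tfrac12)$, so $\widehat c$ is finitely supported and $c,\beta\in\Z(G)$ as required. Writing $\epsilon = \psi-\widehat c$, we have $\epsilon(g)\in[-\tfrac12,\tfrac12)$ for all $g$ and $\widehat\beta = \epsilon f$, so that
\[\widehat\beta(g) = \sum_{s\in\supp(\widehat f)}\epsilon(gs^{-1})\widehat f(s) = \epsilon(g)m + \sum_{s\neq 1}\epsilon(gs^{-1})\widehat f(s).\]

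All the required estimates come from reading off this single identity, using $|\epsilon|\le\tfrac12$, the strict bound $\epsilon(g)<\tfrac12$, and semi-lopsidedness $\sum_{s\neq1}|\widehat f(s)|\le m$. For the upper bound, the diagonal term is strictly less than $m/2$ and the off-diagonal terms total at most $\tfrac12\sum_{s\neq1}|\widehat f(s)|\le m/2$, giving $\widehat\beta(g)<m$ in every case. For part (i), the lower bound $\widehat\beta(g)>-m$ follows either from $\sum_{s\neq1}|\widehat f(s)|<m$ (the strictly lopsided case) or, when $\sum_{s\neq1}|\widehat f(s)|=m$ while $\sum_{s\neq1}\widehat f(s)<m$, from the existence of a coefficient $\widehat f(s)<0$: the term $\epsilon(gs^{-1})\widehat f(s)$ is bounded strictly below by $\tfrac12\widehat f(s)$ since $\epsilon<\tfrac12$ is never attained. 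As $\widehat\beta(g)\in\Z$, the strict bounds $-m<\widehat\beta(g)<m$ give $\|\widehat\beta\|_\infty\le m-1$.

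For part (ii), the hypothesis $m=\sum_{s\neq1}\widehat f(s)$ together with semi-lopsidedness forces every off-diagonal coefficient to be positive, so the lower estimate sharpens to $\widehat\beta(g)\ge-\tfrac12\sum_{s\in\supp(\widehat f)}\widehat f(s)=-m$, now attainable; with the strict upper bound this yields $\widehat\beta(g)\in\{-m,\ldots,m-1\}$. The extra clause follows from the rigidity of the equality case: $\widehat\beta(g)=-m$ forces $\epsilon(gs^{-1})=-\tfrac12$ for every $s\in\supp(\widehat f)$. Then, fixing $s\in\supp(\widehat f)\setminus\{1\}$, the diagonal contribution to $\widehat\beta(gs^{-1})$ equals $\epsilon(gs^{-1})m=-m/2$, while the remaining terms are strictly below $\tfrac12\sum_{t\neq1}\widehat f(t)=m/2$, so $\widehat\beta(gs^{-1})<0$.

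I expect the main obstacle to be obtaining the sharp constant $m-1$ rather than $m$ in part (i): the naive estimate $|\widehat\beta(g)|\le\tfrac12\|\widehat f\|_1$ only gives $\le m$ in the balanced harmonic regime where $\|\widehat f\|_1=2m$, and closing the gap by one requires committing to the half-open rounding convention (so that $\epsilon(g)<\tfrac12$ is never attained) and carefully tracking which inequalities are strict. The same strictness bookkeeping, rather than any new idea, is what drives the delicate second clause of part (ii).
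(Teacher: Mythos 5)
Your proposal is correct and follows essentially the same route as the paper: decompose $\alpha\xi$ (your $\psi$) into its integer part $\widehat c$ and a remainder in $[-1/2,1/2)^{G}$, multiply back by $f$ to get $\widehat\beta=\epsilon f$, and then extract all bounds from that identity, with the strictness of $\epsilon<\tfrac12$ and the sign of a negative coefficient (in the non-lopsided subcase of (i)) or the forced equalities $\epsilon=-\tfrac12$ (in the clause of (ii)) doing exactly the work they do in the paper. No gaps; the case analysis and strictness bookkeeping match the paper's proof step for step.
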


\begin{proof}
Let $S=\supp(\widehat{f})\setminus\{1\}.$
Let $\xi$ be a $c_{0}$ formal right inverse of $f,$ since $\alpha\xi\in c_{0}(G,\R)$ we may write $\alpha\xi=x+\widehat{c}$ where $x\in c_{0}(G)\cap [-1/2,1/2)^{G}$ and $c\in \Z(G).$ Right multiplying by $f$ we have that
\[\widehat{\alpha}=xf+\widehat{cf},\]
and this shows that $xf\in c_{0}(G,\R)\cap \Z^{G}=c_{c}(G,\Z).$ So we may write $xf=\widehat{\beta}$ for some $\beta\in\Z(G).$ We show that $\beta$ has the desired properties in each case.

(\ref{I:nicer balanced case}):
This case is divide into two subcases. First, suppose that $f$ is lopsided, so $\sum_{s\in S}|\widehat{f}(s)|<m.$ Then for every  $g\in G,$
\[|\widehat{\beta}(g)|=\left|mx(g)-m\sum_{s\in S}x(gs^{-1})\widehat{f}(s)\right|\leq \frac{1}{2}\left(m+\sum_{s\in S}|\widehat{f}(s)|\right)<m.\]
So $\widehat{\beta}(g)\in (-m,m)\cap \Z=\{-(m-1),\cdots,m-1\}.$ Thus $\|\widehat{\beta}\|_{\infty}\leq m-1.$

If $f$ is not lopsided, then $\sum_{s\in S}|\widehat{f}(s)|=m,$ and the fact that $\sum_{s\in S}\widehat{f}(s)<m$ implies that we can choose an $s_{0}\in S$ so that $\widehat{f}(s_{0})<0.$
Fix $g\in G.$ Then
\[\widehat{\beta}(g)=mx(g)+\sum_{s\in S}x(gs_{0}^{-1})\widehat{f}(s)<\frac{m}{2}+\frac{1}{2}\sum_{s\in S}|\widehat{f}(s)|=m.\]
Additionally,
\[\widehat{\beta}(g)=mx(g)+x(gs_{0}^{-1})\widehat{f}(s_{0})+\sum_{s\in S,s\ne s_{0}}x(gs^{-1})\widehat{f}(s).\]
The fact that $\widehat{f}(s_{0})<0$ and $x(gs_{0}^{-1})\in [-1/2,1/2)$ implies that $x(gs^{-1})\widehat{f}(s_{0})>-\frac{1}{2}|\widehat{f}(s_{0})|.$ Thus
\[\widehat{\beta}(g)>-\frac{m}{2}-\frac{1}{2}|\widehat{f}(s_{0})|-\frac{1}{2}\sum_{s\in S,s\ne s_{0}}|\widehat{f}(s)|=-m.\]
So $-m<\widehat{\beta}(g)<m$ and as in the first subcase the fact that $\widehat{\beta}$ is integer valued implies that $\|\widehat{\beta}\|_{\infty}\leq m-1.$

(\ref{I:annoying balanced case}):
In this case, we must have that $\widehat{f}(s)>0$ for every $s\in S.$ Fix $g\in G.$ Since $\widehat{f}(s)>0$ for every $s\in S,$
\[\widehat{\beta}(g)=mx(g)+\sum_{s\in S}x(gs^{-1})\widehat{f}(s)<\frac{1}{2}\left(m+\sum_{s\in S}\widehat{f}(s)\right)=m.\]
Similarly,
\[\widehat{\beta}(g)\geq \frac{-m}{2}-\frac{1}{2}\sum_{s\in S}\widehat{f}(s)=-m.\]
So $\widehat{\beta}(g)\in [-m,m)\cap \Z=\{-m,\cdots,m-1\}.$ It simply remains to show that if $\widehat{\beta}(g)=-m,$ then $\widehat{\beta}(gs^{-1})<0$ for every $s\in S.$

So suppose that $\widehat{\beta}(g)=-m.$ Then
\[\widehat{\beta}(g)=mx(g)+\sum_{s\in S}x(gs^{-1})\widehat{f}(s).\]
Since $\widehat{f}(s)\geq 0$ for all $s\in S,$  $\sum_{s\in S}\widehat{f}(s)=m,$ and $x\in [-1/2,1/2)^{G},$ the fact that $\widehat{\beta}(g)=-m$ forces $x(gs^{-1})=-1/2$ for every $s\in S.$ Now fix $s\in S.$ Using that $\widehat{f}(t)>0$ for all $t\in S,$ and that $x\in [-1/2,1/2)^{G},$ we have:
\[\widehat{\beta}(gs^{-1})=-\frac{m}{2}+\sum_{t\in S}x(gs^{-1}t^{-1})\widehat{f}(t)<-\frac{m}{2}+\frac{1}{2}\sum_{t\in S}\widehat{f}(t)=0.\]
So $\widehat{\beta}(gs^{-1})<0.$

\end{proof}

We now explicitly discuss where orderability comes into play. We will work with something slightly more general than a left-invariant total order on $G.$

\begin{defn}
Let $G$ be a countable, discrete group. We say that a partial order $\preceq$ on $G$ is \emph{left-invariant} if whenever  $h_{1},h_{2}\in G$ with $h_{1}\preceq h_{2},$ then for all $g\in G$ we have that $gh_{1}\preceq gh_{2}.$
If there is a left-invariant total order on $G,$ then we say that $G$ is \emph{left-orderable}.
\end{defn}

For example, it is easy to exhibit a left-invariant  partial order on $\F_{2}=\ip{a,b}.$ Let $P$ be the set of elements of $\F_{2}$ which are products of $a,b$ (no $a^{-1},b^{-1}$ occur in its word decomposition), with the convention that $1\notin P.$ We can then define a partial order $\preceq$ by demanding that $h_{1}\preceq h_{2}$ if and only if $h_{1}^{-1}h_{2}\in P\cup\{1\}.$ This is a partial order on $\F_{2},$ and it makes the generators \emph{order positive} (i.e. larger than $1$ in this partial order). It is a fact that there is a left-invariant total order on $\F_{2}$ which make the generators order positive, but it is harder to construct.

\begin{lem}\label{L:getting divisibility}
Let $G$ be a countable, discrete group, and let $f\in \Z(G)$ be semi-lopsided, and set $m=\widehat{f}(1).$ Suppose that $f$ has an $\ell^{2}$ formal inverse $\xi$. Let $S=\supp(\widehat{f})\setminus\{1\},$ and assume that $H=\ip{S}$ has a left-invariant partial order so that $S\subseteq \{h\in H:h\succ 1\},$ and that $\ip{h_{1}^{-1}h_{2}:h_{1},h_{2}\in S}$ is infinite. If $\alpha\in \Z(G),$ but $\alpha\notin \Z(G)f,$ then there is a $g\in G$ so that $(\alpha\xi)(g)\in\frac{1}{m}\Z\cap \Z^{c}.$

\end{lem}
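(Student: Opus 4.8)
The plan is to convert the statement about $\alpha$ into a statement about the single vector $\eta:=\alpha\xi\in\ell^2(G,\R)$, and then run a descent in the left-invariant partial order inside the left coset of $H=\ip{S}$ that contains the relevant indices. First I would record the fundamental identity. Since $\xi$ is an $\ell^2$ formal inverse we have $\xi f=\delta_1$, and because $\alpha\in\Z(G)$ is finitely supported, associativity of convolution gives $\eta f=\alpha(\xi f)=\alpha\delta_1=\widehat\alpha$. Written out with the convention $(\eta f)(g)=\sum_h\eta(gh^{-1})\widehat f(h)$, this is the recursion
\[m\,\eta(g)+\sum_{s\in S}f_s\,\eta(gs^{-1})=\widehat\alpha(g)\qquad(g\in G),\]
where $m=\widehat f(1)>0$ and $f_s=\widehat f(s)$. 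Note $\eta\in\ell^2\subseteq c_0$, and that $\eta$ is not $\Z$-valued: otherwise $\eta\in\Z^G\cap\ell^2=c_c(G,\Z)$ equals $\widehat c$ for some $c\in\Z(G)$, and then $\widehat\alpha=\eta f=\widehat{cf}$ would force $\alpha=cf\in\Z(G)f$, contrary to hypothesis. Thus $T:=\{g:\eta(g)\notin\Z\}$ is non-empty.

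The heart of the matter is to locate $g_0\in T$ with $\eta(g_0)\in\tfrac1m\Z$; together with $g_0\in T$ this gives $\eta(g_0)=(\alpha\xi)(g_0)\in\tfrac1m\Z\cap\Z^c$, as desired. The mechanism is a maximum/descent principle for the potential $\rho(g):=\operatorname{dist}(\eta(g),\Z)$, driven by the order. Reducing the recursion modulo $\Z$ (legitimate since $\widehat\alpha(g)\in\Z$ and $f_s\in\Z$) yields
\[\operatorname{dist}(m\eta(g),\Z)\le\sum_{s\in S}|f_s|\,\rho(gs^{-1})\le m\max_{s\in S}\rho(gs^{-1}),\]
using $\sum_{s\in S}|f_s|\le m$ (semi-lopsidedness). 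The crucial order input is that $gs^{-1}\prec g$ for every $s\in S$: indeed $(gs^{-1})^{-1}g=s$ lies in the positive semigroup $P\supseteq S$, so each term on the left involves an index strictly below $g$ in the coset order, and since $P$ is a subsemigroup with $1\notin P$, iterating such descents produces genuinely distinct group elements.

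I would then argue by contradiction: suppose no such $g_0$ exists, i.e. $\eta(g)\in\tfrac1m\Z\Rightarrow\eta(g)\in\Z$ for all $g$. Then for every $g\in T$ we have $m\eta(g)\notin\Z$, so $\operatorname{dist}(m\eta(g),\Z)>0$, and the displayed inequality forces some $gs^{-1}\in T$ with $\rho(gs^{-1})\ge\operatorname{dist}(m\eta(g),\Z)/m$. Choosing $s$ to maximize $\rho(gs^{-1})$ defines an infinite chain $g_0\succ g_1\succ\cdots$ of distinct elements of $T$ inside one coset. The key quantitative observation is that whenever $\rho(g)\le\tfrac1{2m}$ the nearest integer to $m\eta(g)$ is $m$ times the nearest integer to $\eta(g)$, so $\operatorname{dist}(m\eta(g),\Z)=m\rho(g)$ exactly; hence along such ``small'' steps $\rho(g_{n+1})\ge\rho(g_n)$, i.e. $\rho$ is non-decreasing. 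Since the finite set $B:=\{g:\rho(g)>\tfrac1{2m}\}\subseteq\{g:|\eta(g)|>\tfrac1{2m}\}$ can be visited only finitely often (the chain is injective), the chain has an infinite tail on which $\rho$ is non-decreasing and bounded below by some $\epsilon_0>0$; this exhibits infinitely many distinct $g$ with $\rho(g)\ge\epsilon_0$, contradicting $\eta\in c_0$, which makes $\{g:\rho(g)\ge\epsilon_0\}\subseteq\{g:|\eta(g)|\ge\epsilon_0\}$ finite.

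The main obstacle — and the only place real care is needed — is exactly this termination/extremality step: the coset order is typically not well-founded (it can look like $(\Z,\le)$), so one cannot simply take a $\prec$-minimal element of $T$. What rescues the argument is the interplay between the arithmetic identity $\operatorname{dist}(m\eta(g),\Z)=m\rho(g)$ valid in the regime $\rho(g)\le\tfrac1{2m}$ and the $\ell^2$-decay of $\eta$: together they force any order-descent preserving non-integrality to preserve or increase the potential, and hence to contradict summability. If one prefers, invoking Lemma \ref{L:coefficents arent big} first to replace $\alpha$ by a representative $\beta$ with $\|\widehat\beta\|_\infty$ bounded — which changes $\eta$ only by a $\Z^G$-valued vector, hence changes neither $\rho$ nor membership in $\tfrac1m\Z\cap\Z^c$ — streamlines the bookkeeping around the exceptional finite set $B$, but the order-descent above is the essential content.
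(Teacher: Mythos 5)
Your proof is correct, but it takes a genuinely different route from the paper's. The paper first invokes Lemma \ref{L:coefficents arent big} to replace $\alpha$ by a representative modulo $\Z(G)f$ whose coefficients lie in $\{-m,\cdots,m-1\}$ (with the extra condition in the non-lopsided case), then chooses a $\preceq$-minimal element $g$ of the \emph{finite} set $\supp(\widehat{\alpha})$ --- so the well-foundedness worry you raise about your infinite set $T$ simply does not arise there --- and evaluates $(\alpha\xi)(g)$ exactly: writing $\xi=\frac{1}{m}\sum_{n\geq 0}x^{n}\delta_{1}$ via Lemma \ref{L:what is the inverse}, minimality of $g$ kills every term $(\alpha x^{n})(g)$ with $n\geq 1$, leaving $(\alpha\xi)(g)=\widehat{\alpha}(g)/m\in\frac{1}{m}\Z\cap\Z^{c}$. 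Your argument instead works from the single relation $\eta f=\widehat{\alpha}$ for $\eta=\alpha\xi$ and runs a maximum-principle descent: the exact identity $\operatorname{dist}(m\eta(g),\Z)=m\rho(g)$ in the regime $\rho(g)\leq\frac{1}{2m}$, the injectivity of the descending chain (products of elements of the positive semigroup are never the identity), and the finiteness of $\{g:|\eta(g)|\geq\varepsilon\}$ for $\eta\in c_{0}$ together produce the contradiction; I checked each of these steps and they hold. What your approach buys: it needs neither the coefficient-reduction lemma nor the geometric-series description of $\xi$, only that $\alpha\xi\in c_{0}$ and $(\alpha\xi)f=\widehat{\alpha}$, so it applies verbatim to any $c_{0}$ formal right inverse. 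What the paper's approach buys: it is considerably shorter and exhibits the witness $g$ together with the exact value $\widehat{\alpha}(g)/m$ explicitly.
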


\begin{proof}
Let $P=\{h\in H:h\succ 1\},$ since $\preceq$ is a partial order on $H$ we have that $P\cap P^{-1}=\varnothing.$ Since $P\cap P^{-1}=\varnothing,$ we may then extend the partial order on $H$ to a partial order on $G$ by saying that $g\preceq h$ if $g^{-1}h\in P\cup\{1\}.$ It is easy to see that this partial order on $G$ is left-invariant so we may assume, without loss of generality, that $G$ has a left-invariant partial order which makes the elements of $S$ positive.

Note that if $\alpha-\beta\in \Z(G)f,$ and if $g\in G$ satisfies $(\beta\xi)(g)\in \frac{1}{m}\Z\cap \Z^{c},$ then $(\alpha\xi)(g)\in\frac{1}{m}\Z\cap \Z^{c}.$
So by Lemma \ref{L:coefficents arent big} we may, and will, assume that one of the following two cases hold. Either
\begin{enumerate}[(a)]
    \item $\|\widehat{\alpha}\|_{\infty}\leq m-1,$ or \label{I:control of coeff good cases}
    \item $\widehat{\alpha}(g)\in \{-m,\cdots,m-1\}$ for every $g\in G.$ Further, if $\widehat{\alpha}(g)=-m,$ then $\widehat{\alpha}(gs^{-1})<0$ for every $s\in S.$ \label{I:control of coeff bad cases}
\end{enumerate}
Let $g$ be an element of $\supp(\alpha)$ which is minimal with respect to this partial order. We make the following claim.

\emph{Claim: $|\widehat{\alpha}(g)|\leq m-1.$}

To prove the claim, we first note that if (\ref{I:control of coeff good cases}) holds then the claim is trivial, so we may assume that (\ref{I:control of coeff bad cases}) holds. Note that $(gs^{-1})^{-1}g=s\in P$ for every $s\in S.$ So $gs^{-1}\prec g$ and $gs^{-1}\ne g$ for every $s\in S.$ So by minimality of $g$ we must that $\widehat{\alpha}(gs^{-1})=0$ for every $s\in S.$ So (\ref{I:control of coeff bad cases}) now implies that $|\widehat{\alpha}(g)|\leq m-1.$

We now return to the proof of the lemma. Let $x=-\frac{1}{m}\sum_{s\in S}\widehat{f}(s)s,$ so $f=m(1-x).$ Then, by Lemma \ref{L:what is the inverse} we have that
\[(\alpha\xi)(g)=\frac{\widehat{\alpha}(g)}{m}+\frac{1}{m}\sum_{n=1}^{\infty}(\alpha x^{n})(g).\]
Fix $n\geq 1,$ then
\[(\alpha x^{n})(g)=\sum_{s\in S^{n}}\alpha(gs^{-1})x^{n}(s).\]
Let $s\in S^{n}.$ Then $s\in P$ and $s\ne 1$ since $S\subseteq \{h\in H:h\succ 1\}.$ Thus $(gs^{-1})^{-1}g=s\in P.$ So $gs^{-1}\preceq g$ and $gs^{-1}\ne g,$ since $s\ne 1.$ By minimality we thus have that $\alpha(gs^{-1})=0$ for all $s\in S^{n},$ and thus $(\alpha x^{n})(g)=0$ for all $n\geq 1.$ So by the claim,
\[(\alpha\xi)(g)=\frac{\widehat{\alpha}(g)}{m}\in \left\{-\frac{(m-1)}{m},-\frac{(m-2)}{m},\cdots,\frac{m-1}{m}\right\}.\]
Since $g\in \supp(\widehat{\alpha}),$ it follows that
\[(\alpha\xi)(g)\in \left\{-\frac{(m-1)}{m},-\frac{(m-2)}{m},\cdots,\frac{m-1}{m}\right\}\setminus \{0\}\subseteq \frac{1}{m}\Z\cap \Z^{c}.\]

\end{proof}

\begin{cor}\label{C:factor of a Bernoulli shift main one}
Let $G$ be a countable, discrete group, and let $f\in \Z(G)$ be semi-lopsided. Suppose that $f$ has an $\ell^{2}$ formal inverse. Suppose that $S=\supp(\widehat{f})\setminus\{1\},$ and that $H=\ip{S}$ has a left-invariant partial order $\preceq$ so that $S\subseteq \{h\in H:h\succ 1\},$ and that $\ip{h_{1}^{-1}h_{2}:h_{1},h_{2}\in S}$ is infinite. Then $G\actson (X_{f},m_{X_{f}})$ is a factor of a Bernoulli shift.
\end{cor}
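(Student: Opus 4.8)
The plan is to assemble the pieces already established in the excerpt into a single factor map. The key object is the $\ell^2$ formal inverse $\xi$ of $f$, which exists by hypothesis. Fix a probability measure $\nu \in \Prob(\Z)$ with mean zero and finite second moment, to be chosen carefully below, and apply Theorem \ref{T: background convolution} to form $\Theta_{\xi} \in \Meas(\R^G, \nu^{\otimes G}, \T^G)$ and the pushforward measure $\mu_{\xi} = (\Theta_{\xi})_*(\nu^{\otimes G})$. Since $\xi$ is an $\ell^2$ formal inverse of $f$, the Corollary following Theorem \ref{T: background convolution} tells us $\mu_{\xi}$ is supported on $X_f$. Moreover $\Theta_{\xi}$ intertwines the Bernoulli shift $G \actson (\R^G, \nu^{\otimes G})$ with $G \actson \T^G$ (this equivariance is built into the construction in \cite{MeMaxMinWC}), so $G \actson (X_f, \mu_{\xi})$ is a factor of a Bernoulli shift. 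The entire game is thus to show $\mu_{\xi} = m_{X_f}$.

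To identify $\mu_{\xi}$ with Haar measure, I would compare Fourier transforms over the dual group $\Z(G)$. Recall $\widehat{m_{X_f}} = 1_{\Z(G)f}$, the indicator of the submodule $\Z(G)f$. For $\alpha \in \Z(G)f$ we automatically get $\widehat{\mu}_{\xi}(\alpha) = 1$ since $\mu_{\xi}$ lives on $X_f$, so the content is to prove $\widehat{\mu}_{\xi}(\alpha) = 0$ whenever $\alpha \in \Z(G) \setminus \Z(G)f$. By the product formula in Theorem \ref{T: background convolution},
\[
\widehat{\mu}_{\xi}(\alpha) = \prod_{g \in G} \widehat{\nu}((\alpha\xi)(g)),
\]
an absolutely convergent product, so it suffices to produce a single $g_0 \in G$ with $\widehat{\nu}((\alpha\xi)(g_0)) = 0$. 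This is exactly where Lemma \ref{L:getting divisibility} enters: its hypotheses match those of the corollary verbatim (semi-lopsided $f$ with an $\ell^2$ formal inverse, $H = \ip{S}$ carrying a left-invariant partial order making $S$ positive, and $\ip{h_1^{-1}h_2}$ infinite), and its conclusion furnishes a $g_0 \in G$ with $(\alpha\xi)(g_0) \in \frac{1}{m}\Z \cap \Z^c$, where $m = \tau(f)$.

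It then remains to choose $\nu$ so that $\widehat{\nu}$ vanishes on $\frac{1}{m}\Z \cap \Z^c$. The explicit measures displayed in the paragraph preceding Lemma \ref{L:coefficents arent big} do precisely this: taking $\nu = u_{\{-k,\dots,k\}}$ when $m = 2k+1$ is odd and $\nu = u_{\{-m,\dots,-1\}} * u_{\{1,\dots,m\}}$ when $m$ is even yields a mean-zero, finite-second-moment measure on $\Z$ with $\widehat{\nu} = 0$ on $(\frac{1}{m}\Z) \cap \Z^c$. With this choice $\widehat{\nu}((\alpha\xi)(g_0)) = 0$, the product collapses to $0$, and we conclude $\widehat{\mu}_{\xi} = 1_{\Z(G)f} = \widehat{m_{X_f}}$, hence $\mu_{\xi} = m_{X_f}$ by uniqueness of Fourier transforms. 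This shows $G \actson (X_f, m_{X_f})$ is a factor of the Bernoulli shift $G \actson (\R^G, \nu^{\otimes G})$.

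I expect essentially no obstacle here beyond bookkeeping, since all the hard analysis is packaged in Theorem \ref{T: background convolution} (the $\ell^2$ extension of convolution) and in Lemma \ref{L:getting divisibility} (the orderability argument locating a non-integer coefficient with controlled denominator). The one point requiring genuine care is matching the denominator bound: Lemma \ref{L:getting divisibility} gives a value in $\frac{1}{m}\Z \cap \Z^c$, so I must verify that the chosen $\nu$ annihilates \emph{all} of $\frac{1}{m}\Z \cap \Z^c$ and not merely some reciprocals, which is why the precise form of $\nu$ (and its dependence on the parity of $m$) matters. A secondary check is confirming that the measurability/equivariance properties of $\Theta_{\xi}$ indeed exhibit the action as a genuine factor (not merely a joining), but this is immediate from the construction in \cite{MeMaxMinWC}.
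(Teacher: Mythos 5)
Your proposal is correct and follows essentially the same route as the paper: push forward $\nu^{\otimes G}$ under $\Theta_{\xi}$, invoke Lemma \ref{L:getting divisibility} to locate a coefficient in $\frac{1}{m}\Z\cap\Z^{c}$, and choose $\nu$ (split by the parity of $m$) so that $\widehat{\nu}$ vanishes there; your even-$m$ measure $u_{\{-m,\dots,-1\}}*u_{\{1,\dots,m\}}$ is just a translate of the paper's $\eta^{*}*\eta$ with $\eta=u_{\{0,\dots,m-1\}}$ and works identically. The only cosmetic difference is that you deduce $\widehat{\mu}_{\xi}=1$ on $\Z(G)f$ from the support statement, whereas the paper computes $\alpha\xi\in c_{c}(G,\Z)$ directly; both are fine.
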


\begin{proof}
Let $\xi$ be the $\ell^{2}$ formal inverse to $f,$ and $m=\tau(f).$
First assume that $m$ is an odd integer, and write $m=2k+1.$ We can let $\Theta_{\xi}\colon \{-k,\cdots,k\}^{G}\to \T^{G}$ be defined as in Theorem \ref{T: background convolution}  corresponding to $\nu=u_{\{-k,\cdots,k\}}.$ Let $\mu=(\Theta_{\xi})_{*}(\nu^{\otimes G}).$ So for every $\alpha\in \Z(G):$
\[\widehat{\mu}(\alpha)=\prod_{g\in G}\widehat{\nu}((\alpha\xi)(g))=\prod_{g\in G}\frac{\sin((2k+1)\pi(\alpha\xi)(g))}{(2k+1)\sin(\pi(\alpha\xi)(g))}.\]
Suppose $\alpha\in \Z(G)f,$ and write $\alpha=\beta f.$ Then $\alpha\xi=\widehat{\beta}\in c_{c}(G,\Z).$ Hence we have $(\alpha\xi)(g)\in \Z$ for all $g\in G.$ Since $\widehat{\nu}(l)=1$ for every $l\in \Z,$ we have that $\widehat{\mu}(\alpha)=1.$ Suppose that $\alpha\in \Z(G),$ but $\alpha\notin \Z(G)f.$ Then by Lemma \ref{L:getting divisibility}, there is some $g\in G$ so that $(\alpha\xi)(g)\in \frac{1}{m}\Z \cap \Z^{c}.$ So $\widehat{\mu}((\alpha\xi)(g))=0$ (since $\widehat{\nu}(t)=\frac{\sin((2k+1)\pi t)}{(2k+1)\sin(\pi t))}$), and thus $\widehat{\mu}(\alpha)=0.$ Hence we know that $\widehat{\mu}=1_{\Z(G)f},$ and this is equivalent to saying that $\mu=m_{X_{f}}.$

Now assume that $m$ is even. Let $\eta=u_{\{0,\cdots,m-1\}},$ and set $\nu=\eta^{*}*\eta.$ Observe that $\mu$ is a measure on $\{-(m-1),\cdots,m-1\},$ and since $\widehat{\nu}=|\widehat{\eta}|^{2}$ we have that
\[\widehat{\nu}(t)=\begin{cases}
1,& \textnormal{ if $t\in \Z$}\\
\left|\frac{\sin(m\pi t)}{m\sin(\pi t)}\right|^{2},& \textnormal{ if $t\notin \Z.$}
\end{cases}\]
Moreover, it is direct to check that $\nu$ has mean zero. Let $\Theta_{\xi}\colon \{-(m-1),\cdots,m-1\}^{G}\to \T^{G}$ be defined as in Theorem \ref{T: background convolution}  for this $\nu$, and set $\mu=(\Theta_{\xi})_{*}(\mu^{\otimes G}).$ Then
\[\widehat{\mu}(\alpha)=\prod_{g\in G}\widehat{\nu}((\alpha\xi)(g))\]
for all $\alpha\in \Z(G).$
First suppose that $\alpha\in \Z(G)f.$ Then as in the case that $m$ is odd, we know that $\alpha\xi\in c_{c}(G,\Z),$ and thus $\widehat{\mu}(\alpha)=1.$ Now assume that $\alpha\in \Z(G),$ but that $\alpha\notin \Z(G)f.$ As in the  case that $m$ is odd, we may find a $g\in G$ so that $(\alpha\xi)(g)\in \frac{1}{m}\Z\cap \Z^{c}.$ For such a $g$ we have that $\widehat{\nu}((\alpha\xi)(g))=0,$ and thus $\widehat{\mu}(\alpha)=0.$ So $\widehat{\mu}=1_{\Z(G)f},$ and thus $\nu=m_{X_{f}}.$

\end{proof}

\begin{cor}\label{C:factor of Bern special case}
Let $G$ be a countable, discrete group, and  $f\in \Z(G)$ be semi-lopsided. Suppose that there is a left-invariant partial order $\preceq$ on the group so that $\supp(\widehat{f})\setminus \{1\}\subseteq \{g\in G:g\succ 1\}.$ Assume that $\ip{a^{-1}b:a,b\in \supp(\widehat{f})\setminus\{1\}}$ is infinite.  Set $H=\ip{\supp(\widehat{f})\setminus\{1\}}.$ Suppose that one of the following three cases hold:
\begin{enumerate}[(i)]
\item either $f$ is lopsided, or
\item  $H$ has super-polynomial growth, or
\item $H$ has polynomial growth of degree at least $5.$
\end{enumerate}
Then $G\actson (X_{f},m_{X_{f}})$ is a factor of a Bernoulli shift.

\end{cor}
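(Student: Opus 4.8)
The plan is to deduce this corollary directly by combining Corollary \ref{C:well balanced superpoly growth inverses} (which manufactures an $\ell^{2}$ formal inverse) with Corollary \ref{C:factor of a Bernoulli shift main one} (which converts such an inverse into a factor map from a Bernoulli shift). Almost all of the work is in checking that the hypotheses line up, so the argument should be short.

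First I would reconcile the two slightly different ``infinite group'' conditions. Set $S=\supp(\widehat{f})\setminus\{1\}$. Corollary \ref{C:factor of a Bernoulli shift main one} requires $\ip{h_{1}^{-1}h_{2}:h_{1},h_{2}\in S}=\ip{S^{-1}S}$ to be infinite, which is exactly the standing hypothesis. Corollary \ref{C:well balanced superpoly growth inverses} instead requires $(S\cup S^{-1})^{2}$ to generate an infinite group; but since $S^{-1}S\subseteq (S\cup S^{-1})^{2}$, we have $\ip{S^{-1}S}\subseteq \ip{(S\cup S^{-1})^{2}}$, so infiniteness of the former forces infiniteness of the latter. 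Thus the single assumption that $\ip{a^{-1}b:a,b\in S}$ is infinite feeds both corollaries.

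Next I would produce the $\ell^{2}$ formal inverse. If $f$ is lopsided (case (i)), then Corollary \ref{C:well balanced superpoly growth inverses}(\ref{I:lopsided inverses easy case no growth}) gives an $\ell^{1}$ formal inverse $\xi$; since $\ell^{1}(G,\R)\subseteq \ell^{2}(G,\R)$, this $\xi$ is in particular an $\ell^{2}$ formal inverse. If $f$ is not lopsided, then by hypothesis one of (ii), (iii) holds, so $H=\ip{S}$ has super-polynomial growth or polynomial growth of degree at least $5$; since $f$ is semi-lopsided but not lopsided, Corollary \ref{C:well balanced superpoly growth inverses}(\ref{I:semi-lopsided inverses case with growth}) then yields an $\ell^{2}$ formal inverse. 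In every case $f$ admits an $\ell^{2}$ formal inverse.

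Finally I would invoke Corollary \ref{C:factor of a Bernoulli shift main one}. The given left-invariant partial order on $G$ restricts to a left-invariant partial order on $H=\ip{S}$ under which $S\subseteq\{h\in H:h\succ 1\}$, the element $f\in\Z(G)$ is semi-lopsided with an $\ell^{2}$ formal inverse, and $\ip{S^{-1}S}$ is infinite; these are precisely the hypotheses of that corollary, whose conclusion is that $G\actson(X_{f},m_{X_{f}})$ is a factor of a Bernoulli shift. The only genuinely delicate point is the bookkeeping with the two infinite-group conditions in the first step; everything else is a direct citation.
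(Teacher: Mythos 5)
Your proposal is correct and is exactly the argument the paper intends: the corollary is stated as an immediate consequence of Corollary \ref{C:well balanced superpoly growth inverses} (producing the $\ell^{1}$, hence $\ell^{2}$, formal inverse in the lopsided case and the $\ell^{2}$ formal inverse under the growth hypotheses) together with Corollary \ref{C:factor of a Bernoulli shift main one}. Your bookkeeping of the two infinite-group conditions via $S^{-1}S\subseteq (S\cup S^{-1})^{2}$ and the restriction of the partial order to $H$ are both the right (and routine) checks.
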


As we will see in the next section, our assumptions imply that the kernel $N$ of $G\actson X_{f}$ is finite, and that $G/N\actson (X_{f},m_{X_{f}})$ is essentially free. Thus, by \cite{OrnWeiss}, if $G$ is amenable,  then we can say that $G/N\actson (X_{f},m_{X_{f}})$ is a Bernoulli shift.

\section{Sample Applications}\label{S:applications}

In this section, we shall give many examples of groups $G$ and semi-lopsided elements $f\in\Z(G)$ so that $G\actson (X_{f},m_{X_{f}})$ is a factor of a Bernoulli shift.  We have divided this section into subsections: one where the examples have the acting group amenable, and one where it is not (the second section should maybe be titled ``potentially nonamenable groups" as it includes Thompson's group and the question of whether or not it is amenable is open). This is because in the amenable case we can apply Ornstein theory to show that this actions are in fact Bernoulli (once we argue that these actions are essentially free). While we cannot do this in the nonamenable case (there is a counterexample due to Popa in \cite[Corollary 2]{PopaCohomologyOE}, see also \cite{AustinGeo}), the fact that these actions are factors of Bernoulli shifts still has lots of interesting consequences such as showing that they have completely positive entropy, are solidly ergodic,  are mixing of all orders, and have countable Lebesgue spectrum.

Most of examples will have the acting group be left-orderable. In this case, the assumption that $\ip{a^{-1}b:a,b\in \supp(\widehat{f})\setminus\{1\}}$ is infinite is always satisfied except in the case that $\supp(\widehat{f})$ has size $2.$  In both the amenable and nonamenable case we will also give examples of groups which are not torsion-free for which we can explicitly write down left-invariant partial orders, and this  allow us to give nice examples of principal algebraic actions  which are factors of Bernoulli shifts. Typically in this situation, the assumption that $\ip{a^{-1}b:a,b\in \supp(\widehat{f})\setminus\{1\}}$ is infinite is also straightforward, and we will not explicitly prove it except in the cases where it is slightly less obvious.

For amenable $G,$ demanding that $G$ be left-orderable and finitely generated implies that $G$ is locally indicable (i.e. every finitely-generated subgroup of $H$ has a surjective homomorphism onto $\Z$), by \cite{MorrisLeftOrd}. So this rules out some possibilities for $G,$ for example it cannot be simple.  By results of \cite{HydeLodLeftOrd}, there exists a continuum size collection of pairwise nonisomorphic, simple, nonamenable, finitely generated, left-orderable groups (this follows up previous related work in \cite[Theorem 1.7]{KimKobLodLeftOrd}), and so nonamenable left-orderable groups can be simple. We do not know if there are examples of simple, finitely generated, amenable, groups $G$ which have a positive semigroup $P$ so that $\ip{P}=G$. We will give an example later of a group $G,$ so that $\ip{P}\ne G$ for every positive semigroup $P\subseteq G,$ but so that it has a left-orderable subgroup of finite-index. So it is not always the case that a group can be generated by a positive semigroup, even if it has a ``large" left-orderable subgroup.

In each case the elements $f\in\Z(G)$ we construct will have the property that $\supp(\widehat{f})$ generates $G,$ even though this is not necessary to apply Corollary \ref{C:factor of Bern special case}. There are three primary reasons for this. The first is that, in the semi-lopsided case, it makes it more transparent that $\ip{\supp(\widehat{f})}$ has fast enough growth. Of course this is not an issue if we stick to lopsided elements, but this removes the harmonic model examples which are quite interesting for their connection with random walks. The second two reasons are as follows: suppose we take $f\in\Z(G),$ and let $H$ be the group generated by the support of $\widehat{f}.$ For clarity, we let $X_{f,H}$ be the Pontryagin dual of $\Z(H)/\Z(H)f,$ and $X_{f,G}$ be the Pontryagin dual of $\Z(G)/\Z(G)f.$ Then $G\actson (X_{f,G},m_{X_{f,G}})$ is the \emph{coinduced action} of $H\actson (X_{f,H},m_{X_{f,H}})$ (see \cite[Section 6]{Me5} for the terminology and a proof of this). The coinduction of a Bernoulli shift is a Bernoulli shift, and a factor map between $H$-actions functorially induces a factor map between the corresponding $G$-actions. Thus if $H\actson (X_{f,H},m_{X_{f,H}})$ is a Bernoulli shift (respectively a factor of a Bernoulli shift), then $(G\actson (X_{f,G},m_{X_{f,G}})$ will be a Bernoulli shift (respectively a factor of a Bernoulli shift). This gives us two more reasons to restrict to the case $\supp(\widehat{f})=G.$ The first is that we can do so without loss of generality, once we know the case when the support of $\widehat{f}$ generates $G,$ the case when it does not follows by a simple application of the coinduction construction. The second is that while we can create examples where $G\actson (X_{f},m_{X_{f}})$ is a Bernoulli shift and $\ip{\supp(\widehat{f})}$ is not all of $G,$ many of these will be factors of Bernoulli (or factors of Bernoulli) for not very interesting reasons. For example, we can view $\Z$ inside of $\F_{2}$ (in any number of ways). Certainly we know of many  $f\in \Z(\Z)$ for which $G\actson (X_{f,\Z},m_{X_{f,\Z}})$ is Bernoulli, (e.g. by \cite{KatzErgAut, LindErgAut, RudolphSchmidtCPE} this is true as long as it is ergodic) and so by coinduction we know that $\F_{2}\actson (X_{f,\F_{2}},m_{X_{f,\F_{2}}})$ is Bernoulli. Since the group structure of $\F_{2}$ does not enter in a nontrivial way in the proof that $\F_{2}\actson (X_{f,\F_{2}},m_{X_{f,\F_{2}}})$ is Bernoulli, examples constructed in this manner are not very satisfactory. Similar remarks apply to any other group with infinite amenable subgroups. Of course, once one exhibits a set of generators for the group, then it is easy to construct several other sets of generators. In most of the examples we give we will typically only consider $f\in\Z(G)$ so that $\supp(\widehat{f})=\{1\}\cup S$ where $S$ are some ``well known" generators of the group. We will leave it to the reader to modify these sets of generators and create many more examples of Bernoulli (or factor of Bernoulli) principal algebraic actions. There are certainly an endless number of ways of doing this.

We remark that once we demand that $\ip{\supp(\widehat{f})}=G$ the assumption that there is a left-invariant partial order $\preceq$ on $G$ so that $\supp(\widehat{f})$ is contained in the corresponding positive semigroup becomes an actual restriction on the group. Of course, such a group cannot be torsion. But even if $G$ has a finite-index, left-orderable subgroup, it is not necessarily the case that $G$ has a positive semigroup $P$ with $\ip{P}=G.$  For example, consider the unique nontrivial action $\Z/2\Z\actson \Z$ by automorphisms, and set $G=\Z\rtimes \Z/2\Z.$ There is no positive semigroup $P\subseteq G$ with the property that $\ip{P}=G.$ This is because any positive semigroup $P\subseteq G$ must have no torsion elements, and this forces $P\subseteq \Z.$ So our methods do not apply to any principal algebraic action of this group.

\subsection{The Amenable Case}

Corollary \ref{C:factor of Bern special case} is most striking when $G$ is amenable, since in this case being a factor of a Bernoulli shift implies that the action \emph{is} a Bernoulli shift (at least when one quotients by the kernel of the action). This is a consequence of Ornstein theory. However, as Ornstein theory only applies to free actions of groups we should first observe that the actions we are considering are free after modding out by the kernel.

\begin{prop}\label{P:free for free}
Let $G$ be a countable, discrete group and suppose that $f\in \Z(G)$ has a $c_{0}$ formal inverse. If $N$ is the kernel of the action $G\actson X_{f},$ then $N$ is finite and $G/N\actson (X_{f},m_{X_{f}})$ is essentially free.

\end{prop}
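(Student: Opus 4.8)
The plan is to pass to the Pontryagin dual side and reduce everything to the structure of the module $M=\Z(G)/\Z(G)f$, on which $G$ acts by left multiplication, so that $X_{f}=\widehat{M}$ with the shift action. Under this duality an element $g$ acts trivially on $X_{f}$ exactly when $(g-1)M=0$, so $N=\{g\in G:(g-1)M=0\}$; and for any $g$ the fixed-point set $\Fix(g)\subseteq X_{f}$ is the annihilator of $(g^{-1}-1)M$, whence $\Fix(g)$ is a closed subgroup with $X_{f}/\Fix(g)\cong\widehat{(g^{-1}-1)M}$. Since a closed subgroup of a compact group has positive Haar measure iff it is open iff it has finite index, we get $m_{X_{f}}(\Fix(g))>0$ iff $(g^{-1}-1)M$ is finite. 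Thus both assertions reduce to controlling when $(g-1)M$ is finite: ``$N$ finite'' and ``$G/N\actson(X_{f},m_{X_{f}})$ essentially free'' (i.e.\ each $\Fix(g)$ with $g\notin N$ is null) will both follow once this is understood.

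The main tool will be the formal inverse $\alpha$, which in all of our applications is an $\ell^{2}$ inverse and hence two-sided, $f\alpha=\alpha f=\delta_{1}$. Using $f\alpha=\delta_{1}$ one checks that $\bar c\mapsto\overline{c\alpha}$ (reduction of $c\alpha$ modulo $\Z$) is a well-defined, injective, $G$-equivariant homomorphism $\iota\colon M\hookrightarrow\T^{G}$ with image in $c_{0}(G,\T)$: it is well-defined because $c\in\Z(G)f$ forces $c\alpha\in\Z(G)$; injective because $\overline{c\alpha}=0$ means $c\alpha\in\Z(G)$, and right multiplication by $f$ then gives $c\in\Z(G)f$; and equivariant because left translation commutes with reduction mod $\Z$. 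In particular $\iota$ carries each finite subgroup of $M$ into the finitely supported, $\Q/\Z$-valued elements of $\T^{G}$. Write $\bar\alpha:=\iota(\bar 1)=\overline{\alpha}\in c_{0}(G,\T)$; note $\bar\alpha\neq 0$ unless $\alpha\in\Z(G)$, i.e.\ unless $f$ is a unit of $\Z(G)$ (a degenerate case in which $X_{f}$ is a point), which I exclude.

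I would then prove that $N$ is finite as follows. If $g\in N$ then $g\bar 1=\bar 1$ in $M$, so by equivariance $g\bar\alpha=\iota(g\bar 1)=\iota(\bar 1)=\bar\alpha$; hence $N\subseteq\Stab(\bar\alpha)$ for the shift action on $\T^{G}$. Since $\bar\alpha\in c_{0}(G,\T)$ is nonzero, the set $A$ of coordinates attaining $\max_{x}d_{\T}(\bar\alpha(x),0)$ is finite and nonempty, and any $g$ fixing $\bar\alpha$ satisfies $g^{-1}A=A$; fixing $x_{0}\in A$ gives $g\in x_{0}A^{-1}$, a finite set. Therefore $\Stab(\bar\alpha)$, and a fortiori $N$, is finite.

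Finally, for essential freeness I would show that $(g-1)M$ finite forces $(g-1)M=0$, so that $\Fix(g)$ is null for every $g\notin N$. Suppose $(g-1)M$ is finite, of exponent $n$. Since $\iota((g-1)M)$ is then a finite group of exponent $n$, we get $\overline{n(g-1)c\alpha}=0$ for all $c\in\Z(G)$, i.e.\ $(g-1)c\,(n\bar\alpha)=0$ in $\T^{G}$; taking $c=h$ shows every translate $h\cdot(n\bar\alpha)$ is fixed by $g$, which is equivalent to $n\bar\alpha$ being invariant under right translation by every conjugate $y^{-1}g^{-1}y$. If $g$ has infinite order then any one such conjugate $z$ has infinite order, and right-translation invariance of $n\bar\alpha\in c_{0}(G,\T)$ under $z$ makes it constant on the infinite cosets of $\langle z\rangle$, forcing $n\bar\alpha=0$; but then $\alpha\in\tfrac1n\Z(G)$ is finitely supported, so $f$ is invertible in $\Q(G)$, contradicting that $f$ is not a $\Q(G)$-unit (which holds in our setting, e.g.\ by orderability of $G$ once $\supp(\widehat f)$ is not a singleton). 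Hence $(g-1)M$ is infinite for every infinite-order $g\neq 1$, which already completes the proof when $G$ is torsion-free — in particular for every left-orderable $G$ — yielding $N=\{1\}$ and a genuinely free action. \emph{The main obstacle is the case of finite-order $g\notin N$}, which simply does not occur for torsion-free $G$; for the groups with torsion treated later one handles it by the same right-invariance argument, now showing that $n\bar\alpha$ is fixed under the (necessarily infinite) normal closure of $g$ and again contradicting the non-invertibility of $f$ over $\Q(G)$.
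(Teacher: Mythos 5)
Your route is genuinely different from the paper's. The paper disposes of both claims in two sentences: a $c_{0}$ formal inverse forces $G\actson (X_{f},m_{X_{f}})$ to be mixing (quoting \cite{BowenLi}), mixing makes the kernel finite, and mixing actions are essentially free modulo the kernel by Tucker-Drob \cite{RobinAF} (whose proof uses Feit--Thompson). Your Pontryagin-dual reduction of both claims to the finiteness of $(g-1)M$ for $M=\Z(G)/\Z(G)f$, and the observation that $N\subseteq \Stab(\bar{\alpha})$ is finite because a nonzero element of $c_{0}(G,\T)$ has finite shift-stabilizer, are elementary and self-contained; that half of your argument is correct and is an attractive alternative to the mixing argument (modulo the small point that you need $f\alpha=\delta_{1}$ as well as $\alpha f=\delta_{1}$ for $\iota$ to be well defined, which is automatic for $\ell^{1}$ or $\ell^{2}$ formal inverses but is not part of the definition of a $c_{0}$ formal right inverse).

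The essential-freeness half has a genuine gap. Your reductio terminates in ``$n\bar{\alpha}=0$, hence $\alpha\in\tfrac{1}{n}\Z(G)$ and $f$ is invertible in $\Q(G)$,'' which you then declare a contradiction; but non-invertibility of $f$ in $\Q(G)$ is not among the hypotheses of the proposition. The proposition is stated for an arbitrary $f\in\Z(G)$ with a $c_{0}$ formal inverse, and the paper immediately applies this circle of ideas to $f=1+b\sum_{n\in K}n$ for $K$ a nontrivial finite normal subgroup --- an element that \emph{is} invertible in $\Q(G)$ (as is $f=2$, whose action is the Bernoulli shift over $\Z/2\Z$). So what you have actually proved is the implication ``$(g-1)M$ finite and $g$ of infinite order $\Rightarrow n\alpha\in\Z(G)$,'' and you still owe an argument that this cannot coexist with $(g-1)M\neq 0$; invoking orderability or the shape of $f$ ``in our setting'' imports hypotheses the proposition does not carry. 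Separately, the torsion case is not closed: the claim that the normal closure of a finite-order $g\notin N$ is ``necessarily infinite'' is false in general (take $G=H\times K$ with $K$ finite nontrivial and $g\in K$; the normal closure of $g$ is contained in $K$). Since groups with torsion are precisely the case for which the paper needs this proposition (e.g.\ Example~\ref{E:hard wreath product example}), and precisely where it leans on \cite{RobinAF}, this is not a removable edge case but the heart of the matter.
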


\begin{proof}
As was pointed out in \cite{BowenLi}, the fact that $f$ has a $c_{0}$ formal inverse implies that $G\actson (X_{f},m_{X_{f}})$ is mixing. This makes it obvious that $N$ is finite. Additionally, the fact that $G\actson (X_{f},m_{X_{f}})$ is mixing forces $G/N\actson (X_{f},m_{X_{f}})$ to be essentially free by \cite{RobinAF}.

\end{proof}

So in the amenable case, we can always conclude that if $G\actson (X_{f},m_{X_{f}})$ is a factor of a Bernoulli shift.
In most of our examples, $G$ is additionally torsion-free. In this case, we have that $N$ is trivial, i.e. the algebraic action is faithful. If $G$ is assumed torsion-free, there is  a more elementary proof that $G\actson (X_{f},m_{X_{f}})$ is essentially free by appealing to \cite[Appendix A]{MeMaxMinWC}. This does not use the Feit-Thompson theorem as in \cite{RobinAF}. By Proposition \ref{P:free for free} and the extension of Ornstein theory to amenable groups (see \cite{OrnWeiss}), if $G$ is amenable and $G\actson (X_{f},m_{X_{f}})$ is a factor of a Bernoulli shift, and if $N$ is the kernel of the action $G\actson X_{f},$ then $G/N\actson (X_{f},m_{X_{f}})$ is isomorphic to a Bernoulli shift (this follows from \cite[III.4 Proposition 1, III.6 Theorem 2, III.5 Corollary 5]{OrnWeiss}). We will give several examples of Bernoulli principal algebraic actions of amenable groups shortly, but let us first point out explicitly here that this annoyance with finite normal subgroups can occur.

\begin{prop}
 Let $G$ be a countable, discrete, group and let $N\triangleleft G$ be finite with $N\ne\{1\}.$ Let $b$ be an integer, and let
 \[f=1+b\sum_{n\in N}n.\]
 Then $f$ has an inverse in $\Q(G),$ and $N$ is the kernel of $G\actson X_{f}.$ Further, the induced action $G/N\actson (X_{f},m_{X_{f}})$ is isomorphic to the Bernoulli action $G/N\actson (\{1,\cdots,1+b|N|\},u_{1+b|N|})^{G/N}.$
\end{prop}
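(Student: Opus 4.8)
The plan is to exploit the idempotent hidden in $f$. Write $e_N=\sum_{n\in N}n\in\Z(G)$ and $p=\frac{1}{|N|}e_N\in\Q(G)$. Since $N$ is a group, right multiplication by any $g\in N$ permutes $N$, so $e_N^2=|N|e_N$ and hence $p^2=p$; thus $f=1+be_N=(1+b|N|)p+(1-p)$ is a combination of the two orthogonal idempotents $p$ and $1-p$. The first thing I would record is the explicit inverse
\[f^{-1}=\frac{1}{1+b|N|}\,p+(1-p)\in\Q(G),\]
which makes sense because $1+b|N|\neq 0$ (the equation $b|N|=-1$ has no solution since $|N|\geq 2$); a one-line multiplication using $p(1-p)=0$ verifies $ff^{-1}=f^{-1}f=1$. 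This settles the first assertion and, since $f^{-1}$ is finitely supported, Proposition~\ref{P:free for free} already forces the kernel of $G\actson X_f$ to be finite; the remaining steps identify it precisely and recognize the quotient action. (For the stated Bernoulli shift to make literal sense we read $b\geq 1$, so that the alphabet size $1+b|N|\geq 3$.)

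Next I would describe $X_f$ concretely. Identifying the Pontryagin dual of $\T^G$ with $\Z(G)$ via the pairing $\langle\theta,\alpha\rangle_{\T}=\sum_g\alpha_g\theta(g)$, the space $X_f$ is the annihilator of the left ideal $\Z(G)f$, which as a group is generated by $\{gf:g\in G\}$. Hence $\theta\in X_f$ iff $\langle\theta,gf\rangle_{\T}=0$ for all $g$, which unwinds to the harmonic relation
\[\theta(g)+b\sum_{n\in N}\theta(gn)=0\quad\text{in }\T,\qquad\text{for all }g\in G.\]
The key observation is that the sum $\sum_{n\in N}\theta(gn)$ depends only on the coset $gN$, so the relation forces $\theta$ to be constant on each right coset $gN$; feeding coset-constancy back in gives $(1+b|N|)\theta(g)=0$, i.e. $\theta$ is valued in the torsion subgroup $A:=\tfrac{1}{1+b|N|}\Z/\Z\cong\Z/(1+b|N|)\Z$. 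Conversely, every coset-constant $A$-valued $\theta$ satisfies the relation. Using normality of $N$ (so $G/N$ is a group and $gN=Ng$), I would then define $\Phi\colon X_f\to A^{G/N}$ by $\Phi(\theta)(gN)=\theta(g)$; this is a well-defined isomorphism of compact abelian groups.

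Finally I would check equivariance and transport the measure. From $(g_0\theta)(g)=\theta(g_0^{-1}g)$ one computes $\Phi(g_0\theta)(gN)=\theta(g_0^{-1}g)=\Phi(\theta)(g_0^{-1}gN)$, i.e. $\Phi(g_0\theta)=(g_0N)\cdot\Phi(\theta)$ where the right-hand action is the shift of $G/N$ on $A^{G/N}$. Thus the $G$-action factors through $G/N$ and becomes the Bernoulli shift under $\Phi$; in particular $N$ lies in the kernel, while the $G/N$-shift on an alphabet of size $|A|=1+b|N|\geq 2$ is faithful, so the kernel is exactly $N$. Being a topological group isomorphism, $\Phi$ sends $m_{X_f}$ to Haar measure on $A^{G/N}$, namely the product of the uniform measures $u_A$; identifying $A$ with $\{1,\dots,1+b|N|\}$ turns $u_A$ into $u_{1+b|N|}$, yielding the claimed isomorphism $G/N\actson(X_f,m_{X_f})\cong G/N\actson(\{1,\dots,1+b|N|\},u_{1+b|N|})^{G/N}$. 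The main obstacle here is organizational rather than conceptual: one must derive the harmonic relation correctly from the module/pairing conventions and use normality of $N$ carefully so that both the coset-constancy deduction and the equivariance of $\Phi$ go through; once this is in place every step is routine.
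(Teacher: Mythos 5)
Your proof is correct, and your inverse is exactly the one in the paper (the same idempotent decomposition $f=(1-e)+(1+b|N|)e$ with $e=\frac{1}{|N|}\sum_{n\in N}n$), but for the main isomorphism you take a genuinely different, dual-side route. The paper stays on the module side: it shows $\Z(G)f$ contains the ideal $J$ generated by $\{n-1:n\in N\}$, identifies $\Z(G)/J$ with $\Z(G/N)$, and then computes the kernel of the induced surjection $\Z(G/N)\to\Z(G)/\Z(G)f$ to be $(1+b|N|)\Z(G/N)$ --- a step that uses the explicit inverse $\phi=1+\frac{b}{1+b|N|}\sum_{n\in N}n$ together with the coprimality of $b$ and $1+b|N|$ --- and only then applies Pontryagin duality. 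You instead unwind the pairing to the relation $\theta(g)+b\sum_{n\in N}\theta(gn)=0$, observe that the sum depends only on the coset $gN$ (forcing coset-constancy and then $(1+b|N|)\theta(g)=0$), and read off $X_f\cong A^{G/N}$ with $A\cong\Z/(1+b|N|)\Z$ directly. Your version is more concrete and avoids both the coprimality argument and the explicit module isomorphism $\Psi$, at the cost of having to be careful with the pairing conventions and with normality (which you are); it also yields the exact kernel $N$ and the equivariance in one computation, whereas the paper verifies triviality of $N$ on $X_f$ separately via $(n-1)\Z(G)\subseteq\Z(G)f$. Two small points worth keeping: your restriction to $b\geq 1$ is a reasonable reading of the (slightly loosely stated) proposition, since for $b\leq 0$ the alphabet $\{1,\dots,1+b|N|\}$ is degenerate; and your aside invoking Proposition \ref{P:free for free} is dispensable, since you later pin down the kernel exactly by faithfulness of the $G/N$-shift on an alphabet of size at least $2$.
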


\begin{proof}
Let $x=\sum_{n\in N}n,$ $e=\frac{1}{|N|}x,$ so $e^{2}=e$ and we can write $f$ as
\[f=(1-e)+(1+b|N|)e.\]
Simple calculations show that $f$ has an inverse in $\Q(G)$ given by
\[\phi=(1-e)+(1+b|N|)^{-1}e.\]
Moreover, it is easy to check that the normality of $N$ implies that $f,$ and thus $\phi,$ is central in $\Q(G).$

To check that $N$ acts trivially on $X_{f},$ let $n\in \N.$ It then suffices to show that $(n-1)\Z(G)\subseteq \Z(G)f.$ So let $\alpha\in \Z(G).$ Then by direct calculation,
\[(n-1)\alpha\phi=(n-1)\phi\alpha=(n-1)\alpha\in\Z(G).\]
So $(n-1)\alpha=(n-1)\alpha\phi f\in \Z(G)f.$ So $N$ acts trivially on $X_{f}.$

It simply remains to prove that $G/N\actson (X_{f},m_{X_{f}})$ is Bernoulli. Let $J$ be the left ideal in $\Z(G)$ generated by $\{n-1:n\in \N\},$ by normality of $N$ this is in fact a two sided ideal. It is easy to see that we have an isomorphism of $\Z(G)$ modules $\Psi\colon \Z(G)/J\to \Z(G/N)$ given by $(\Psi(\alpha+J))^{\widehat{}}(gN)=(\alpha x)^{\widehat{}}(g).$ Since we already saw that $\Z(G)f\supseteq J,$ we get a natural $\Z(G)$-modular surjection $\pi\colon \Z(G)/J\to \Z(G)/\Z(G)f,$ and so we have a natural $\Z(G)$-modular surjection $\Z(G/N)\to \Z(G)/\Z(G)f$ given by $q=\pi\circ \Psi^{-1}.$

It remains to compute the kernel of $q.$ Note that if $\alpha\in \Z(G),$ then $\alpha\in \Z(G)f$ if and only if $\alpha\phi\in \Z(G).$ By a direct computation, $\phi=1+\frac{b}{1+b|N|}x.$ So
\[\alpha\phi=\alpha+\frac{b}{1+b|N|}\alpha x\]
and thus $\alpha\phi\in\Z(G)$ if and only if $\frac{b}{1+b|N|}\alpha x\in\Z(G).$ By definition, this means that $\frac{b}{1+b|N}\Psi(\alpha)\in \Z(G/N).$ Since $b,1+b|N|$ are easily seen to be relatively prime, this is the same as saying that $\Psi(\alpha)\in(1+b|N|)\Z(G/N).$ So we have shown that $\ker(q)=(1+b|N|)\Z(G/N).$

So we have exhibited an isomorphism $\Z(G)/Z(G)f\cong \Z(G/N)/(1+b|N|)\Z(G/N)$ of $\Z(G)$-modules, and applying Pontryagin duality shows that $G\actson (X_{f},m_{X_{f}})\cong G\actson (\Z/(1+b|N|)\Z,u_{\Z/(1+b|N|)\Z})^{G/N}.$ So $G/N\actson (X_{f},m_{X_{f}})$ is isomorphic to the Bernoulli shift $G/N\actson (\{1,\cdots,1+b|N|\},u_{1+b|N|})^{G/N}.$

\end{proof}

Let us proceed to give several examples of semi-lopsided elements whose corresponding actions are Bernoulli. In many examples our groups will be torsion-free, and so it is a consequence of Proposition \ref{P:free for free} that the actions we are considering are essentially free. We will thus not explicitly reference that these actions are essentially free before applying Ornstein theory. In later examples, we will consider groups with torsion and will give an explicit argument that these actions are faithful (and thus free by Proposition \ref{P:free for free}).

Since we know in these examples that the actions are isomorphic to Bernoulli, it is nice to know what Bernoulli shift they are. Since Bernoulli shifts over amenable groups are completely classified by their entropy by \cite{OrnClassify1, OrnClassify2, OrnWeiss} (see \cite{Bow,BowenOrn, SewardOrn, Stepin} for the more general fact that Bernoulli shifts overs sofic groups are completely classified their entropy), once we compute the entropy of these actions we will know what Bernoulli shift they are isomorphic to. By \cite[Proposition 2.2]{Me5} we know that once $f$ has an $\ell^{2}$ formal inverse, then $f$ is injective as a convolution operator $\ell^{2}(G)\to \ell^{2}(G).$ So by \cite{LiThom} (see also \cite{Me5} for the sofic case), the entropy of $G\actson (X_{f},m_{X_{f}})$ is $\log \Det_{L(G)}(f).$ In  Appendix \ref{S:FKD} (see Corollary \ref{C:FKD Calculation}) it is shown that if $f\in\Z(G)$ is semi-lopsided, and if $\supp(\widehat{f})=\{1\}\cup S$ where $S\subseteq P$ for some positive semigroup $P\subseteq G,$ then $\log \Det_{L(G)}(f)=\log\tau(f).$ So in all of the examples we are considering, we know that the entropy of $G\actson (X_{f},m_{X_{f}})$ is $\log\tau(f).$

\begin{example}
Let $G$ be the Heisenberg group of upper triangular matrices with integer entries and all diagonal entries $1.$ Then $G$ has polynomial growth of order 4 by the Bass-Guivarc’h formula. So our results only allow us to use lopsided elements. The group $G$ has a presentation $G=\ip{a,b,c:[c,a]=1,[c,b]=1,[a,b]=c}.$ Every element of $G$ can be uniquely represented as $a^{n}b^{m}c^{k}$ for integers $n,m,k\in \Z.$ The group $G$ has a total order $<$ given by saying that  $a^{n_{1}}b^{m_{1}}c^{k_{1}}<a^{n_{2}}b^{m_{2}}c^{k_{2}}$ if either:
\begin{itemize}
\item $n_{1}<n_{2},$ or
\item $n_{1}=n_{2},$ and $m_{1}<m_{2},$
\item $n_{1}=n_{2},m_{1}=m_{2}$ and $k_{1}<k_{2}.$
\end{itemize}
It can be checked that this is a total order which is left and right invariant.
By Corollary \ref{C:factor of Bern special case} and Ornstein theory, if
\begin{itemize}
\item $f=k+na+mb$ with $n,m,k\in \Z$ and $|n|+|m|<k,$ or
\item $f=k+na+mb+lc$ with $n,m,l,k\in \Z$ and $|n|+|m|+|l|<k,$
\end{itemize}
then $G\actons (X_{f},m_{X_{f}})$ is isomorphic to $G\actson (\{1,\cdots,k\},u_{k})^{G}.$ Since we are only considering lopsided elements, if we set $P=\{g\in G:g>1\},$ then we can in fact take any pairwise distinct $x_{1},\cdots,x_{k}\in P,$ any $m,n_{1},\cdots,n_{k}\in \Z$ with $\sum_{j}|n_{j}|<m,$ and then
\[f=m+\sum_{j=1}^{k}n_{j}x_{j}\]
will be such that $G\actson (X_{f},m_{X_{f}})$ is a Bernoulli shift with entropy $\log(m),$ provided $k\geq 2$. The reason for the restriction $k\geq 2,$ is that once $k\geq 2$ we have that $\ip{\{x_{i}^{-1}x_{j}:1\leq i,j\leq k\}}$ is a nontrivial subgroup of $G,$ and as such this subgroup is infinite because $G$ is torsion-free.
\end{example}

\begin{example}\label{E:Higher d heisenberg}
We can generalize the previous example slightly. For $N\geq 1,$ let $H_{N}$ be the group of upper triangular $(N+2)\times (N+2)$-matrices with integer entries, $1's$ on the diagonal, and so that all other nonzero entries are on the first row or the last column. For $1\leq j,k\leq n,$ let $E_{j,k}$ be the matrix defined by $(E_{j,k})_{rs}=\delta_{r=j}\delta_{k=s}.$ Define $c\in G$ by $c=\id+E_{1,N+2},$ and for $1\leq i\leq N,$ define $a_{i},b_{i}$ by $a_{i}=\id+E_{1,i+1},$ $b_{i}=\id+E_{i+1,N+2}.$ Then \begin{itemize}
    \item $\ip{a_{1},\cdots,a_{N},b_{1},\cdots,b_{N},c}=G,$
    \item $\ip{a_{1},\cdots,a_{N}},\ip{b_{1},\cdots,b_{N}}$ are abelian,
    \item  $c$ is central in $G,$
    \item $[a_{i},b_{j}]=1$ if $i\ne j,$
    \item $[a_{i},b_{i}]=c.$
\end{itemize}
Every element of $g\in G$ can be uniquely represented as
\[a_{1}^{n_{1}}b_{1}^{n_{2}}a_{2}^{n_{3}}b_{2}^{n_{4}}\cdots a_{N}^{n_{2N-1}}b_{n}^{n_{2N}}c^{n_{2N+1}}\]
for integers $n_{1},\cdots,n_{N},$ $m_{1},\cdots,m_{N},$ $l\in \Z.$ We can order $G$ lexicographically as before:
\[a_{1}^{n_{1}}b_{1}^{n_{2}}a_{2}^{n_{3}}b_{2}^{n_{4}}\cdots a_{N}^{n_{2N-1}}b_{N}^{n_{2N}}c^{n_{2N+1}}<a_{1}^{m_{1}}b_{1}^{m_{2}}a_{2}^{m_{3}}b_{2}^{m_{4}}\cdots a_{N}^{m_{2N-1}}b_{N}^{m_{2N}}c^{m_{2N+1}}\]
if $n_{j}<m_{j}$ when $1\leq j\leq 2N+1$ is  the minimal index such that $n_{j}\ne m_{j}.$ It can again be checked that this is both left and right-invariant. If we set $P=\{g\in G:g>1\},$ then as before we can take $x_{1},\cdots,x_{k}\in P$ $m,m_{1},\cdots,m_{k}\in \Z$ with $\sum_{j}|m_{j}|<m$ and $f=m+\sum_{j}m_{j}x_{j}$ will have $G\actson (X_{f},m_{X_{f}})$ being a Bernoulli shift with entropy $\log(m).$

The growth of $H_{N}$ is $2(N+1)$ by the Bass-Guivarc’h formula, and so once $N\geq 2$ we even have semi-lopsided, but not lopsided examples. For example, with
\[f=(2N+1)-\left(\sum_{j=1}^{N}a_{j}+\sum_{j=1}^{N}b_{j}+c\right)\]
we have that $G\actson (X_{f},m_{X_{f}})$ is isomorphic to a Bernoulli shift with base entropy $\log(2N+1)$ (proved $N\geq 2$). As we show in Appendix \ref{S:inverses}, since $f$ is well-balanced, we know that $f$ does \emph{not} have an $\ell^{1}$ inverse. We show there as well, using that $G$ is amenable, that $\lambda(f)$ is not invertible.

We can of course add signs here and consider, e.g.,
\[f=(2N+1)-\left(\sum_{j=1}^{N}\varepsilon_{j}a_{j}+\sum_{j=1}^{N}\sigma_{j}b_{j}+\alpha c\right)\]
for any $\varepsilon_{1},\cdots,\varepsilon_{N},$ $\sigma_{1},\cdots,\sigma_{N},$ $\alpha$ in $\{\pm 1\}.$ We then still have that $G\actson (X_{f},m_{X_{f}})$ is a Bernoulli shift with entropy $\log(2N+1).$ Since $c\in \ip{a_{1},\cdots,a_{N},b_{1},\cdots,b_{N}},$ we can similarly consider
\[f=m+\left(\sum_{j=1}^{N}m_{j}a_{j}+\sum_{j=1}^{N}l_{j}b_{j}\right)\]
where $\sum_{j=1}^{N}|m_{j}|+\sum_{j=1}^{N}|l_{j}|\leq m,$ and $m_{1},\cdots,m_{N}$ $l_{1},\cdots, l_{N}$ are not zero. Then $G\actson (X_{f},m_{X_{f}})$ is a Bernoulli shift with entropy $\log(m).$

Similar examples, can be given by taking products of the generators. E.g. if $N=2,$ we may consider
\[f=4-(a_{1}+a_{2}+a_{1}b_{1}+a_{2}b_{2})\]

\end{example}

\begin{example}
For $N\geq 1,$ let $T_{N}$ be the group of upper-triangular $(N+2)\times (N+2)$-matrices with $1'$s on the diagonal. As $T_{N}$ contains $H_{N}$ from example \ref{E:Higher d heisenberg}, we know that $T_{N}$ has polynomial growth of degree at least $5$ once $N\geq 2.$ Define $c\in T_{N}$ by $c=\id+E_{1,N+2}$ and for $1\leq i<j\leq N+2$ with $(i,j)\ne (1,N+2),$ let $a_{ij}=\id+E_{i,j}.$ Then $T_{N}=\ip{a_{ij},c}$ and we leave it is an exercise to the reader argue as in Example \ref{E:Higher d heisenberg} to show that $T_{N}$ has an left-invariant total order which makes $a_{ij},c>1.$ Thus if
\begin{itemize}
    \item $(m_{ij})_{1\leq i<j\leq N+2,(i,j)\ne (1,N+2)}\in \Z\setminus\{0\},$
    \item $n\in \Z\setminus\{0\},$
    \item $m\in \N$ satisfies $|n|+\sum_{i,j}|m_{ij}|\leq m,$
\end{itemize}
then with $f=m+nc+\sum_{i,j}m_{ij}a_{ij}$ we have that $G\actson (X_{f},m_{X_{f}})$ is isomorphic to a Bernoulli shift with entropy $\log(m).$ If each $m_{ij},n>0$ and $m=n+\sum_{ij}m_{ij},$ then we have that $f$ has no $\ell^{1}$ inverse and $\lambda(f)$ is not invertible, so we really have to use formal $\ell^{2}$ inverses.
\end{example}
Suppose that
\[\begin{CD}  1 @>>> H @>\iota>> G @>\pi>> K@>>> 1,\end{CD}\]
is an exact sequence of groups. Then if $H,K$ are equipped with left-invariant partial orders, we can equip $G$ with a left-invariant partial order as follows. We say that $g_{1}\preceq g_{2}$ if either:
\begin{itemize}
    \item $\pi(g_{1}),\pi(g_{2})$ are comparable and $\pi(g_{1})\prec \pi(g_{2}),$ or
    \item $g_{1}^{-1}g_{2}\in H$ with $g_{1}^{-1}g_{2}\succeq 1.$
\end{itemize}

In particular, if $K,H$ are left-orderable, then $G$ is left-orderable.

\begin{example}
Fix an $n\in \N,n\geq 2$ and let $G=BS(1,n)=\ip{a,b:aba^{-1}=b^{n}}.$ There is a natural map $BS(1,n)\to \Z$ given by $a\mapsto 1,b\mapsto 0,$ and it is direct to see that the kernel is isomorphic to $\Z(1/n).$ Thus $G$ is left-orderable and there is a left-invariant ordering $<$ on $BS(1,n)$ which makes $a,b>1.$ So if we set
\[f=m+na+kb\]
with $m,n,k$ nonzero integers such that $|n|+|k|\leq m,$ then $G\actson (X_{f},m_{X_{f}})$ is isomorphic to a Bernoulli shift with entropy $\log(m).$
\end{example}

\begin{example}
Suppose that $G$ is \emph{polycyclic}. This means that we have a chain of groups
\[\{1\}=G_{0}\triangleleft G_{1}\triangleleft G_{2}\cdots G_{n-1}\triangleleft G_{n}=G\]
so that $G_{i}/G_{i-1}\cong \Z$ for all $1\leq i\leq n.$ Suppose we choose $x_{i}\in G_{i}$ so that $G_{i}=x_{i}G_{i-1}.$ Then it is possible to find a left-invariant order $<$ on $G$ so that $x_{i}>1$ for all $i.$ So if we choose $(n_{i})_{i=1}^{k}\in (\Z\setminus\{0\})^{k},$ $m\in \Z$ with $\sum_{i=1}^{k}|n_{i}|<m,$ then $f=m+\sum_{i}n_{i}x_{i}$ has $G\actson (X_{f},m_{X_{f}})$ isomorphic to a Bernoulli shift with entropy $\log(m).$ If $G$ is either superpolynomial growth, or polynomial growth of degree at least $5,$ then we can allow $\sum_{i=1}^{k}|n_{i}|\leq m$ to force  $G\actson (X_{f},m_{X_{f}})$ to be isomorphic to a Bernoulli shift with entropy $\log(m).$

It often happens that $G$ has exponential growth. For example, suppose that $A\in SL_{2}(\Z)$ has no eigenvalues on the unit circle. Then $\Z^{2}\rtimes_{A}\Z$ is of exponential growth. Let $a=(e_{1},0),b=(e_{2},0),c=(0,1)$ then for any $l,n,k,m\in \Z\setminus\{0\}$ with $|n|+|k|\leq m,$ setting $f=m-(la+nb+kc)$ we have that $G\actson (X_{f},m_{X_{f}})$ is isomorphic to a Bernoulli shift with entropy $\log(m).$

\end{example}

Suppose $X$ is a Polish space, and that $\preceq$ is a partial order on $X$ so that $\{y\in X:y\preceq x\}$ is closed for every $x\in X.$ Suppose that $G\actson X$ faithfully by order-preserving homeomorphisms. Then we can define a left-invariant partial order on $G$ as follows: let $(x_{n})_{n}$ be a dense sequence in $X,$ we then say that $g\prec h$ if
\begin{itemize}
    \item $\{n:gx_{n},hx_{n} \textnormal{ are comparable and not equal }\}$ is not empty,
    \item if $m=\min\{n:g x_{n},hx_{n}\textnormal{ are comparable and not equal}\},$ then $g x_{m}\prec h x_{m}.$
\end{itemize}
If $\preceq$ is a total order on $X,$ then it is easy to check that this gives a total order on $G.$

It is a folklore result that this construction characterizes left-orderable groups: namely, a countable group is left-orderable if and only if it embeds into the group of order-preserving homeomorphisms of $\R.$ See, for example, \cite[Theorem 6.8]{GhysOrder}.

The order described above seems fairly abstract. However, since we are allowed to prescribe the first few terms of our sequence $(x_{n})_{n},$ it makes it relatively straightforward to construct orders which make certain generators bigger than $1$ in that order.

\begin{example}
All the preceding examples were solvable. We can consider non-solvable examples, in fact groups of intermediate growth. Let $a,b,c,d$ be the homeomorphisms of $\Z^{\N}$ defined recursively by:
\[a(x_{1},x_{2},x_{3},\cdots)=(1+x_{1},x_{2},\cdots)\]
\[b(x_{1},x_{2},\cdots)=\begin{cases}
(x_{1},a(x_{2},x_{3},\cdots)),& \textnormal{ if $x_{1}$ is even}\\
(x_{1},c(x_{2},x_{3},\cdots)),& \textnormal{ if $x_{1}$ is odd}
\end{cases}\]
\[c(x_{1},x_{2},\cdots)=\begin{cases}
(x_{1},a(x_{2},x_{3},\cdots)),& \textnormal{if $x_{1}$ is even}\\
(x_{1},d(x_{2},x_{3},\cdots)),& \textnormal{ if $x_{1}$ is odd}
\end{cases}\]
\[d(x_{1},x_{2},\cdots)=\begin{cases}
(x_{1},x_{2},x_{3},\cdots),& \textnormal{if $x_{1}$ is even}\\
(x_{1},b(x_{2},x_{3},\cdots)),& \textnormal{ if $x_{1}$ is odd}
\end{cases}.\]
Let $G$ be the group generated by $a,b,c,d.$ Notice that $G$ preserves the lexographic ordering $<$ on $\Z^{\N}.$ Fix a dense sequence $(a_{n})_{n=1}^{\infty}$ in $\Z^{\N}$ with $a_{1}=(0,0,\cdots,0), a_{2}=(1,0,\cdots,0).$ Define an order $<$ on $G$ by saying that $g<h$ if when we set $n=\min\{l\in \N:g(a_{l})\ne h(a_{l})\},$ then $g(a_{n})<h(a_{n}).$ Then $<$ is a left-invariant order on $G$ which makes $a,b,c,d$ all bigger than $1.$ Also by \cite{GrigorIntGrowth3} this group has intermediate growth. So for any $m,m_{1},m_{2},m_{3},m_{4}\in \Z\setminus\{0\}$ with $\sum_{j=1}^{4}|m_{j}|\leq m,$ and with $f=m+m_{1}a+m_{2}b+m_{3}c+m_{4}d,$ we have that $G\actson (X_{f},m_{X_{f}})$ is a Bernoulli shift with entropy $\log(m).$

This group was defined in \cite{GRMakiOrder}, and its orderability was first shown there. We have followed the exposition in \cite[Section 1.1]{NavasOrderDiffeo}.
\end{example}

Our ability to use partial orders is in fact nontrivial, and we  can construct examples where the acting group is not torsion-free (and thus not left-orderable). For this, it will be helpful to switch to positive semigroups instead of partial orders.

Suppose $H$ is a group equipped with a left-invariant partial order $\preceq,$  and let $P$ be the corresponding positive semigroup. Suppose that $K\actson H$ by automorphisms  and that $K\cdot P\subseteq P.$ We can then define a positive semigroup $P_{G}$ in $G=H\rtimes K$ by  $P_{G}=\{(h,k):h\in P,k\in K\}.$ From this positive semigroup we get a left-invariant partial order as described before. In many cases, $K$ is finite and so $G$ \emph{is not} left-orderable.

A particular example is the case of \emph{generalized} wreath products. For a group $H,$ and a set $I,$ we will use $H^{\oplus I}=\{h\in H^{I}:h(i)=1\mbox{ for all but finitely many $i$}\}.$ Let $K$ be a group acting on a set $I,$ and $H$ another group. We then let $K\actson H^{\oplus I}$ by permuting the coordinate of $H$ (using the action $H\actson I$). The generalized wreath product $H\wr_{I}K$ is then the semidirect product $H^{\oplus I}\rtimes K.$ Suppose that $H$ has a left-invariant partial order $\preceq.$ We may then define a partial order $\preceq$ on $H^{\oplus I}$ by saying that if $h=(h_{i})_{i\in I},h'=(h'_{i})_{i\in I}\in H^{\oplus I},$ then $h\preceq k$ if and only if $h_{i}\preceq h'_{i}$ for all $i\in I.$ This is clearly invariant under the action of $K$ on $H$ for all $i\in I,$ and thus induces an order on $H\wr_{I}K$ by the above construction. Since this construction sometimes produces groups which are not torsion-free, we need to take some care in applying Proposition \ref{P:free for free} to have our actions be essentially free. The following lemma will do most of the work for us. This lemma is surely well known, but we will include a proof for completeness.

\begin{lem}\label{L:no finite normal subgroups}
Suppose that $H$ is an infinite group with no nontrivial finite normal subgroups. Let $I$ be a set, and let $K$ be a group with $K\actson I$ faithfully. Then $H\wr_{I}K$ has no nontrivial finite normal subgroups.
\end{lem}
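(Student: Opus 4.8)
The plan is to take a finite normal subgroup $N\trianglelefteq H\wr_I K$ and show it is trivial, in two stages: first that $N$ is contained in the base group $H^{\oplus I}$, and then that $N$ (now inside the base) is itself trivial. Throughout write $G=H^{\oplus I}\rtimes K$ with $K$ acting by $(k\cdot h)(i)=h(k^{-1}i)$, so that multiplication is $(h,k)(h',k')=(h\,(k\cdot h'),kk')$, and let $p_i\colon H^{\oplus I}\to H$ denote the (homomorphic) projection onto the $i$-th coordinate. If $I=\varnothing$ then faithfulness forces $K=\{1\}$ and $G$ is trivial, so I may assume $I\ne\varnothing$.

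For the first stage, suppose $(h,k)\in N$ with $k\ne 1$. By faithfulness of $K\actson I$ there is an $i_0\in I$ with $k^{-1}i_0\ne i_0$. Conjugating $(h,k)$ by $(a,1)$ for $a\in H^{\oplus I}$ keeps the $K$-coordinate equal to $k$ and sends the base coordinate to $a\,h\,(k\cdot a^{-1})$, whose value at $i_0$ is $a(i_0)\,h(i_0)\,a(k^{-1}i_0)^{-1}$. Choosing $a$ supported at the single point $i_0$ with value $t\in H$ makes this equal to $t\,h(i_0)$, since $a(k^{-1}i_0)=1$. As $t$ ranges over the infinite group $H$, these conjugates are pairwise distinct elements of $N$, contradicting finiteness. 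Hence every element of $N$ has trivial $K$-coordinate, i.e. $N\subseteq H^{\oplus I}$.

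For the second stage, fix $i\in I$ and consider $p_i(N)\leq H$, which is finite since $N$ is. I claim it is normal in $H$: for $h_0\in H$, conjugation of an element $n\in N\subseteq H^{\oplus I}$ by the element of $H^{\oplus I}$ that equals $h_0$ at coordinate $i$ and $1$ elsewhere acts coordinate-wise, multiplying the $i$-th coordinate by $h_0$ on the left and $h_0^{-1}$ on the right; since $N$ is normal this conjugate lies in $N$, so $h_0\,p_i(n)\,h_0^{-1}\in p_i(N)$. Thus $p_i(N)$ is a finite normal subgroup of $H$, hence trivial by hypothesis. As this holds for every $i\in I$, every $n\in N$ is trivial in each coordinate, so $N=\{1\}$.

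The steps are elementary once the semidirect-product bookkeeping is set up; the only place requiring care—and the crux of the argument—is the first stage, where both hypotheses on $H\wr_I K$ enter simultaneously: faithfulness of the $K$-action produces a moved coordinate $i_0$ with $k^{-1}i_0\ne i_0$, and the infinitude of $H$ then yields infinitely many conjugates. I would double-check in particular that the chosen $a$ really leaves $a(k^{-1}i_0)=1$, which is exactly why $k^{-1}i_0\ne i_0$ is needed, so that the $i_0$-coordinate of the conjugate is genuinely $t\,h(i_0)$ and not perturbed by the term $a(k^{-1}i_0)^{-1}$.
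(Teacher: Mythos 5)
Your proof is correct, and it follows the same two-stage skeleton as the paper (first force $N$ into the base group $H^{\oplus I}$, then kill it there), but the first stage is implemented by a genuinely different, dual mechanism. The paper works with stabilizers: since $N$ is finite and normal, the centralizer of $g=(h,k)\in N$ has finite index in $G$, so its intersection $C$ with $H^{\oplus I}$ has finite index there; equating $(h',1)(h,k)=(h,k)(h',1)$ gives $\alpha_{k}(h')=h^{-1}h'h$ for $h'\in C$, and a support analysis shows this is incompatible with $C$ having finite index in $H^{\oplus I}$ unless $k$ fixes every index, whence $k=1$ by faithfulness. You instead work with orbits: conjugating $(h,k)$ by elements supported at a moved coordinate $i_0$ produces, as $t$ ranges over the infinite group $H$, pairwise distinct elements $t\,h(i_0)$ in the $i_0$-coordinate, hence infinitely many distinct conjugates inside the finite set $N$. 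By orbit--stabilizer these are two sides of the same coin, but your version is more hands-on and avoids the paper's support analysis (and, incidentally, sidesteps a typo in the paper, where the centralizer condition is written $\alpha_{k}(h')=h^{-1}h'h^{-1}$ instead of $h^{-1}h'h$). Your second stage also earns its keep: the paper simply asserts that $N\cap H^{\oplus I}=\{1\}$ follows because $H$ has no nontrivial finite normal subgroups, whereas you supply the missing verification that each coordinate projection $p_{i}(N)$ is a finite \emph{normal} subgroup of $H$, via conjugation by elements supported at the coordinate $i$.
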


\begin{proof}
We use $\alpha_{k}$ for the action of $k\in K$ on $H^{\oplus I}.$
Let $G=H\wr_{I}K,$ and suppose that $N$ is a finite normal subgroup in $G.$ Let $g\in N,$ and write $g=(h,k)$ with $h\in H^{\oplus I},$ and $k\in K.$ Let $C$ be the intersection of $H^{\oplus I}$ and the centralizer of $g$ in $G.$ Since $N$ is a finite normal subgroup, we know that the centralizer of $g$ in $G$ has finite index, and thus $C$ is a finite index subgroup of $H^{\oplus I}.$ Suppose that $(h',1)\in C,$ then $(h',1)(h,k)=(h'h,k)$ and $(h,k)(h',1)=(h\alpha_{k}(h'),k).$ So we are forced to have $\alpha_{k}(h')=h^{-1}h'h^{-1}.$ Hence $C=\{h'\in H^{\oplus I}:\alpha_{k}(h')=h^{-1}h'h^{-1}\}.$ Since $H$ is infinite, the fact that $C$ is finite-index in $H^{\oplus I}$ forces $\alpha_{k}$ to  act trivially on $I.$ Since the action of $K$ is faithful, we must have that $k=1.$ Thus $g\in N\cap H^{\oplus I}.$ Since $H$ has no nontrivial finite normal subgroups we must have that $N\cap H^{\oplus I}=\{1\}.$ Thus $g=1,$ and since $g$ was an arbitrary element of $N,$ we must have that $N$ is trivial.

\end{proof}

We will use this to give one more example of a Bernoulli shift where the acting group is torsion-free, and then one more where it is not.

\begin{example}
Consider $G=\Z\wr \Z,$ and use the natural order on $\Z$ to induce a left-invariant partial order as described above. Let $a'\in \Z^{\oplus Z}$ be given by $(a')_{n}=\delta_{n=0}$ and let $b=(0,1).$ Then $a,ab$ generate $G,$ and since $\Z^{k}$ embeds into $\Z\wr \Z$ for all $k,$ it is clear that $G$ has superpolynomial growth. Moreover, $aba^{-1}(a^{-1}(ab))^{k-1}$ is an infinite order element, and so $\ip{(ab)a^{-1},a^{-1}(ab)}$ is infinite. The left-invariant partial order describe above has $a\succ 1,ab\succ 1.$ Hence, for all $n,k,m\in \Z\setminus\{0\}$ with $|n|+|k|\leq m$ and with $f=m+ka+nab,$ we have that $G\actson (X_{f},m_{X_{f}})$ is a Bernoulli shift with entropy $\log(m).$
\end{example}

\begin{example}\label{E:hard wreath product example}
Fix $k\in \N,$ and let $G=\Z\wr (\Z/k\Z).$ Give $\Z$ its natural order and use this to induce an order on $G$ as described above.  Let $a'\in \Z^{\oplus (\Z/k\Z)}$ be given by $(a')_{n}=\delta_{n=0}$ and let $b=(0,1).$ Then $a,ab$ generate $G.$ Consider $n,l,m\in \Z\setminus\{0\}$ with $|n|+|l|<m$ if  $1\leq k\leq 4,$ and $|n|+|l|\leq m$ if $k\geq 5.$ Let $f\in \Z(G)$ be given by $f=m+na+lab.$ By Lemma \ref{L:no finite normal subgroups}, we know that $G$ has no finite normal subgroups, and thus by Proposition \ref{P:free for free} that $G\actson (X_{f},m_{X_{f}})$ is essentially free. So by Ornstein theory, we know that $G\actson (X_{f},m_{X_{f}})$ is a Bernoulli shift with entropy $\log(m).$
\end{example}

Let $S_{k}$ act on $\{1,\cdots,k\}$ in the natural way. In $G_{0}=\Z\wr_{\{1,\cdots,k\}} S_{k},$ one has a left-invariant partial order $\preceq$ so that  $\{g\in G_{0}:g\succ 1\}=\{(x,\sigma):x\in (\N\cup\{0\})^{k},x\ne 0,\sigma\in S_{k}\}.$ So if $H\leq S_{k}$ is arbitrary, we may consider $G=\Z^{k}\rtimes H $ as a group with a left-invariant partial order. So we can obtain similar modifications of Example \ref{E:hard wreath product example}. Let $S$ be a set of generators for $H,$ and let $\mathcal{O}\subseteq (\N\cup\{0\})^{k}\setminus\{0\}$ be such that the group generated by $H\mathcal{O}$ is all of $\Z^{k}.$ Let $(m_{x})_{x\in O},(l_{y,s})_{(y,s)\in \mathcal{O}\times S},m\in (\Z\setminus\{0\})^{k}$ be such that $\sum_{s}|m_{s}|+\sum_{s,o}|l_{s,o}|\leq m.$ Set
\[f=m+\sum_{x}m_{x}(x,\id)+\sum_{(y,s)\in \mathcal{O}\times S}l_{y,s}(y,s)\in \Z(G).\]
If $k\geq 5,$ then  $G\actson (X_{f},m_{X_{f}})$ is a Bernoulli shift with entropy $\log(m).$ If $k\leq 4,$ then as long as we assume that $\sum_{s}|m_{s}|+\sum_{s,o}|l_{s,o}|<m,$ we still have that $G\actson (X_{f},m_{X_{f}})$ is a Bernoulli shift with entropy $\log(m).$

Of course the possibilities here are endless, and one can consider other wreath products $H\wr_{I}K$ where $H$ is left-orderable. E.g. one  can take $H$ to be the Heisenberg group, or other polycyclic groups.

\subsection{Nonamenable examples}

\begin{example}
For an integer $r>1,$ let $\F_{r}$ be the free group on letters $\{a_{1},\cdots,a_{r}\}.$ Let $P$ be the set of nonidentity elements of $\F_{r}$ whose word decompositions only have positive powers of the generators,  then $P$ is a positive semigroup. Thus $P$ induces a left-invariant order $\preceq$ on $\F_{r}$ by $g\preceq h$ if $g^{-1}h\in P\cup\{1\}.$ In fact, by \cite{OrderedFreeProducts} we know that there is a left-invariant total order $<$ on $\F_{r}$ so that $P\subseteq \{g\in \F_{r}:g>1\},$ but we will not need this. Thus if $(m_{j})_{j=1}^{r}\in(\Z\setminus\{0\})^{r},m\in\N$ with $\sum_{j=1}^{r}|m_{j}|\leq m,$ and $f=m+\sum_{j=1}^{r}m_{j}a_{j},$ then $G\actson (X_{f},m_{X_{f}})$ is a factor of a Bernoulli shift.
\end{example}

More generally, any residually free group  is left-orderable. This includes fundamental groups of compact surfaces without boundary whose genus is larger than $1$. Additionally, if $G$ is residually free, and we have an \emph{explicit} family of homomorphisms $\pi_{n}\colon G\to \F_{r(n)}$ for integers $r(n)\in\N$ so that $\bigcap_{n}\ker(\pi_{n})=\{1\},$ then we get an explicit left-invariant partial order on $G.$ Thus in many cases, we can explicitly describes semi-lopsided elements $f\in \Z(G)$ so that $G\actson (X_{f},m_{X_{f}})$ is a factor of a Bernoulli shift.
More generally by \cite{OrderedFreeProducts}, we also have that free products of left-orderable groups are left-orderable.

\begin{example}
For an integer $n\geq 3,$ consider the \emph{braid group} $B_{n}$ which has the following presentation:
\[B_{n}=\ip{\sigma_{1},\cdots,\sigma_{n-1}:\sigma_{i}\sigma_{i+1}\sigma_{i}=\sigma_{i+1}\sigma_{i}\sigma_{i+1},\mbox{ for $1\leq i\leq n-2$ and } \sigma_{i}\sigma_{j}=\sigma_{j}\sigma_{i}\mbox{ if $|i-j|\geq 2$}}.\]
Dehornoy proved (see \cite{Dehornoy}) that $B_{n}$ has a left-invariant order $<$ which is now called the \emph{Dehornoy order}. This ordering is uniquely defined by saying that for all $1\leq i\leq n-2$ we have $\beta_{0}\sigma_{i}\beta_{1}>1$ for all $\beta_{0},\beta_{1}\in \ip{\sigma_{i+1},\cdots,\sigma_{n-1}}.$ Let $m,m_{1},\cdots,m_{n-1}\in \Z\setminus\{0\},$ and $f=m+\sum_{j=1}^{n-1}m_{j}\sigma_{j}.$ Since $n\geq 3,$ we know that $B_{n}$ contains a free group on two generators and thus has exponential growth. So if $\sum_{j}|m_{j}|\leq |m|,$ we have that $G\actson (X_{f},m_{X_{f}})$ is a factor of a Bernoulli shift.

\end{example}

More generally, let $S$ be a compact surface with a finite set of punctures (potentially empty) and nonempty boundary, and let $G$ be the mapping class group of $S.$ Then by \cite{OrderMCG}, we know that $G$ is left-orderable (see also \cite{GeomOrderMCG}). In many cases, we can explicitly describe a left-invariant order on $G$ and as before this allows us to explicitly produces semi-lopsided $f\in\Z(G)$ with $G\actson (X_{f},m_{X_{f}})$ a factor of a Bernoulli shift.

\begin{example}
Consider Thompson's group $F$ which is the group of all increasing, piecewise linear homeomorphisms of $[0,1]$ whose break points are dyadic rationals, and whose slopes are powers of $2.$ By definition, $F$ is a subgroup of the group of increasing homeomorphisms of $[0,1]$ and is thus left-orderable. Let $x_{0}$ denote the element of $F$ whose break points are $\frac{1}{4},\frac{1}{2}$ and has $x_{0}(\frac{1}{4})=\frac{1}{2},x_{0}(\frac{1}{2})=\frac{3}{4}.$ Let $x_{1}$ be the element of $F$ whose break points are $\frac{1}{2},\frac{3}{4},\frac{7}{8}$ with $x_{1}(\frac{1}{2})=\frac{1}{2},x_{1}(\frac{5}{8})=\frac{3}{4},x_{1}(\frac{3}{4})=\frac{7}{8}.$ It is known (see \cite[Section 3]{NotesThompson}) that $x_{0},x_{1}$ generate $F,$ and that $F$ has exponential growth. We may choose a left-invariant order $<$ so that $x_{0},x_{1}>1,$ for example by considering a dense sequence $(t_{n})_{n=1}^{\infty}$ in $[0,1]$ with $t_{1}=\frac{5}{8}$ and using this to define a left-invariant order on $F$ as described before. Thus if $n,l,m\in \Z\setminus\{0\}$ with $|n|+|l|\leq m,$ then $f=m+nx_{0}+lx_{1}$ has $G\actson (X_{f},m_{X_{f}})$ a factor of a Bernoulli shift.
\end{example}

\begin{example}
Let $G=\Z/k\Z*\Z/k\Z$ for $k\geq 3.$ Let $x$ be the generator of the first factor of $\Z/k\Z,$ and let $y$ be the generator of the second factor. Let $P$ be the semigroup generated by $xy,x^{2}y^{2}.$ A simple exercise shows that this is a positive semigroup with $\ip{P}=G,$ and that $(x^{2}y^{2})^{-1}xy$ has infinite order. Thus if $m,n,l\in \Z\setminus\{0\}$ and $|n|+|l|\leq m,$ then $f=m+nxy+lx^{2}y^{2},$ then $G\actson (X_{f},m_{X_{f}})$ is a factor of a Bernoulli shift.

In this case, it is also direct to establish that $G$ has no finite normal subgroups. So, we know that $G\actson (X_{f},m_{X_{f}})$ is free. This is of less significance in this case, as Ornstein theory no longer applies in the nonamenable setting.

\end{example}

\begin{example}
Fix an integer $k>1.$ Regard $\F_{k}$ as the free group on letters $a_{1},\cdots,a_{k}.$ Let $P$ be the semigroup generated by $a_{1},\cdots,a_{k}.$  As before we have that $P$ is a positive semigroup in $\F_{k}.$ Consider the natural action of $S_{k}$ by automorphisms on $\F_{k}$ given by permuting the generators. This semigroup is clearly invariant under $S_{k},$ so this induces a left-invariant partial order on $G_{0}=\F_{k}\rtimes S_{k}.$

Regard $\Z/k\Z\leq S_{k}$ via the translation action on $\Z/k\Z.$ Let $G=\F_{k}\rtimes \Z/k\Z.$ Let $b=(0,1+k\Z).$ Then if $m,m_{1},\cdots,m_{k},n\in \Z\setminus\{0\}$ and $|n|+\sum_{j}|m_{j}|\leq m,$ then
\[f=m+n a_{1}b+\sum_{j}m_{j}a_{j}\]
is such that $G\actson (X_{f},m_{X_{f}})$ is a factor of a Bernoulli shift.

Again, in this case one can argue as in Lemma \ref{L:no finite normal subgroups} to show that $G$ (and also $G_{0}$) has no nontrivial finite normal subgroups. So $G\actson (X_{f},m_{X_{f}})$ is also essentially free in this case.
\end{example}

\section{Closing Remarks}\label{S:closing}

Suppose that $G$ is a countable, discrete group and that $f\in \Z(G)$ is semi-lopsided. If $G$ is assumed sofic, then we know that the entropy of $G\actson (X_{f},m_{X_{f}})$ is $\log(\tau(f))$ from the results of \cite{Me5,LiThom}, and Appendix \ref{S:FKD}. It is worth noting that if $\tau(f)$ is odd, then the proof of Corollary \ref{C:factor of a Bernoulli shift main one} exhibits $X_{f}$ as a factor of a Bernoulli shift \emph{which has equal entropy}. It thus makes it very plausible that this factor map is, in fact, an isomorphism. If $\tau(f)$ is even, then the domain of $\Theta_{\xi}$ is a Bernoulli shift whose entropy is not equal to that of $G\actson (X_{f},m_{X_{f}}).$ So the factor map exhibited in the proof of Corollary \ref{C:factor of a Bernoulli shift main one} is not injective modulo null sets (if $G$ is assumed sofic). However,  under the stronger assumption that $f$ has an $\ell^{1}$ formal inverse we can exhibit a factor map from a Bernoulli shift with equal entropy.

To prove this, we will need to note that, though we did not prove this in \cite{MeMaxMinWC}, we can replace the assumption that $\nu$ has mean zero and finite second moment with the assumption that $\nu$ has a finite first moment \emph{provided we work  with $\ell^{1}$ vectors instead of $\ell^{2}$ vectors}.
This follows from the exact same methods as in \cite[Section 3]{MeMaxMinWC}. In this case there is no need to apply the uniform continuity as in \cite[Section 3]{MeMaxMinWC}, since the fact that $\nu$ has finite first moment implies that for $\nu^{\otimes G}$-almost every $x\in \R^{G}$ it is true that for every $\xi\in \ell^{1}(G,\R),g\in G$ the series
\[\sum_{h\in  G}x(h)\xi^{*}(h^{-1}g)\]
converges absolutely. We state the analogous version of Theorem \ref{T: background convolution} for convolving with $\ell^{1}$ vectors here. The only difficult part is computing the Fourier transform of $(\Theta_{\xi})_{*}(\nu^{\otimes G}),$ and this follows by identical arguments as those in \cite{MeWE, MeMaxMinWC}.

\begin{thm}\label{T:ell1 convolution pushforward}
Let $G$ be a countable, discrete group and fix a $\nu\in \Prob(\R)$ with finite first moment. For $\xi\in \ell^{1}(G,\R)$ let $\Theta_{\xi}\colon \R^{G}\to \T^{G}$ be the (almost everywhere defined) map $\Theta_{\xi}(x)=q(x*\xi^{*})+\Z.$ Set $\mu_{\xi}=(\Theta_{\xi})_{*}(\nu^{\otimes G}).$ Then for every $\alpha\in \Z(G),$
\[\widehat{\mu}_{\xi}(\alpha)=\prod_{g\in G}\widehat{\nu}((\alpha \xi)(g)).\]
\end{thm}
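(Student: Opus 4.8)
The plan is to prove Theorem \ref{T:ell1 convolution pushforward} by reducing to the already-established Fourier transform computation and exploiting the fact that $\ell^{1}$ convolution is far better behaved than the $\ell^{2}$ case, so that the delicate limiting machinery of \cite[Section 3]{MeMaxMinWC} is not needed. First I would observe that since $\nu$ has finite first moment, for $\nu^{\otimes G}$-almost every $x\in \R^{G}$ the series $\sum_{h\in G}x(h)\xi^{*}(h^{-1}g)$ converges absolutely for every $\xi\in \ell^{1}(G,\R)$ and every $g\in G$; this is the standard Fubini/Borel--Cantelli argument, using that $\E_{\nu}|x(h)|<\infty$ is summable against $\|\xi\|_{1}$. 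Hence $\Theta_{\xi}(x)=q(x*\xi^{*})+\Z$ is genuinely well-defined a.e., with no need to pass through uniform continuity of a net of finitely supported approximants as one does for $\ell^{2}$ vectors.

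The central computation is the Fourier transform formula. Fix $\alpha\in \Z(G)$. By definition of the Fourier transform of the pushforward measure,
\[
\widehat{\mu}_{\xi}(\alpha)=\int_{\R^{G}}\exp(2\pi i\ip{\Theta_{\xi}(x),\alpha}_{\T})\,d\nu^{\otimes G}(x).
\]
Using the pairing $\ip{\theta,\alpha}_{\T}=\sum_{g}\alpha_{g}\theta(g)$ together with $\Theta_{\xi}(x)(g)=(x*\xi^{*})(g)+\Z$, the exponent becomes $2\pi i\sum_{g}\alpha_{g}(x*\xi^{*})(g)$, which after interchanging the (absolutely convergent) sums and expressing everything in terms of the coordinates $x(h)$ rearranges into $2\pi i\sum_{h}x(h)(\alpha\xi)(h)$. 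Here I would carefully verify the identity $\sum_{g}\alpha_{g}(x*\xi^{*})(g)=\sum_{h}x(h)(\alpha\xi)(h)$ by unwinding the definitions of $*$ and of the module action $\alpha\xi$, which is the bookkeeping analogue of the computation already carried out in \cite{MeWE, MeMaxMinWC}. Since the $x(h)$ are independent under $\nu^{\otimes G}$, the integral factorizes as a product over $h\in G$:
\[
\widehat{\mu}_{\xi}(\alpha)=\prod_{h\in G}\int_{\R}\exp\bigl(2\pi i\, t\,(\alpha\xi)(h)\bigr)\,d\nu(t)=\prod_{h\in G}\widehat{\nu}((\alpha\xi)(h)),
\]
which is exactly the claimed formula. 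The factorization of the integral into a product is justified by dominated convergence applied to finite partial products, since each factor has modulus at most $1$ and $\alpha\xi\in \ell^{1}(G,\R)$ forces $(\alpha\xi)(h)\to 0$, so $\widehat{\nu}((\alpha\xi)(h))\to 1$ fast enough for the infinite product to converge absolutely.

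The main obstacle I anticipate is making the interchange of summation and integration fully rigorous: one must confirm that $\alpha\xi\in \ell^{1}(G,\R)$ (which holds because $\alpha\in \Z(G)$ is finitely supported and $\xi\in\ell^{1}$, so $\alpha\xi$ is a finite sum of translates of $\xi$), and that the almost-everywhere convergence of $x*\xi^{*}$ is strong enough to push the exponential through the infinite sum before integrating. This is precisely the point where the finite first moment hypothesis is used, and where the $\ell^{1}$ theory diverges from the $\ell^{2}$ theory: in the $\ell^{1}$ regime absolute convergence gives everything directly, whereas in the $\ell^{2}$ case of Theorem \ref{T: background convolution} one genuinely needs the mean-zero assumption and a continuity argument to define $\Theta_{\xi}$ at all. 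Because the remaining steps are identical in form to those in \cite{MeWE, MeMaxMinWC}, I would state the result and indicate that the proof follows those references mutatis mutandis, highlighting only the simplification afforded by absolute convergence.
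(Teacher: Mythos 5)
Your proposal is correct and follows essentially the same route the paper takes: the paper itself only sketches this proof, noting that finite first moment gives almost-everywhere absolute convergence of $x*\xi^{*}$ (so the uniform-continuity machinery of the $\ell^{2}$ case is unnecessary) and that the Fourier transform identity follows by the same independence/factorization computation as in \cite{MeWE, MeMaxMinWC}. Your write-up just fills in those details — the rearrangement $\sum_{g}\alpha_{g}(x*\xi^{*})(g)=\sum_{h}x(h)(\alpha\xi)(h)$, the factorization over independent coordinates via dominated convergence on finite partial products, and the absolute convergence of the product from $\alpha\xi\in\ell^{1}$ — exactly as intended.
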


Using this, we can exhibit $G\actson (X_{f},m_{X_{f}})$ as an equal entropy factor of a Bernoulli shift, provided that $f$ has an $\ell^{1}$ formal inverse.

\begin{cor}\label{C:equal entropy factor map}
Let $G$ be a countable, discrete group and let $f\in \Z(G).$ Let $S=\supp(\widehat{f})\setminus \{1\},$ and $H=\ip{S}.$ Suppose that there is a left-invariant partial order $\preceq$ on $H$ so that $S\subseteq \{h\in H:h\succ 1\}.$ Set $m=\tau(f).$
\begin{enumerate}[(i)]
\item Assume that $m=2k+1$ is odd and that $f$ has an $\ell^{2}$ formal inverse $\xi.$ Set $\nu=u_{\{-k,\cdots,k\}},$ and let $\Theta_{\xi}$ be defined as in  Theorem \ref{T: background convolution} corresponding to this $\nu.$ Then $(\Theta_{\xi})_{*}(\nu^{\otimes G})=m_{X_{f}}.$ If $G$ is assumed sofic, then the actions $G\actson (\{-k,\cdots,k\}^{G},u_{\{-k,\cdots,k\}}^{\otimes G}), G\actson (X_{f},m_{X_{f}})$ both have entropy $\log(m)$ for any sofic approximation of $G.$ \label{I:ell2 inverse case closing remarks}
\item Assume that $f$ has an $\ell^{1}$ formal inverse $\xi.$ Define $\Theta_{\xi}\colon \{1,\cdots,m\}^{G}\to X_{f}$ by $\Theta_{\xi}(x)(g)=(x\xi^{*})(g)+\Z.$ Then $(\Theta_{\xi})_{*}(u_{m}^{\otimes G})=m_{X_{f}}.$ If $G$ is assumed sofic, then the actions $G\actson (\{-k,\cdots,k\}^{G},u_{\{-k,\cdots,k\}}^{\otimes G}), G\actson (X_{f},m_{X_{f}})$ both have entropy $\log(m)$ for any sofic approximation of $G.$ \label{I:ell1 inverse case closing remarks}
\end{enumerate}

\end{cor}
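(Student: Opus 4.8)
The plan is to assemble three ingredients that are already in hand: the product formula for the Fourier transform of the pushforward (Theorem \ref{T: background convolution} for part (i), Theorem \ref{T:ell1 convolution pushforward} for part (ii)), the divisibility Lemma \ref{L:getting divisibility}, and the entropy/Fuglede--Kadison computation. Part (i) is essentially a repackaging of the odd case of Corollary \ref{C:factor of a Bernoulli shift main one}: with $\nu = u_{\{-k,\cdots,k\}}$ and $m=2k+1$, for $\alpha\in\Z(G)f$ one has $\alpha\xi\in c_{c}(G,\Z)$, so every factor $\widehat{\nu}((\alpha\xi)(g))$ equals $1$ and $\widehat{\mu}(\alpha)=1$; while for $\alpha\notin\Z(G)f$, Lemma \ref{L:getting divisibility} produces a $g_{0}$ with $(\alpha\xi)(g_{0})\in\frac{1}{m}\Z\cap\Z^{c}$, at which $\widehat{\nu}(t)=\frac{\sin(m\pi t)}{m\sin(\pi t)}$ vanishes, so $\widehat{\mu}(\alpha)=0$. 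Hence $\widehat{\mu}=1_{\Z(G)f}$, which is equivalent to $\mu=m_{X_{f}}$.

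For part (ii), the essential new observation is that Theorem \ref{T:ell1 convolution pushforward} yields the \emph{same} product formula $\widehat{\mu}_{\xi}(\alpha)=\prod_{g\in G}\widehat{\nu}((\alpha\xi)(g))$ but \emph{without} the mean-zero hypothesis on $\nu$; it asks only for a finite first moment, which is automatic for a finitely supported measure. This freedom is exactly what lets us take the uniform measure $\nu=u_{\{1,\cdots,m\}}$ on an $m$-element set. I would then compute $\widehat{\nu}(t)=\frac{1}{m}\sum_{j=1}^{m}e^{2\pi i j t}$, which equals $1$ for $t\in\Z$ and, being $\frac{1}{m}e^{2\pi i t}\frac{e^{2\pi i m t}-1}{e^{2\pi i t}-1}$ for $t\notin\Z$, vanishes precisely when $mt\in\Z$, i.e. on $\frac{1}{m}\Z\cap\Z^{c}$ — the same vanishing pattern as in part (i). Since $\xi\in\ell^{1}(G,\R)\subseteq\ell^{2}(G,\R)$ is simultaneously the $\ell^{2}$ formal inverse, Lemma \ref{L:getting divisibility} applies verbatim, and the dichotomy above gives $\widehat{\mu}=1_{\Z(G)f}$, so $\mu=m_{X_{f}}$.

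The entropy bookkeeping is common to both parts. Because $f$ has an $\ell^{2}$ formal inverse, $\lambda(f)$ is injective by \cite[Proposition 2.2]{Me5}, so by \cite{LiThom} (and \cite{Me5} in the sofic case) the entropy of $G\actson(X_{f},m_{X_{f}})$ is $\log\Det_{L(G)}(f)$, which by Corollary \ref{C:FKD Calculation} equals $\log\tau(f)=\log m$. On the Bernoulli side, the uniform base has exactly $m$ atoms in (ii) (and $2k+1=m$ atoms in (i)), hence Shannon entropy $\log m$; when $G$ is sofic, Bowen's theorem \cite{Bow} gives sofic entropy $\log m$ for every sofic approximation. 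Thus both systems have entropy $\log m$ and the factor map is between equal-entropy systems.

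The genuinely new point, and the only real obstacle, is conceptual rather than computational: one must recognize that the $\ell^{1}$ formulation Theorem \ref{T:ell1 convolution pushforward} dispenses with the mean-zero normalization that, in the $\ell^{2}$ construction, forced the even-$m$ base onto the $(2m-1)$-point set $\{-(m-1),\cdots,m-1\}$ and thereby destroyed the entropy match. Using an $m$-point base in (ii) restores equal entropy for every $m$, odd or even. The residual verifications — the geometric-sum evaluation of $\widehat{\nu}$ and the applicability of Lemma \ref{L:getting divisibility}, which is routine here because a semi-lopsided element with an $\ell^{1}$ formal inverse is lopsided and its inverse is the absolutely convergent Neumann series $\frac{1}{m}\sum_{n}x^{n}$ — are then straightforward.
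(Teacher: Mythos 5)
Your proposal is correct and follows essentially the same route as the paper: part (i) is the odd case of Corollary \ref{C:factor of a Bernoulli shift main one}, part (ii) applies Theorem \ref{T:ell1 convolution pushforward} with $\nu=u_{\{1,\cdots,m\}}$ (whose Fourier transform is $1$ on $\Z$ and vanishes on $\frac{1}{m}\Z\cap\Z^{c}$) together with Lemma \ref{L:getting divisibility}, and the entropy bookkeeping cites \cite{Bow}, \cite{LiThom,Me5} and Corollary \ref{C:FKD Calculation} exactly as the paper does. The only quibble is your closing aside that a semi-lopsided element with an $\ell^{1}$ formal inverse must be lopsided, which is neither proved in the paper nor needed, since $\ell^{1}\subseteq\ell^{2}$ already makes Lemma \ref{L:getting divisibility} applicable.
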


\begin{proof}

(\ref{I:ell2 inverse case closing remarks}): The fact that $(\Theta_{\xi})_{*}(\nu^{\otimes G})=m_{X_{f}}$ is shown in the course of the proof of Corollary \ref{C:factor of a Bernoulli shift main one}. By \cite{Bow}, we know that $G\actson (\{-k,\cdots,k\}^{G},u_{\{-k,\cdots,k\}}^{\otimes G})$ has entropy $\log(m)$ with respect to any sofic approximation of $G.$ The fact that $G\actson (X_{f},m_{X_{f}})$ has entropy $\log(m)$ is a consequence of \cite{LiThom, Me5} and the fact that the Fuglede-Kadison determinant of $f$ is $\log(m)$ (see Appendix \ref{S:FKD}, namely Corollary \ref{C:FKD Calculation}, for more details).

(\ref{I:ell1 inverse case closing remarks}): Set $\mu=(\Theta_{\xi})_{*}(u_{m}^{\otimes G}).$ By Theorem \ref{T:ell1 convolution pushforward}, for $\alpha\in \Z(G),$
\[\widehat{\mu}(\alpha)=\prod_{g\in G}\widehat{\nu}((\alpha\xi)(g)).\]
Since $\widehat{\nu}(t)=1$ for $t\in \Z$ and $\widehat{\nu}(t)=0$ for every $t\in \frac{1}{m}\Z\cap \Z^{c},$ it follows as in the proof of Corollary \ref{C:factor of a Bernoulli shift main one} that $\mu=m_{X_{f}}.$

\end{proof}

Thus in either case (\ref{I:ell2 inverse case closing remarks}), (\ref{I:ell1 inverse case closing remarks}) of Corollary \ref{C:equal entropy factor map} we can exhibit $G\actson (X_{f},m_{X_{f}})$ as an equal factor map of a Bernoulli shift with the same entropy, so this makes it plausible that this factor map is an isomorphism.

\begin{conj}\label{con:injectivity conjecture}
Let $G$ be a countable, discrete, group with a left-invariant partial order $\preceq.$ Let $f\in \Z(G)$ be semi-lopsided and such that $\supp(\widehat{f})\setminus\{1\}\subseteq \{g\in G:g\succ 1\}.$ Set $m=\tau(f).$ Suppose that either:
\begin{enumerate}[(i)]
    \item $f$ has an $\ell^{2}$ formal inverse $\xi$ and $m$ is odd,
    \item $f$ has an $\ell^{1}$ formal inverse.
\end{enumerate}
In case (\ref{I:ell2 inverse case closing remarks}) let $\Theta_{\xi}$ be defined as in case (\ref{I:ell2 inverse case closing remarks}) of Corollary \ref{C:equal entropy factor map}, and in case (\ref{I:ell1 inverse case closing remarks}) let $\Theta_{\xi}$ be defined as in case (\ref{I:ell1 inverse case closing remarks}) of Corollary \ref{C:equal entropy factor map}. Then $\Theta_{\xi}$ is injective modulo null sets.
\end{conj}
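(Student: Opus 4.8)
The plan is to analyse the fibres of $\Theta_\xi$ through the difference of two points lying in a common fibre. Write the source alphabet as $A$ (so $A=\{-k,\dots,k\}$ in case (i) and $A=\{1,\dots,m\}$ in case (ii)), so that $\Theta_\xi(x)=q(x\xi^{*})$ for the quotient $q\colon\R^{G}\to\T^{G}$. Then $\Theta_\xi(x)=\Theta_\xi(y)$ precisely when $z:=x-y$ satisfies $z\xi^{*}\in\Z^{G}$, and since $x,y\in A^{G}$ we have $z\in\{-(m-1),\dots,m-1\}^{G}$. Setting $W=\{w\in\R^{G}:w\xi^{*}\in\Z^{G}\}$, the shift-equivariance $(hw)\xi^{*}=h(w\xi^{*})$ together with $h\Z^{G}=\Z^{G}$ shows $W$ is a shift-invariant subgroup. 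Injectivity modulo null sets is then equivalent to the statement that $C:=\{x\in A^{G}:\exists\,0\neq z\in W,\ x-z\in A^{G}\}$ is null, while injectivity \emph{everywhere} would follow from the stronger algebraic assertion $W\cap\{-(m-1),\dots,m-1\}^{G}=\{0\}$. (In case (i) one must interpret $z\xi^{*}$ through the measurable extension of Theorem \ref{T: background convolution} rather than a pointwise convolution, since $\xi\in\ell^{2}\setminus\ell^{1}$ in general; the reduction is cleanest in the $\ell^{1}$ case (ii).)

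The order hypothesis enters through the support of $\xi$. Writing $f=m(1-\rho)$ with $\widehat{\rho}$ supported on $S=\supp(\widehat{f})\setminus\{1\}\subseteq P:=\{g:g\succ1\}$, Lemma \ref{L:what is the inverse} gives $\xi=\tfrac1m\sum_{n\ge0}\rho^{n}$; since a nonempty product of elements of $P$ again lies in $P$, each $\rho^{n}$ with $n\ge1$ is supported in $P$ and vanishes at $1$, whence $\supp(\xi)\subseteq\{1\}\cup P$ and $\xi(1)=1/m$. Unwinding the convolution conventions, for bounded $z$,
\[
(z\xi^{*})(g)=\sum_{a}z(a)\,\xi(g^{-1}a)=\frac{z(g)}{m}+\sum_{a\succ g}z(a)\,\xi(g^{-1}a),
\]
because $\xi(g^{-1}a)\neq0$ forces $g^{-1}a\succeq1$, i.e.\ $a\succeq g$. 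Now suppose $z\neq0$ and $g_{0}$ is a $\preceq$-maximal element of $\supp(z)$. Then every $a\succ g_{0}$ lies outside $\supp(z)$, the sum vanishes, and $(z\xi^{*})(g_{0})=z(g_{0})/m$, a nonzero element of $\tfrac1m\Z\cap\Z^{c}$ since $0\neq z(g_{0})\in\{-(m-1),\dots,m-1\}$. Hence $z\xi^{*}\notin\Z^{G}$. This is the exact analogue, for the map $\Theta_\xi$ and a \emph{maximal} rather than minimal support element, of the computation in Lemma \ref{L:getting divisibility}. It follows that any nonzero $z\in W\cap\{-(m-1),\dots,m-1\}^{G}$ has support with no $\preceq$-maximal element, and in particular infinite support. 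This already yields injectivity on the (dense but null) set of configurations differing in finitely many coordinates; moreover, for each fixed such $z$ the collapse set $\{x\in A^{G}:x-z\in A^{G}\}$ is null, as its measure is $\prod_{g\in\supp(z)}\frac{m-|z(g)|}{m}\le\prod_{g\in\supp(z)}\frac{m-1}{m}=0$.

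The remaining step is to upgrade ``each collapsing difference has a null collapse set'' to ``$C$ is null,'' and I expect this to be the main obstacle. If $W\cap\{-(m-1),\dots,m-1\}^{G}$ were countable (for instance if one could show it is discrete) this would be immediate, but a priori this set may be uncountable, and a union of uncountably many null sets need not be null. I would reformulate the question via the relatively independent self-joining: the law of $z=x-y$ under $\mu^{\otimes G}\times_{X_{f}}\mu^{\otimes G}$ is a shift-invariant probability measure on $W\cap\{-(m-1),\dots,m-1\}^{G}$ supported on elements with no maximal support element, and injectivity modulo null sets is equivalent to this measure being $\delta_{0}$. The difficulty is that nothing forces a shift-invariant measure on $W\cap\{-(m-1),\dots,m-1\}^{G}$ to be trivial, and the one-sided order estimate above does not by itself exclude invariant measures carried by infinitely-supported differences. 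In the sofic case one could attempt to exploit that $\Theta_\xi$ is a factor map between systems of equal entropy $\log m$ (Corollary \ref{C:equal entropy factor map}); however, unlike the amenable setting, equal sofic entropy does not force a factor map to be an isomorphism, so this does not close the gap. A complete proof therefore seems to require either a genuine rigidity argument establishing $W\cap\{-(m-1),\dots,m-1\}^{G}=\{0\}$, or group-specific structure of the kind Lind--Schmidt use for free groups.
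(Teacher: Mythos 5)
This statement is Conjecture \ref{con:injectivity conjecture}: the paper does not prove it, and explicitly leaves it open, noting only that Lind--Schmidt \cite{LindSchmidtFreeBern} verify case (ii) for some examples over free groups. So there is no proof in the paper to compare against, and your proposal should be judged as an attempt at an open problem. Read that way, it is a reasonable and honest partial attack: the reduction to the shift-invariant group $W=\{w:w\xi^{*}\in\Z^{G}\}$ intersected with $\{-(m-1),\dots,m-1\}^{G}$, the identification of $\supp(\xi)\subseteq\{1\}\cup P$ with $\xi(1)=1/m$, and the maximal-element computation showing $(z\xi^{*})(g_{0})=z(g_{0})/m\notin\Z$ are all correct and are the natural dual of Lemma \ref{L:getting divisibility} (which uses a minimal element of $\supp(\alpha)$). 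You also correctly flag the two real issues: in case (i) the pointwise fibre analysis is not literally available since $\Theta_{\xi}$ is only a measurable extension and $\sum_{a}z(a)\xi(g^{-1}a)$ need not converge absolutely for $\xi\in\ell^{2}\setminus\ell^{1}$; and, more seriously, the argument only excludes differences whose support has a $\preceq$-maximal element.

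The gap you name at the end is exactly the heart of the conjecture and your proposal does not close it: one must rule out a possibly uncountable family of infinitely-supported elements of $W\cap\{-(m-1),\dots,m-1\}^{G}$, equivalently show that the relatively independent self-joining over $X_{f}$ pushes the difference to $\delta_{0}$. Nothing in the order-theoretic estimate forbids shift-invariant measures concentrated on differences with no maximal support element, and (as you observe) equal sofic entropy of source and target does not force injectivity of a factor map outside the amenable setting. So the proposal should be regarded as a correct reduction of the conjecture to the statement that the invariant ``difference measure'' on $W\cap\{-(m-1),\dots,m-1\}^{G}$ is trivial, not as a proof; this is consistent with the paper's own framing of the statement as a conjecture supported by the equal-entropy heuristic of Corollary \ref{C:equal entropy factor map}.
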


We remark that if $G$ is a free group, then Lind-Schmidt \cite{LindSchmidtFreeBern} can show that case (\ref{I:ell1 inverse case closing remarks}) of Conjecture \ref{con:injectivity conjecture} is true for some examples of $f.$

We also remark here that in the proof of Corollary \ref{C:factor of a Bernoulli shift main one}, we did not really need to find a fixed choice of $m\in \N$ so that for every $\alpha\in \Z(G)\setminus \Z(G)f$ there is some $g_{0}\in G$ with $\widehat{\nu}((\alpha\xi)(g_{0}))\in\frac{1}{m}\Z\cap \Z^{c}.$ In actuality the major point here is that we need to ensure that if $\alpha\in\Z(G)\setminus \Z(G)f,$ then there is some $g_{0}\in G$ so that  $\widehat{\nu}((\alpha\xi)(g_{0}))$ is a noninteger rational  number whose denominator is ``not too big." We state this precisely as follows.

\begin{thm}
Let $G$ be a countable, discrete group and $f\in \Z(G).$ Suppose that $f$ has an  $\ell^{2}$ formal inverse $\xi.$ Suppose that there is an $M\in\N$ with the following property. For every $\alpha\in \Z(G)\cap (\Z(G)f)^{c},$ there is a $g_{0}\in G$ so that $(\alpha\xi)(g_{0})\in \frac{1}{k}\Z\cap \Z^{c}$ for some $1\leq k\leq M.$ Then $G\actson (X_{f},m_{X_{f}})$ is a factor of a Bernoulli shift.
\end{thm}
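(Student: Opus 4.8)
The plan is to run the argument of Corollary \ref{C:factor of a Bernoulli shift main one} essentially verbatim, the only genuinely new ingredient being the choice of a \emph{single} base measure $\nu$ whose Fourier transform vanishes on all the relevant rationals with bounded denominator simultaneously. Once such a $\nu$ is in hand, the factor-of-Bernoulli conclusion is automatic from the construction of $\Theta_{\xi}$ (as noted at the start of Section \ref{S:background}, $G\actson(X_{f},(\Theta_{\xi})_{*}(\nu^{\otimes G}))$ is by definition a factor of a Bernoulli shift), so the only real work is to force the pushforward to equal $m_{X_{f}}$.

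First I would construct the base measure. Set $L=\operatorname{lcm}(1,2,\cdots,M)$, let $\eta=u_{\{0,\cdots,L-1\}}\in\Prob(\Z)$, and put $\nu=\eta^{*}*\eta$. Then $\nu$ is symmetric and finitely supported, so it has mean zero and finite second moment, exactly as required to invoke Theorem \ref{T: background convolution}. Moreover $\widehat{\nu}=|\widehat{\eta}|^{2}$, and for $t\notin\Z$ one computes
\[\widehat{\nu}(t)=\left|\frac{\sin(\pi L t)}{L\sin(\pi t)}\right|^{2},\]
which vanishes precisely when $Lt\in\Z$ and $t\notin\Z$, i.e. on $\frac{1}{L}\Z\cap\Z^{c}$. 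Since every $k$ with $1\leq k\leq M$ divides $L$, we have $\frac{1}{k}\Z\subseteq\frac{1}{L}\Z$, so $\widehat{\nu}$ vanishes on $\bigcup_{k=1}^{M}\left(\frac{1}{k}\Z\cap\Z^{c}\right)$. This is the one place where the hypothesis of a single uniform bound $M$ is used.

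Next I would apply Theorem \ref{T: background convolution} to this $\nu$ and the given $\ell^{2}$ formal inverse $\xi$, obtaining $\Theta_{\xi}$ and the measure $\mu=(\Theta_{\xi})_{*}(\nu^{\otimes G})$, which is supported on $X_{f}$ by the Corollary following Theorem \ref{T: background convolution}. To identify $\mu$ with $m_{X_{f}}$ I would verify $\widehat{\mu}=1_{\Z(G)f}$ using the formula $\widehat{\mu}(\alpha)=\prod_{g\in G}\widehat{\nu}((\alpha\xi)(g))$. For $\alpha=\beta f\in\Z(G)f$, the identity $f\xi=\delta_{1}$ gives $\alpha\xi=\widehat{\beta}\in c_{c}(G,\Z)$ (exactly as in the proof of Corollary \ref{C:factor of a Bernoulli shift main one}), so every factor equals $\widehat{\nu}$ of an integer, namely $1$, and $\widehat{\mu}(\alpha)=1$. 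For $\alpha\in\Z(G)\setminus\Z(G)f$, the hypothesis supplies a $g_{0}$ with $(\alpha\xi)(g_{0})\in\frac{1}{k}\Z\cap\Z^{c}$ for some $1\leq k\leq M$; by the previous paragraph $\widehat{\nu}$ annihilates this value, so the absolutely convergent product has a vanishing factor and $\widehat{\mu}(\alpha)=0$. Hence $\widehat{\mu}=1_{\Z(G)f}=\widehat{m_{X_{f}}}$, so $\mu=m_{X_{f}}$, and $\Theta_{\xi}$ exhibits $G\actson(X_{f},m_{X_{f}})$ as a factor of the Bernoulli shift $G\actson(\R^{G},\nu^{\otimes G})$.

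I do not expect a serious obstacle, since the argument is a uniform-in-denominator refinement of Corollary \ref{C:factor of a Bernoulli shift main one}. The only point requiring care is producing one mean-zero, finite-second-moment measure on $\Z$ whose Fourier transform kills every bad denominator at once: the lcm step reduces this to a single modulus $L$, and the symmetrized uniform measure $\eta^{*}*\eta$ then handles that modulus while automatically satisfying the mean-zero hypothesis of Theorem \ref{T: background convolution}.
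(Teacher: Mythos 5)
Your proposal is correct and follows essentially the same route as the paper: both reduce the theorem to producing a single finitely supported, mean-zero $\nu\in\Prob(\Z)$ whose Fourier transform vanishes on $\Z^{c}\cap\bigcup_{k=1}^{M}\frac{1}{k}\Z$ and then rerun the Fourier-transform argument of Corollary \ref{C:factor of a Bernoulli shift main one}. The only (immaterial) difference is the construction of $\nu$: the paper convolves $M$ measures $\nu_{1}*\cdots*\nu_{M}$, one per denominator, whereas you take the single modulus $L=\operatorname{lcm}(1,\cdots,M)$ and use $\eta^{*}*\eta$ with $\eta=u_{\{0,\cdots,L-1\}}$; both yield a valid base measure.
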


\begin{proof}
It suffices to find a probability measure $\nu\in \Prob(\Z)$ which is finitely supported and has mean zero and so that $\widehat{\nu}(t)=0$ for every $t\in \Z^{c}\cap \bigcup_{k=1}^{M}\frac{1}{k}\Z.$ We can then simply follow the proof of Corollary \ref{C:factor of a Bernoulli shift main one} to see that $G\actson (X_{f},m_{X_{f}})$ is a factor of a Bernoulli shift. For $1\leq k\leq m,$ let
\[\nu_{k}=\begin{cases}
u_{\{-l,\cdots,l\}}& \textnormal{ if $k=2l+1$ is odd,}\\
u_{\{-k,\cdots,-1\}}*u_{\{1,\cdots,k\}}& \textnormal{if $k$ is even.}
\end{cases}\]
Now set $\nu=\nu_{1}*\nu_{2}*\cdots*\nu_{M}.$ As
\[\widehat{\nu}=\prod_{k=1}^{M}\widehat{\nu}_{k},\]
and
\[\int x\,d\nu(x)=\sum_{k=1}^{M}\int x\,d\nu_{k}(x),\]
it is easy to see that $\nu$ has the desired properties.

\end{proof}

 \appendix

\section{Background on tracial von Neumann algebras}\label{S:Tracial vNa}

Let $\mathcal{H}$ be a Hilbert space. Every element of $B(\mathcal{H})$ is a function $\mathcal{H}\to \mathcal{H}$ and so we may view $B(\mathcal{H})\subseteq \mathcal{H}^{\mathcal{H}}.$ The \emph{strong operator topology} on $B(\mathcal{H})$ is the subspace topology inherited from the product topology on $\mathcal{H}^{\mathcal{H}}$. We may also define this topology by prescribing a basis. Given $T\in B(\mathcal{H}),$ a finite $F\subseteq \mathcal{H},$ and an $\varepsilon>0,$ set
\[U_{F,\varepsilon}(T)=\bigcap_{\xi\in F}\{S\in B(\mathcal{H}):\|(S-T)\xi\|<\varepsilon\}.\]
The collection $(U_{F,\varepsilon})_{F,\varespilon}$ ranging over all finite $F\subseteq \mathcal{H}$ and $\varepsilon>0$ form a neighborhood basis at $T$ in the strong operator topology.

\begin{defn}
A \emph{von Neumann algebra} is a subalgebra $M\subseteq B(\mathcal{H})$ for some Hilbert space $\mathcal{H}$ which is closed under adjoints and in the strong operator topology.

A \emph{tracial von Neumann algebra} is a pair $(M,\tau)$ where $M$ is a von Neumann algebra and $\tau\colon M\to \C$ is a linear functional which is:
\begin{itemize}
    \item \emph{a state:} $\tau(x^{*}x)\geq 0$ for all $x\in M,$ and $\tau(1)=1$,
    \item \emph{faithful:} $\tau(x^{*}x)=0$ if and only if $x=0$,
    \item \emph{tracial:} $\tau(xy)=\tau(yx)$ for all $x,y\in M,$
    \item \emph{normal:} $\tau\big|_{\{x\in M:\|x\|\leq 1\}}$ is strong operator topology continuous.
\end{itemize}
\end{defn}

For notation, we will usually use $1$ for the identity operator on $B(\mathcal{H}).$
The main example which concerns us is the \emph{group von Neumann algebra}. Let $G$ be a countable discrete group, and define $\lambda\colon G\to \mathcal{U}(\ell^{2}(G))$ by
\[(\lambda(g)\xi)(h)=\xi(g^{-1}h)\mbox{ for $g,h\in G,\xi\in \ell^{2}(G).$}\]
The group von Neumann algebra of $G,$ denoted $L(G),$ is defined by
\[L(G)=\overline{\Span\{\lambda(g):g\in G\}}^{SOT}.\]
Here the span and closure are taken by viewing $\mathcal{U}(\ell^{2}(G))\subseteq B(\mathcal{H}).$
Define $\tau\colon L(G)\to \C$ by
\[\tau(x)=\ip{x\delta_{1},\delta_{1}}.\]
We leave it as an exercise to check that $(L(G),\tau)$ is a tracial von Neumann algebra. Another good case for intuition is the pair $(L^{\infty}(X,\mu),\int \cdot \,d\mu)$ for a probability space $(X,\mu).$ We may view $L^{\infty}(X,\mu)$ as a von Neumann algebra by identifying an essentially bounded function with its multiplication operator acting on $L^{2}(X,\mu).$ It turns out that every tracial von Neumann algebra with $M$ abelian is of the form $(L^{\infty}(X,\mu),\int \cdot \,d\mu)$ for some probability space $(X,\mu)$.

We may use the trace to construct a representation of $M$ which may be nicer than the original Hilbert space on which $M$ is represented by following the abelian case. We define an inner product on $M$ by $\ip{x,y}=\tau(y^{*}x).$ We use $\|\cdot\|_{2}$ for the resulting norm given by $\|x\|_{2}=\tau(x^{*}x)^{1/2}$ for $x\in M,$ and we let $L^{2}(M,\tau)$ be the completion of $M$ under this inner product. We will still use $\|\cdot\|_{2}$ for the norm on $L^{2}(M,\tau).$ For $x\in M,$ we sometimes use $\hat{x}$ for $x$ viewed as a vector in the completion $L^{2}(M,\tau).$ For $x,y\in M$ we have
\[\|xy\|_{2}\leq \|x\|\|y\|_{2},\]
\[\|yx\|_{2}\leq \|y\|_{2}\|x\|,\]
(see \cite[Chapter V.2, Equation 8]{Taka})
where $\|x\|$ is the operator norm of $x$ acting on the original Hilbert space $\mathcal{H}$ that $M$ is defined on.
It follows from the above estimates that left/right multiplication on $M$ are $\|\cdot\|_{2}$-$\|\cdot\|_{2}$ uniformly continuous as maps $M\to M$ and thus have continuous extensions to bounded operators on $L^{2}(M,\tau).$ For $\xi\in L^{2}(M,\tau)$ we will use $x\xi $, $\xi x$ for the image of $\xi$ under the continuous extension of the left/right multiplication operators given above. It is a consequence of faithfulness of $\tau$ (see \cite[Theorem VIII.4.8]{Conway}) that the operators $\xi\mapsto x\xi,$ $\xi\mapsto \xi x$ have operator norm equal to operator norm of $x$ acting on the original Hilbert space $\mathcal{H}$ that $M$ is defined on.
In analogy with the abelian case, we will use $\|x\|_{\infty}$ for the operator norm of $x.$

This construction is relatively concrete in the group case. If $M=L(G)$ and $\tau=\ip{ \cdot \delta_{1},\delta_{1}},$ then it is direct to show that
\[\ip{\lambda(\alpha)\delta_{1},\lambda(\beta)\delta_{1}}=\ip{\widehat{\lambda(\alpha)},\widehat{\lambda(\beta)}}\]
for all $\alpha,\beta\in \C(G).$ From this it follows that there is unique unitary $U\colon \ell^{2}(G)\to L^{2}(M,\tau)$ which sends $\delta_{1}$ to $\hat{1}$ and which is equivariant with respect to the natural action of $M$ on $\ell^{2}(G)$ and the left multiplication action of $M$ on $L^{2}(M,\tau).$

One of the most important features of von Neumann algebras is the ability to apply bounded Borel functions, and not just any polynomial, to (certain) elements of $M.$ For concreteness we will stick to the self-adjoint case, but this works more generally for any normal element in a von Neumann algebra. If $E\subseteq \C,$ we will use $\Bor^{\infty}(E)$ for the algebra of bounded, Borel, $\C$-valued functions on $E.$ This is an algebra under pointwise multiplication, and it is also a $*$-algebra using $f^{*}=\overline{f}.$ For $f\in \Bor^{\infty}(E),$ we use $\|f\|$ for the supremum norm of $f.$  We now recall the notion of Borel functional calculus, as well as the spectral theorem for self-adjoint operators. For $x\in M,$ we define its spectrum by
\[\sigma(x)=\{\lambda\in \C:x-\lambda 1 \mbox{ is not invertible}\},\]
by \emph{invertible} we either mean in the ring $M$ or the ring $B(\mathcal{H})$ (these notions are equivalent for von Neumann algebras by \cite[Proposition VIII.1.14]{Conway}). If $x\in M$ is self-adjoint, then $\sigma(x)\subseteq \R$ (see \cite[Corollary VII.1.13]{Conway}). In fact, for $x\in M$ self-adjoint we have that $\sigma(x)\subseteq [-\|x\|_{\infty},\|x\|_{\infty}]$ (see \cite[Proposition VIII.1.11 (e)]{Conway}). We say that $x\in M$ is \emph{positive}, and write $x\geq 0$, if $\sigma(x)\subseteq [0,\infty).$ This ends up being equivalent to saying that $\ip{x\xi,\xi}\geq 0$ for all $\xi\in \mathcal{H}$ (see \cite[Theorem VII.3.8]{Conway}). If $x,y\in M$ are self-adjoint, we then say $x\leq y$ if $y-x\geq 0.$

\begin{thm}[Borel functional calculus, Theorem IX.8.10 of \cite{Conway}]
Let $\mathcal{H}$ be a Hilbert space, and $M\subseteq B(\mathcal{H})$ a von Neumann algebra. Then for every self-adjoint $x\in M$  there is a unique $*$-homomorphism $\pi\colon\Bor^{\infty}(\sigma(x))\to M$ satisfying the following axioms:
\begin{itemize}
    \item $\pi$ maps the identically $1$ function to $1\in M,$
    \item $\pi$ maps the function $t\mapsto t$ to $x,$
    \item $\|\pi(f)\|_{\infty}\leq \|f\|$ for every $f\in \Bor^{\infty}(\sigma(x))$,
    \item if $f_{n}$ is a sequence in $\Bor^{\infty}(E),$ with $\sup_{n}\|f_{n}\|<\infty,$ and if $f_{n}$ converges pointwise on $E$ to $f\in \Bor^{\infty}(E)$, then $\pi(f_{n})\to \pi(f)$ in the strong operator topology.
\end{itemize}
Moreover, $\pi$ satisfies the following properties:
\begin{itemize}
    \item $\sigma(\pi(f))=f(\sigma(x))$ for every $f\in C(\sigma(x)),$
    \item if $f,g\in\Bor^{\infty}(\sigma(x))$ and $f\leq g,$ then $\pi(f)\leq \pi(g).$
\end{itemize}
\end{thm}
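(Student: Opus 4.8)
The plan is to construct $\pi$ in two stages, first on continuous functions and then on Borel functions via spectral measures, verifying the axioms by reducing everything to the continuous case. Since $x$ is self-adjoint, the \emph{continuous functional calculus} for the $C^{*}$-algebra $C^{*}(x,1)$ furnishes a unique isometric $*$-homomorphism $\pi_{0}\colon C(\sigma(x))\to C^{*}(x,1)\subseteq M$ with $\pi_{0}(1)=1$, $\pi_{0}(t\mapsto t)=x$, and $\sigma(\pi_{0}(f))=f(\sigma(x))$; this already yields the two supplementary properties for continuous $f$. For each pair $\xi,\eta\in\mathcal{H}$, the functional $f\mapsto \langle \pi_{0}(f)\xi,\eta\rangle$ on $C(\sigma(x))$ is bounded by $\|f\|\,\|\xi\|\,\|\eta\|$, so by the Riesz--Markov theorem there is a unique regular complex Borel measure $\mu_{\xi,\eta}$ on $\sigma(x)$ representing it. Because $\pi_{0}$ sends nonnegative functions to positive operators, each $\mu_{\xi,\xi}$ is a positive measure.

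For $f\in\Bor^{\infty}(\sigma(x))$ the form $(\xi,\eta)\mapsto \int f\,d\mu_{\xi,\eta}$ is sesquilinear and bounded by $\|f\|\,\|\xi\|\,\|\eta\|$, so it is represented by a unique bounded operator $\pi(f)$ satisfying $\|\pi(f)\|_{\infty}\leq\|f\|$ and extending $\pi_{0}$. Linearity and the relation $\pi(\overline{f})=\pi(f)^{*}$ are immediate from the corresponding properties of the measures. Multiplicativity $\pi(fg)=\pi(f)\pi(g)$ I would prove first for continuous $f,g$ (where it is inherited from $\pi_{0}$), then fix a continuous $g$ and extend to Borel $f$ by a dominated-convergence/monotone-class argument, and finally extend in the second variable the same way. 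To see $\pi(f)\in M$, I would use the \emph{double commutant theorem} $M=M''$: every $u\in M'$ commutes with $x$ and $1$, hence with $\pi_{0}$ of every continuous function, and a short computation with the measures $\mu_{\xi,\eta}$ propagates this commutation to all Borel $f$, so $\pi(f)\in M''=M$.

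The bounded-convergence axiom and uniqueness both follow from the identity $\|\pi(h)\xi\|^{2}=\langle\pi(\overline{h}h)\xi,\xi\rangle=\int|h|^{2}\,d\mu_{\xi,\xi}$, valid by multiplicativity and the $*$-property. If $f_{n}\to f$ pointwise with $\sup_{n}\|f_{n}\|<\infty$, then applying this to $h=f_{n}-f$ and invoking dominated convergence against the finite positive measure $\mu_{\xi,\xi}$ gives $\|\pi(f_{n})\xi-\pi(f)\xi\|\to 0$, i.e.\ strong operator convergence. For uniqueness, any $*$-homomorphism obeying the axioms must agree with $\pi_{0}$ on polynomials (by the two normalization axioms together with multiplicativity and linearity), hence on all of $C(\sigma(x))$ (by the norm bound and the Stone--Weierstrass theorem); since every bounded Borel function is a bounded pointwise limit of continuous functions along the Baire hierarchy, a monotone-class argument using the fourth axiom then forces agreement on all of $\Bor^{\infty}(\sigma(x))$.

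It remains only to record monotonicity: if $f\leq g$ are real-valued, then $g-f\geq 0$ and positivity of $\mu_{\xi,\xi}$ gives $\langle(\pi(g)-\pi(f))\xi,\xi\rangle=\int(g-f)\,d\mu_{\xi,\xi}\geq 0$, so $\pi(f)\leq\pi(g)$. I expect the main obstacle to be the interleaving of the measure-theoretic limiting arguments with the algebraic and von Neumann-algebraic structure: specifically, upgrading multiplicativity from continuous to arbitrary bounded Borel functions while simultaneously ensuring the resulting operators remain in $M$ rather than merely in $B(\mathcal{H})$, which is exactly where the SOT-closedness of $M$ (via the double commutant theorem) enters.
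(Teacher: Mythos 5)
Your outline is correct and is the standard construction (continuous functional calculus, then Riesz--Markov measures $\mu_{\xi,\eta}$ and bounded sesquilinear forms, with the double commutant theorem placing $\pi(f)$ in $M$ and a monotone-class/Baire-hierarchy argument handling multiplicativity and uniqueness), which is precisely the route the cited source takes; the paper itself gives no proof, quoting the result from Conway. The only point worth adding is that the spectral mapping property $\sigma(\pi(f))=f(\sigma(x))$ requires spectral permanence to identify the spectrum in $C^{*}(x,1)$ with that in $M$ or $B(\mathcal{H})$, a fact the paper records separately.
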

For $x\in M$ and $f\in \Bor^{\infty}(\sigma(x))$ we will use $f(x)$ for $\pi(f).$ This agrees with the standard notation when $f$ is a polynomial. It follows from uniqueness that the Borel functional calculus behaves well with respect to composition: if $f\in \Bor^{\infty}(\sigma(x)),$ and $g\in \Bor^{\infty}(\overline{f(\sigma(x))}),$ then $g(f(x))=(g\circ f)(x).$
It is frequently of use to reduce the study of arbitrary elements of $M$ to the self-adjoint case. For $x\in M$ (not necessarily self-adjoint) we set $|x|=(x^{*}x)^{1/2}.$ Here $(\cdot)^{1/2}$ is interpreted in terms of functional calculus: we are applying the square root function defined on $[0,\infty)$ to the positive element $x^{*}x$. By direct computation, $\||x|\xi\|=\|x\xi\|$ for every $\xi\in \mathcal{H}.$ In particular, $\||x|\|_{\infty}=\|x\|_{\infty},$ and so
\[\sigma(|x|)\subseteq [-\|x\|_{\infty},\|x\|_{\infty}]\cap [0,\infty)=[0,\|x\|_{\infty}].\]
We also set $\Re(x)=\frac{x+x^{*}}{2}.$ We caution the reader that in the case of $L(G)$ these notions do \emph{not} agree with the applying the absolute value and real part operations to the coefficients. Namely, if $\alpha\in \C(G),$ then $\Re(\widehat{\alpha})\neq \left(\frac{\alpha+\alpha^{*}}{2}\right)^{\widehat{}}.$ Similarly, if $\beta\in \C(G)$ has $\widehat{\beta}=|\widehat{\alpha}|,$ then $\lambda(\beta)\neq |\lambda(\alpha)|$ (indeed $\lambda(\beta)$ is typically not even self-adjoint, much less positive).

Another important consequence of Borel functional calculus is (a version of) the Spectral Theorem. Note that if $E\subseteq\R$ is Borel, then $1_{E}(x)$ is self-adjoint and idempotent. This implies that $1_{E}(x)$ has closed image and is the orthogonal projection onto its image.
\begin{thm}[The Spectral Theorem]
Let $\mathcal{H}$ be a Hilbert space and $T\in B(\mathcal{H})$ self-adjoint. Then for every $\xi\in \mathcal{H}$ there is a unique Borel measure $\nu$ on $\sigma(T)$ with
\[\ip{f(T)\xi,\xi}=\int f\,d\nu,\mbox { for all $f\in \Bor^{\infty}(\sigma(T))$}.\]
Moreover, $\nu(\sigma(T))=\|\xi\|^{2}$.

\end{thm}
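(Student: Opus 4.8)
The plan is to build $\nu$ first on continuous functions via the Riesz--Markov representation theorem, and then to promote the resulting identity to all bounded Borel functions using the bounded-convergence axiom of the Borel functional calculus.

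First I would fix $\xi\in\mathcal{H}$ and define $\Lambda\colon C(\sigma(T))\to\C$ by $\Lambda(f)=\ip{f(T)\xi,\xi}$, where $f(T)=\pi(f)$ is the functional calculus supplied by the preceding theorem. This $\Lambda$ is linear, and it is positive: if $f\geq 0$ on $\sigma(T)$, then the monotonicity clause of the Borel functional calculus, applied to the pair consisting of the zero function and $f$, gives $0=\pi(0)\leq\pi(f)=f(T)$, whence $\Lambda(f)=\ip{f(T)\xi,\xi}\geq 0$. Since $\sigma(T)$ is a compact subset of $\R$ (it lies in $[-\|T\|_{\infty},\|T\|_{\infty}]$), the Riesz--Markov theorem produces a unique finite Radon measure $\nu$ on $\sigma(T)$ with $\Lambda(f)=\int f\,d\nu$ for every $f\in C(\sigma(T))$.

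Next I would extend this identity from $C(\sigma(T))$ to all of $\Bor^{\infty}(\sigma(T))$. Let $\mathcal{A}$ be the set of $f\in\Bor^{\infty}(\sigma(T))$ for which $\ip{f(T)\xi,\xi}=\int f\,d\nu$; by construction $C(\sigma(T))\subseteq\mathcal{A}$. The key point is that $\mathcal{A}$ is closed under uniformly bounded pointwise limits: if $f_{n}\in\mathcal{A}$ with $\sup_{n}\|f_{n}\|<\infty$ and $f_{n}\to f$ pointwise on $\sigma(T)$, then on the one hand $\pi(f_{n})\to\pi(f)$ in the strong operator topology by the fourth axiom of the functional calculus, so $\ip{f_{n}(T)\xi,\xi}\to\ip{f(T)\xi,\xi}$ by continuity of the inner product, while on the other hand $\int f_{n}\,d\nu\to\int f\,d\nu$ by dominated convergence (dominated by the constant $\sup_{n}\|f_{n}\|$, which is integrable since $\nu$ is finite). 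A functional monotone class argument, using that $\mathcal{A}$ is a vector space closed under such limits and contains the multiplicatively closed, $\sigma$-algebra-generating family $C(\sigma(T))$, then forces $\mathcal{A}=\Bor^{\infty}(\sigma(T))$, which is exactly the desired formula.

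Finally, applying the formula to the constant function $1$ and using $\pi(1)=1$ yields $\nu(\sigma(T))=\int 1\,d\nu=\ip{\xi,\xi}=\|\xi\|^{2}$, and uniqueness of $\nu$ is inherited directly from the uniqueness clause of Riesz--Markov, since any two Borel measures agreeing against all continuous functions on $\sigma(T)$ coincide. I expect the extension step to be the only genuine obstacle: the passage from continuous to bounded Borel test functions is precisely where the bounded-convergence axiom does the essential work, and one must phrase the closure properties of $\mathcal{A}$ with enough care (e.g. treating indicators of Borel sets via bounded pointwise approximation) for the monotone class theorem to apply cleanly.
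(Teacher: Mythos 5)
Your argument is correct and is the standard derivation of the spectral measure from the Borel functional calculus: positivity plus Riesz--Markov on $C(\sigma(T))$, followed by a bounded-convergence/monotone class extension to $\Bor^{\infty}(\sigma(T))$ using the fourth axiom of the functional calculus. The paper states this theorem as background without proof, and the remark immediately following it (characterizing $\nu$ via Stone--Weierstrass and the Riesz representation theorem) points to exactly the route you took, so your write-up simply supplies the standard details, and does so correctly.
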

Alternatively, we can characterize $\nu$ as above by saying that $\nu(E)=\ip{1_{E}(T)\xi,\xi}=\|1_{E}(T)\xi\|^{2}$ for every Borel $E\subseteq \R.$ Perhaps more concretely, the fact that $\nu$ is compactly supported allows us to use the Stone-Weierstrass and Riesz representation theorems to say that $\nu$ is the unique measure satisfying
\[\int t^{n}\,d\nu(t)=\ip{T^{n}\xi,\xi}\mbox{ for every $n\in \N$.}\]
Of particular interest is the case $\xi=\hat{1}.$ In this case, we will use $\mu_{x}$ for the unique measure satisfying
\[\tau(f(x))=\ip{f(x)\hat{1},\hat{1}}=\int f\,d\mu_{x}\]
for all $f\in \Bor^{\infty}(\sigma(x)).$ The following is a well known result, but we isolate it because we will use it rather frequently in the appendix and because we are unable to find a short reference for it in the literature.

\begin{prop}\label{P:characterizing injectivity appendix}
Let $(M,\tau)$ be a tracial von Neumann algebra, and let $x\in M.$ Then the following are equivalent:
\begin{enumerate}
    \item $x$ is injective as an operator on $L^{2}(M,\tau)$, \label{I:injective as op app}
    \item $1_{\{0\}}(|x|)=0,$ \label{I:injective as zero proj app}
    \item $\mu_{|x|}(\{0\})=0.$ \label{I:injective as measure app}
\end{enumerate}

\end{prop}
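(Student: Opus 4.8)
The plan is to route everything through the positive operator $|x|=(x^{*}x)^{1/2}$ and its spectral projection $p:=1_{\{0\}}(|x|)\in M$, and to establish the chain (\ref{I:injective as op app})$\Leftrightarrow$(\ref{I:injective as zero proj app})$\Leftrightarrow$(\ref{I:injective as measure app}). The starting observation is that injectivity of $x$ on $L^{2}(M,\tau)$ is the same as injectivity of $|x|$. Indeed, for $\xi\in L^{2}(M,\tau)$, using that left multiplication by $x^{*}$ is the $\|\cdot\|_{2}$-adjoint of left multiplication by $x$, together with self-adjointness of $|x|$,
\[
\|x\xi\|_{2}^{2}=\langle x^{*}x\xi,\xi\rangle=\langle |x|^{2}\xi,\xi\rangle=\||x|\xi\|_{2}^{2}.
\]
Hence $x\xi=0$ if and only if $|x|\xi=0$, so it suffices to relate injectivity of $|x|$ to the condition $p=0$.

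Next I would identify $\ker(|x|)$ with the range of $p$, working directly on $L^{2}(M,\tau)$; this is legitimate because $|x|$ and $p$ both lie in $M$ and act on $L^{2}(M,\tau)$ by left multiplication, so the Borel functional calculus and Spectral Theorem stated above apply verbatim with $\mathcal H$ replaced by $L^{2}(M,\tau)$. On the one hand, since the function $t\mapsto t\,1_{\{0\}}(t)$ is identically zero, functional calculus gives $|x|p=0$, so $\operatorname{range}(p)\subseteq\ker(|x|)$. On the other hand, given $\xi$ with $|x|\xi=0$, let $\nu_{\xi}$ be the measure from the Spectral Theorem, so that $\||x|\xi\|_{2}^{2}=\int t^{2}\,d\nu_{\xi}(t)=0$. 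As $\nu_{\xi}\geq 0$ this forces $\nu_{\xi}$ to be concentrated at $0$, whence $\langle (1-p)\xi,\xi\rangle=\nu_{\xi}(\sigma(|x|)\setminus\{0\})=0$; since $1-p$ is a projection this yields $(1-p)\xi=0$, i.e. $\xi=p\xi\in\operatorname{range}(p)$. Therefore $\ker(|x|)=\operatorname{range}(p)$, and because left multiplication is a faithful representation of $M$ on $L^{2}(M,\tau)$, injectivity of $|x|$ is equivalent to $p=0$ in $M$. This gives (\ref{I:injective as op app})$\Leftrightarrow$(\ref{I:injective as zero proj app}).

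Finally, for (\ref{I:injective as zero proj app})$\Leftrightarrow$(\ref{I:injective as measure app}) I would invoke the defining property of $\mu_{|x|}$ with the bounded Borel function $f=1_{\{0\}}$, which reads
\[
\mu_{|x|}(\{0\})=\int 1_{\{0\}}\,d\mu_{|x|}=\tau\bigl(1_{\{0\}}(|x|)\bigr)=\tau(p).
\]
Since $p$ is a projection we have $\tau(p)=\tau(p^{*}p)$, and faithfulness of $\tau$ gives $\tau(p^{*}p)=0$ if and only if $p=0$. Hence $\mu_{|x|}(\{0\})=0$ if and only if $p=0$, closing the equivalence.

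All three implications are elementary consequences of the spectral calculus and faithfulness of $\tau$; the only place that warrants a moment of care is the identification $\ker(|x|)=\operatorname{range}(p)$ \emph{on $L^{2}(M,\tau)$} rather than on the ambient Hilbert space $\mathcal H$, and this is resolved simply by noting that $|x|,p\in M$ act on $L^{2}(M,\tau)$ by left multiplication, so no separate argument is needed.
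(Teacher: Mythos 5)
Your proof is correct and uses essentially the same ingredients as the paper's: the functional-calculus identity $|x|\,1_{\{0\}}(|x|)=0$, the spectral measure of a vector to handle the converse direction, and $\mu_{|x|}(\{0\})=\tau(1_{\{0\}}(|x|))$ together with faithfulness of $\tau$ for the equivalence of (2) and (3). Packaging the first equivalence as the identification $\ker(|x|)=\operatorname{range}(1_{\{0\}}(|x|))$ is only a cosmetic reorganization of the paper's two implications, so there is nothing to flag.
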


\begin{proof}
(\ref{I:injective as op app}) implies (\ref{I:injective as zero proj app}): Suppose $x$ is injective as an operator on $L^{2}(M,\tau)$ and let $\xi\in L^{2}(M,\tau).$ Then
\[\|x1_{\{0\}}(|x|)\xi\|_{2}=\||x|1_{\{0\}}(|x|)\xi\|_{2}=0,\]
the last equality following as the function $t\mapsto t1_{\{0\}}(t)$ is identically zero. By injectivity of $x,$ it follows that $1_{\{0\}}(|x|)\xi=0.$ So we have shown that $1_{\{0\}}(|x|)=0.$

(\ref{I:injective as zero proj app}) implies (\ref{I:injective as measure app}): This follows from the fact that $\mu_{|x|}(\{0\})=\tau(1_{\{0\}}(|x|)).$

(\ref{I:injective as measure app}) implies (\ref{I:injective as zero proj app}): This follows from the fact that $\tau$ is faithful.

(\ref{I:injective as zero proj app}) implies (\ref{I:injective as op app}): Suppose $1_{\{0\}}(|x|)=0.$  Let $\xi\in \mathcal{H}$ with $\xi\ne 0.$ Let $\nu$ be the measure on $\sigma(|x|)$ with
\[\ip{f(|x|)\xi,\xi}=\int f\,d\nu,\mbox{ for all $f\in \Bor^{\infty}(\sigma(x))$}.\]
Since $1_{\{0\}}(|x|)=0,$ we have that $1=1-1_{\{0\}}(|x|)=1_{(0,\infty)}(|x|).$ Thus
\[\|x\xi\|_{2}^{2}=\ip{|x|^{2}\xi,\xi}=\ip{|x|^{2}1_{(0,\infty)}(|x|)\xi,\xi}=\int_{(0,\infty)}t^{2}\,d\nu(t).\]
Since $\nu(\{0\})=\ip{1_{\{0\}}(|x|)\xi,\xi}=0,$ and $\nu([0,\infty))=\|\xi\|_{2}^{2}>0,$ the above integral must be positive. So $x$ has trivial kernel, and is thus injective.

\end{proof}

\subsection{Noncommutative $L^{p}$-spaces and the Proof of Lemma \ref{L:sot convergence} }\label{S:nclp}
For the proof of Lemma \ref{L:sot convergence}, we will use the notion of noncommutative $L^{p}$-spaces.

\begin{defn}
Let $(M,\tau)$ be a tracial von Neumann algebra and $x\in M.$ For $1\leq p<\infty,$ we define the noncommutative $L^{p}$-norm of $x$ with respect to $\tau$ by
\[\|x\|_{p}=\tau(|x|^{p})^{1/p}.\]
\end{defn}
It is a non-obvious fact that $\|\cdot\|_{p}$ is indeed a norm on $M$ (see \cite[Theorem IX.2.13]{TakesakiII} or \cite[Theorem 2.1.6]{silva2018lecture}). Given $x\in M,$ we use $\|x\|_{\infty}$ for the operator norm of $x$ regarded as an operator on $L^{2}(M,\tau)$ (recall that this is the same as the operator norm of $x$ represented on the original Hilbert space $M$ is given on).

Moreover, we still have H\o lder's inequality in the noncommutative context. Namely, if $p,q,r\in [1,\infty]$ with $\frac{1}{p}+\frac{1}{q}=\frac{1}{r},$ then for $x,y\in M$ one has
\[\|xy\|_{r}\leq \|x\|_{p}\|y\|_{q}.\]
See \cite[Corollaire 3]{DixmierLp} or \cite[Theorem 2.1.5]{silva2018lecture} for a proof.

\begin{prop}\label{P:more general sot convergence}
Let $(M,\tau)$ be a tracial von Neumann algebra. Suppose that $x\in M$ with $\|x\|_{\infty}\leq 1.$ Suppose that  $x^{*}x$ has no nonzero fixed vectors when acting on $L^{2}(M,\tau).$ Then
\[\|x^{n}\xi\|_{2}\to_{n\to\infty} 0\]
for every $\xi\in L^{2}(M,\tau).$

\end{prop}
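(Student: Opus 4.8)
The plan is to work entirely on $\mathcal{H}=L^{2}(M,\tau)$, where $x$ acts by left multiplication as a contraction since $\|x\|_{\infty}\leq 1$. The first observation is that for each $\xi$ the sequence $\|x^{n}\xi\|_{2}$ is non-increasing, because $\|x^{n+1}\xi\|_{2}=\|x(x^{n}\xi)\|_{2}\leq\|x\|_{\infty}\|x^{n}\xi\|_{2}\leq\|x^{n}\xi\|_{2}$. Equivalently, the positive operators $A_{n}=(x^{*})^{n}x^{n}$ satisfy $0\leq A_{n+1}\leq A_{n}\leq 1$ in the operator order, so this decreasing sequence of positive contractions converges in the strong operator topology to a positive operator $P$ with $0\leq P\leq 1$. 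Since each $A_{n}\in M$ and $M$ is closed in the strong operator topology, $P\in M$. Moreover $\ip{P\xi,\xi}=\lim_{n}\|x^{n}\xi\|_{2}^{2}$ for every $\xi$, so the conclusion $\|x^{n}\xi\|_{2}\to 0$ for all $\xi$ is \emph{exactly} the assertion that $P=0$, and this is what I would aim to prove.

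Next I would establish the invariance identity $x^{*}Px=P$. This comes from a direct quadratic-form computation: for every $\xi$ we have $\ip{x^{*}Px\,\xi,\xi}=\ip{Px\xi,x\xi}=\lim_{n}\|x^{n}(x\xi)\|_{2}^{2}=\lim_{n}\|x^{n+1}\xi\|_{2}^{2}=\ip{P\xi,\xi}$, and since both $x^{*}Px$ and $P$ are self-adjoint and agree on all quadratic forms they coincide. Applying the trace and using traciality then gives $\tau(P)=\tau(x^{*}Px)=\tau(Pxx^{*})$, hence $\tau\big(P(1-xx^{*})\big)=0$. Here $P\geq 0$ and, because $\|x^{*}\|_{\infty}=\|x\|_{\infty}\leq 1$ forces $xx^{*}\leq 1$, also $1-xx^{*}\geq 0$. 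Faithfulness of $\tau$ then upgrades the scalar identity to the operator identity $P(1-xx^{*})=0$, since $\tau\big(P(1-xx^{*})\big)=\|P^{1/2}(1-xx^{*})^{1/2}\|_{2}^{2}$. Taking adjoints gives $P=xx^{*}P$, so the range of $P$ consists of fixed vectors of $xx^{*}$.

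Finally I would convert the hypothesis on $x^{*}x$ into the needed statement about $xx^{*}$. If $\eta\neq 0$ satisfied $xx^{*}\eta=\eta$, then $\zeta=x^{*}\eta$ would satisfy $\|\zeta\|_{2}^{2}=\ip{xx^{*}\eta,\eta}=\|\eta\|_{2}^{2}>0$ and $x^{*}x\zeta=x^{*}(xx^{*}\eta)=x^{*}\eta=\zeta$, producing a nonzero fixed vector of $x^{*}x$ and contradicting the hypothesis. Thus $xx^{*}$ has no nonzero fixed vectors either, so the range of $P$ is trivial, $P=0$, and the proof is complete.

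I expect the genuinely delicate points to be bookkeeping rather than conceptual: verifying that the decreasing sequence $A_{n}$ converges in the strong operator topology and that its limit lies in $M$, and being careful that faithfulness yields the operator identity $P(1-xx^{*})=0$ rather than merely the vanishing of its trace. The conceptual heart — and the place where the tracial assumption is indispensable — is the step $\tau(x^{*}Px)=\tau(Pxx^{*})$, which trades the $x^{*}x$ implicit in $P$ for an $xx^{*}$ and thereby lets the spectral hypothesis on $x^{*}x$ be applied through the bijection $\eta\mapsto x^{*}\eta$ between fixed vectors.
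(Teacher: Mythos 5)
Your proof is correct, and it takes a genuinely different route from the paper's. The paper reduces the claim to showing $\|x^{n}\hat{1}\|_{2}\to 0$: since $\|x^{n}\|_{\infty}\leq 1$, the set $\{\xi:\|x^{n}\xi\|_{2}\to 0\}$ is a closed subspace, and the bound $\|x^{n}y\|_{2}\leq \|x^{n}\|_{2}\|y\|_{\infty}$ reduces everything to the single vector $\hat{1}$; it then bounds $\|x^{n}\|_{2}\leq \|x\|_{2n}^{n}=\left(\int t^{2n}\,d\mu_{|x|}(t)\right)^{1/2}$ by iterated noncommutative H\"older and finishes by observing that the fixed-vector hypothesis forces $\mu_{|x|}(\{1\})=0$, so the integral tends to $0$ by dominated convergence. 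You instead form the strong-operator limit $P$ of the decreasing positive contractions $(x^{*})^{n}x^{n}$, prove the invariance $x^{*}Px=P$ by polarization, use traciality and faithfulness to upgrade $\tau\big(P(1-xx^{*})\big)=0$ to $xx^{*}P=P$, and transport fixed vectors of $xx^{*}$ to fixed vectors of $x^{*}x$ via $\eta\mapsto x^{*}\eta$. Every step checks out: the monotone SOT convergence of $(A_{n})$ is the standard Vigier-type fact (worth a one-line justification via $\|(A_{n}-A_{m})\xi\|^{2}\leq \|A_{n}-A_{m}\|_{\infty}\,\ip{(A_{n}-A_{m})\xi,\xi}$ for $n\leq m$), and the identity $\tau\big(P(1-xx^{*})\big)=\|P^{1/2}(1-xx^{*})^{1/2}\|_{2}^{2}$ correctly converts the trace statement into an operator statement. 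What each approach buys: yours avoids noncommutative $L^{p}$-spaces, H\"older, and the spectral measure $\mu_{|x|}$ entirely, working purely with operators on the Hilbert space; the paper's is more quantitative in spirit, since it expresses $\|x^{n}\|_{2}$ in terms of $\int t^{2n}\,d\mu_{|x|}$ and thus links the decay rate to the spectral behavior of $|x|$ near $1$, which is the kind of information the rest of the appendix trades on. Both use the trace essentially — the paper through H\"older and the reduction to $\hat{1}$, you through $\tau(x^{*}Px)=\tau(Pxx^{*})$ and faithfulness.
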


\begin{proof}
Note that $\|x^{n}\|_{\infty}\leq 1$ for every natural number $n.$ This uniform estimate implies that
\[\{\xi:\|x^{n}\xi\|_{2}\to_{n\to\infty}0\}\]
is a closed, linear subspace of $L^{2}(M,\tau).$ So it suffices to show that it contains $\{\hat{y}:y\in M\}.$ By the estimate
\[\|x^{n}\hat{y}\|_{2}=\|x^{n}y\|_{2}\leq \|x^{n}\|_{2}\|y\|_{\infty},\]
it suffices to show that $\|x^{n}\|_{2}\to_{n\to\infty}0.$
By repeated applications of the noncommutative H\o lder inequality we have
\[\|x^{n}\|_{2}\leq \|x\|_{2n}^{n}=\tau(|x|^{2n})^{1/2}=\left(\int t^{2n}\,d\mu_{|x|}(t)\right)^{1/2}.\]
The assumption that $x^{*}x$ has no nonzero fixed vectors when acting on $L^{2}(M,\tau)$ means that $1-x^{*}x$ is injective.  Since $\|x\|_{\infty}\leq 1,$ we know that $0\leq x^{*}x\leq 1.$ So $|1-x^{*}x|=1-x^{*}x.$ Hence by Proposition \ref{P:characterizing injectivity appendix}, we know
$1_{\{1\}}(x^{*}x)=1_{\{0\}}(1-x^{*}x)=0.$ Thus $\mu_{|x|}(\{1\})=0.$ Since $\|x\|_{\infty}\leq 1$ we have that $\mu_{|x|}([0,1])=1$ and so $\mu_{|x|}([0,1))=1.$ Thus
\[\int t^{2n}\,d\mu_{|x|}(t)=\int_{[0,1)}t^{2n}\,d\mu_{|x|}(t)\to_{n\to\infty}0,\]
by the dominated convergence theorem.

\end{proof}

As an application, we deduce Lemma \ref{L:sot convergence}.

\begin{proof}[Proof of Lemma \ref{L:sot convergence}]
Setting $M=L(G)$, and $\tau=\ip{\cdot \delta_{1},\delta_{1}}$ we have a canonical isomorphism $L^{2}(M,\tau)\cong \ell^{2}(G)$ which is equivariant with respect to the actions of $M$ and sends $\delta_{1}$ to $\hat{1}.$ So by Proposition \ref{P:more general sot convergence}, it suffices to show that $1-\lambda(x)^{*}\lambda(x)$ has no nonzero fixed vectors when acting on $\ell^{2}(G).$ Let $\nu=\widehat{x^{*}x}\in \Prob(G),$ and $S=\supp(\widehat{x}).$ Then $\supp(\nu)=S^{-1}S,$ and for all $\xi\in \ell^{2}(G)$ we have:
\[\sum_{g\in S^{-1}S}\|\lambda(g)\xi-\xi\|_{2}^{2}\nu(g)=2\|\xi\|_{2}^{2}-2\sum_{g\in S^{-1}S}\nu(g)\Re(\ip{\lambda(g)\xi,\xi}).\]
The fact that $\lambda(x)^{*}\lambda(x)$ is self-adjoint implies that $\nu(g)=\nu(g^{-1})$ for all $g\in G$ and from this it follows that
\[\sum_{g\in S^{-1}S}\|\lambda(g)\xi-\xi\|_{2}^{2}=2\|\xi\|_{2}^{2}-2\sum_{g\in S^{-1}S}\nu(g)\ip{\lambda(g)\xi,\xi}=2\|\xi\|_{2}^{2}-2\ip{\lambda(x)^{*}\lambda(x)\xi,\xi}.\]
Suppose that $\xi\in \ell^{2}(G)$ is fixed by $\lambda(x)^{*}\lambda(x).$ Then by the above equation and the fact that $\supp(\nu)=S^{-1}S,$ it follows that $\lambda(g)\xi=\xi$ for all $g\in S^{-1}S.$ Hence $\lambda(g)\xi=\xi$ for all $g\in \ip{S^{-1}S}.$ Since $\ip{S^{-1}S}$ is infinite and $\xi\in \ell^{2}(G),$ it follows that $\xi=0.$

\end{proof}

\subsection{General Results on $L^{2}$ formal inverses in von Neumann algebras and the Proof of Lemma \ref{C:reductoin to positive case}}\label{S:general L2 inverses}

In this section, we prove Lemma \ref{C:reductoin to positive case}. In fact, we will show that a version of Lemma \ref{C:reductoin to positive case} holds in an arbitrary tracial von Neumann algebra. To do this, we first generalize the notion of $\ell^{2}$ formal inverses from the case of the group ring to a general tracial von Neumann algebras.
\begin{defn}
Let $(M,\tau)$ be a tracial von Neumann algebra, and $x\in M.$ We say that $x$ has an \emph{$L^{2}$ formal inverse} if there is a $\xi\in L^{2}(M,\tau)$ with $x\xi=\hat{1}.$
\end{defn}

We recall (see \cite[VIII.3.11]{Conway}) the polar decomposition. Let $\mathcal{H}$ be a Hilbert space. We say that a $U\in B(\mathcal{H})$ is a \emph{partial isometry} if  $\|U\xi\|=\|\xi\|$ for all $\xi$ which are orthogonal to the kernel of $U.$ We leave it as an exercise to verify that if $U$ is a partial isometry, then $U^{*}U$ is the orthogonal projection onto the orthogonal complement of the kernel of $U,$ and $UU^{*}$ is the orthogonal projection onto to the image of $U.$ We call $U^{*}U$ the \emph{source projection} of $U$ and $UU^{*}$ the \emph{range projection} of $U.$
If $T\in B(\mathcal{H})$ then we can write $T=U|T|$ where $U\in \mathcal{B}(\mathcal{H})$ is a partial isometry whose source projection is the orthogonal projection onto the kernel of $T$ and whose range projection is the orthogonal projection onto the closure of the image of $T.$ Moreover, this decomposition is unique: if $W$ is another partial isometry whose source projection is the orthogonal projection onto the kernel of $T$ and whose range projection is the orthogonal projection onto the closure of the image of $T,$ and if $P$ is any positive operator with $T=WP$ then necessarily $W=U,$ and $P=|T|.$ See \cite[VIII.3.11]{Conway} for more details.

\begin{lem}\label{L:integral sublevel char app}
Let $(M,\tau)$ be tracial von Neumann algebra and $y\in M.$ Then the following are equivalent:
\begin{itemize}
    \item $y$ has an $L^{2}$ formal inverse,
    \item $y$ is injective and $\int_{(0,\infty)}t^{-2}\,d\mu_{|y|}(t)<\infty.$
\end{itemize}
Moreover, if $y$ has an $L^{2}$ formal inverse $\xi$, then
\[\xi=L^{2}-\lim_{\varepsilon\to 0}1_{(\varepsilon,\infty)}(|y|)|y|^{-1}u^{*}\hat{1}\]
where $y=u|y|$ is the polar decomposition of $y.$
\end{lem}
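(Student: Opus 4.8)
The plan is to transport everything to the standard representation $L^{2}(M,\tau)$ and work with the left–multiplication operator $L_y$. Writing $y=u|y|$ for the polar decomposition recalled above, left multiplication gives $L_y=L_uL_{|y|}$ with $L_u$ a partial isometry and $L_{|y|}\geq 0$; since $L_y^{*}L_y=L_{y^{*}y}=L_{|y|}^{2}$ one checks that $|L_y|=L_{|y|}$ and that this is the polar decomposition of $L_y$ on $L^{2}(M,\tau)$. The key structural input, used in both directions, is that $\tau$ is a trace: moving a factor of $y$ around shows $\tau((y^{*}y)^{n})=\tau((yy^{*})^{n})$ for all $n$, so $y^{*}y$ and $yy^{*}$ have the same $\tau$-distribution and hence so do $|y|$ and $|y^{*}|$; in particular $\mu_{|y|}=\mu_{|y^{*}|}$. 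Combined with Proposition \ref{P:characterizing injectivity appendix} (which identifies injectivity with $\mu_{|y|}(\{0\})=0$), this yields the finite-dimensional-flavoured fact that $y$ is injective on $L^{2}(M,\tau)$ if and only if it has dense range, i.e.\ $1_{\{0\}}(|y|)=0$ iff $1_{\{0\}}(|y^{*}|)=0$.

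For the forward implication, suppose $y\xi=\hat{1}$. Then $\hat{1}\in\operatorname{ran}(L_y)$ is orthogonal to $\ker L_{y^{*}}$, which is the range of the projection $1_{\{0\}}(|y^{*}|)$; this forces $1_{\{0\}}(|y^{*}|)\hat{1}=0$ and hence $1_{\{0\}}(|y^{*}|)=0$ by faithfulness of $\tau$. By the trace symmetry above $y$ is then injective, so $u^{*}u=1$ and $L_u$ is an isometry; applying $L_u^{*}$ to $u|y|\xi=\hat{1}$ gives $|y|\xi=u^{*}\hat{1}=:\zeta$. Since $L_{|y|}$ is an injective positive operator, $\xi$ is its (unbounded) inverse applied to $\zeta$, so $\|\xi\|_{2}^{2}=\int_{(0,\infty)}t^{-2}\,d\langle E_{|y|}(t)\zeta,\zeta\rangle$, where $E_{|y|}$ is the spectral resolution of $L_{|y|}$. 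The remaining point is to evaluate this measure: for Borel $B\subseteq(0,\infty)$ the indicator vanishes at $0$, so $u\,1_B(|y|)\,u^{*}=1_B(|y^{*}|)$ (a polar-decomposition identity valid for $f$ with $f(0)=0$), whence $\langle E_{|y|}(B)\zeta,\zeta\rangle=\langle 1_B(|y^{*}|)\hat{1},\hat{1}\rangle=\mu_{|y^{*}|}(B)=\mu_{|y|}(B)$. Therefore $\|\xi\|_{2}^{2}=\int_{(0,\infty)}t^{-2}\,d\mu_{|y|}(t)<\infty$, giving both injectivity and the integral bound.

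For the converse, assume $y$ injective and $\int_{(0,\infty)}t^{-2}\,d\mu_{|y|}(t)<\infty$. Set $\zeta=u^{*}\hat{1}$ and define $\xi=\int_{(0,\infty)}t^{-1}\,dE_{|y|}(t)\,\zeta$, equivalently $\xi=L^{2}\text{-}\lim_{\varepsilon\to 0}1_{(\varepsilon,\infty)}(|y|)\,|y|^{-1}u^{*}\hat{1}$. The measure computation of the previous paragraph shows $\int_{(0,\infty)}t^{-2}\,d\langle E_{|y|}(t)\zeta,\zeta\rangle=\int_{(0,\infty)}t^{-2}\,d\mu_{|y|}(t)<\infty$, so $\xi\in L^{2}(M,\tau)$ and the $\varepsilon$-limit converges. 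Since $1_{\{0\}}(|y|)=0$ we get $|y|\xi=E_{|y|}((0,\infty))\zeta=\zeta$, hence $y\xi=u|y|\xi=uu^{*}\hat{1}$. Finally $uu^{*}=1_{(0,\infty)}(|y^{*}|)$, and injectivity of $y$ forces $1_{\{0\}}(|y^{*}|)=0$ by the trace symmetry, so $uu^{*}=1$ and $y\xi=\hat{1}$. Injectivity of $L_y$ gives uniqueness of $\xi$, and the displayed $\varepsilon$-limit is exactly the asserted formula.

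The main obstacle---and the only place finiteness of $\tau$ is genuinely used---is that the polar part $u$ is a priori only a partial isometry, so solving $u|y|\xi=\hat{1}$ produces $y\xi=\widehat{uu^{*}}$ rather than $\hat{1}$. Promoting $u$ to a two-sided isometry (equivalently $uu^{*}=u^{*}u=1$) is precisely the statement that injectivity and dense range coincide, which fails in properly infinite algebras and here rests entirely on $\mu_{|y|}=\mu_{|y^{*}|}$ together with the faithful trace. A secondary technical point to handle with care is the transport identity $\langle E_{|y|}(B)u^{*}\hat{1},u^{*}\hat{1}\rangle=\mu_{|y|}(B)$ for $B\subseteq(0,\infty)$, where one must restrict to Borel sets avoiding $0$ so that the relation $u f(|y|)u^{*}=f(|y^{*}|)$ applies.
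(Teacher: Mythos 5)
Your proof is correct, and it follows the same skeleton as the paper's: in both arguments the candidate inverse is the net $1_{(\varepsilon,\infty)}(|y|)|y|^{-1}u^{*}\hat{1}$, whose convergence is equivalent to $\int_{(0,\infty)}t^{-2}\,d\mu_{|y|}(t)<\infty$. Where you genuinely diverge is in how the finiteness of $\tau$ is brought to bear at the two critical points. The paper gets $uu^{*}=1$ from $u^{*}u=1$ by citing the general fact that isometries in a finite von Neumann algebra are unitaries (Takesaki, Theorem V.2.4), and it evaluates $\|1_{(\varepsilon,\infty)}(|y|)|y|^{-1}u^{*}\hat 1\|_{2}$ by cycling $u$ through the trace, i.e.\ $\tau(uau^{*})=\tau(au^{*}u)$; it also outsources the injectivity of $y$ to \cite[Proposition 2.2]{MeWE}. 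You instead extract everything from the single moment identity $\tau((y^{*}y)^{n})=\tau((yy^{*})^{n})$, which gives $\mu_{|y|}=\mu_{|y^{*}|}$ and hence that $1_{\{0\}}(|y|)=0$ iff $1_{\{0\}}(|y^{*}|)=0$; this simultaneously yields injectivity of $y$ from $y\xi=\hat 1$ (via $1_{\{0\}}(|y^{*}|)\hat 1=0$ and faithfulness), the surjectivity $uu^{*}=1$, and, through the transport identity $u\,1_{B}(|y|)\,u^{*}=1_{B}(|y^{*}|)$ for $B\subseteq(0,\infty)$, the computation $\langle E_{|y|}(B)u^{*}\hat 1,u^{*}\hat 1\rangle=\mu_{|y|}(B)$. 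The payoff of your route is that it is self-contained (no appeal to the external references), at the cost of having to be careful that the transport identity only applies to Borel functions vanishing at $0$ — a point you correctly flag. One small stylistic remark: your identification of $L_y=L_uL_{|y|}$ as "the" polar decomposition of $L_y$ on $L^{2}(M,\tau)$ is not actually needed; all you use are the algebraic identities $y=u|y|$, $u^{*}u=1_{(0,\infty)}(|y|)$, $uu^{*}=1_{(0,\infty)}(|y^{*}|)$, which hold in $M$ and transfer to the standard representation.
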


We remark that  the notation $1_{(\varepsilon,\infty)}(|y|)|y|^{-1}$ is mildly abusive. It should be interpreted as $\phi_{\varepsilon}(|y|)$ , where $\phi_{\varepsilon}\colon [0,\infty)\to [0,\infty)$ is the \emph{bounded} Borel function  given by $\phi_{\varepsilon}(t)=1_{(\varepsilon,\infty)}(t)t^{-1}.$ We are not requiring that $|y|$ be invertible as a bounded operator to make sense of $1_{(\varepsilon,\infty)}(|y|)|y|^{-1}.$

\begin{proof}
Throughout the proof, let $y=u|y|$ be the polar decomposition to $y.$ For $\varepsilon>0,$ let  $a_{\varepsilon}=1_{(\varepsilon,\infty)}(|y|)|y|^{-1}u^{*}$.

First, suppose that $y$ has an $L^{2}$ formal inverse $\xi.$ By the same arguments as in \cite[Proposition 2.2]{MeWE}, this implies that $y$ is injective as a bounded operator on $L^{2}(M,\tau).$ Let $y=u|y|$ is the Polar decomposition to $y.$  We compute:
\[\|\xi-a_{\varepsilon}\widehat{1}\|_{2}=\|(1-a_{\varepsilon}y)\xi\|_{2}=\|1_{[0,\varepsilon)}(|y|)\xi\|_{2}.\]
By the spectral theorem, this implies that
\[\lim_{\varepsilon\to 0}\|\xi-a_{\varepsilon}\widehat{1}\|_{2}=\|1_{\{0\}}(|y|)\xi\|_{2}.\]
 As $y$ is injective, Proposition \ref{P:characterizing injectivity appendix} implies that $1_{\{0\}}(|y|)=0.$ So
\[\xi=L^{2}-\lim_{\varepsilon\to 0}a_{\varepsilon}\widehat{1}=L^{2}-\lim_{\varepsilon\to 0}1_{(\varepsilon,\infty)}(|y|)|y|^{-1}u^{*}\widehat{1}.\]
Thus
\[\|\xi\|_{2}=\lim_{\varepsilon\to 0}\tau(|a_{\varepsilon}|^{2})^{1/2}=\lim_{\varepsilon\to 0}\tau(u1_{(\varepsilon,\infty)}(|y|)|y|^{-2}u^{*})^{1/2}=\lim_{\varepsilon\to 0}\tau(1_{(\varepsilon,\infty)}(|y|)|y|^{-2}u^{*}u)^{1/2}.\]
Since $y=u|y|$ is the polar decomposition of $y,$ we know that $u^{*}u$ is the orthogonal projection onto the orthogonal complement of the kernel of $y.$ Since $y$ is injective, we deduce that $u^{*}u=1.$
So
\[\|\xi\|_{2}=\lim_{\varepsilon\to 0}\tau(1_{(\varepsilon,\infty)}(|y|)|y|^{-2})^{1/2}=\lim_{\varepsilon\to 0}\left(\int_{(\varepsilon,\infty)}t^{-2}\,d\mu_{|y|}(t)\right)^{1/2}=\left(\int_{(0,\infty)}t^{-2}\,d\mu_{|y|}(t)\right)^{1/2},\]
the last step following from the monotone convergence theorem. Thus $\int_{(0,\infty)}t^{-2}\,d\mu_{|y|}(t)<\infty$.

Conversely, suppose that $y$ is injective and that $\int_{(0,\infty)}t^{-2}\,d\mu_{|y|}(t)<\infty.$ Since $y$ is injective, we may argue as in the preceding paragraph to see that $u^{*}u=1.$ Since $M$ has a faithful, normal tracial state, it follows by \cite[Theorem V.2.4]{Taka} that $uu^{*}=1.$ So $u$ is a unitary. Let $\xi_{\varepsilon}=a_{\varepsilon}\hat{1}.$ For $0<\delta<\varepsilon$ we have
\[\|\xi_{\delta}-\xi_{\varepsilon}\|_{2}=\|1_{(\delta,\varespilon)}(|y|)|y|^{-1}\widehat{u^{*}}\|_{2}=
\|1_{(\delta,\varespilon)}(|y|)|y|^{-1}\hat{1}u^{*}\|_{2}=\|1_{(\delta,\varespilon)}(|y|)|y|^{-1}\|_{2},\]
the last equality following as $u$ is unitary. So
\[\|\xi_{\delta}-\xi_{\varepsilon}\|_{2}^{2}=\tau(|1_{(\delta,\varepsilon)}(|y|)|y|^{-1}u^{*}|^{2})=\tau(u|1_{(\delta,\varepsilon)}(|y|)|y|^{-2}u^{*})=\tau(1_{(\delta,\varepsilon)}(|y|)|y|^{-2}),\]
the last line following by the fact that $\tau$ is tracial and $u$ is a unitary.
Since $\int_{[0,\infty)}t^{-2}\,d\mu_{|y|}(t)<\infty,$ the dominated convergence theorem shows that $\xi_{\varespilon}$ is a Cauchy net. Thus, by completeness of $L^{2}(M,\tau),$ we may define $\xi=\lim_{\varepsilon\to 0}\xi_{\varepsilon}$ where the limit is taken in $\|\cdot\|_{2}.$ We then have that
\[y\xi=\lim_{\varepsilon\to 0}u1_{(\varepsilon,\infty)}(|y|)u^{*}\hat{1}.\]
By the spectral theorem, we know $y\xi=u1_{(0,\infty)}(|y|)u^{*}\hat{1}.$ Injectivity of $y$ and functional calculus imply that $1_{(0,\infty)}(|y|)=1-1_{\{0\}}(|y|)=1.$ So
\[y\xi=uu^{*}\hat{1}=\hat{1},\]
the last step following as $u$ is unitary.

\end{proof}

\begin{cor}\label{C:sublevel set characterization}
Let $(M,\tau)$ be tracial von Neumann algebra and suppose that $y\in M$ is injective as an operator on $L^{2}(M,\tau).$
Then $y$ has an $L^{2}$ formal inverse if and only if
\[\int_{0}^{\infty}\lambda \mu_{|y|}\left(\left(0,\frac{1}{\lambda}\right)\right)\,d\lambda<\infty.\]
\end{cor}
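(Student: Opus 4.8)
The plan is to deduce this immediately from Lemma \ref{L:integral sublevel char app}, which already contains the substantive analytic content. Since $y$ is assumed injective as an operator on $L^{2}(M,\tau)$, that lemma tells us $y$ has an $L^{2}$ formal inverse if and only if $\int_{(0,\infty)}t^{-2}\,d\mu_{|y|}(t)<\infty$. So the entire task reduces to showing that this last integral is finite precisely when $\int_{0}^{\infty}\lambda\,\mu_{|y|}((0,1/\lambda))\,d\lambda<\infty$. This is a purely measure-theoretic reformulation of the same quantity via a layer-cake representation, and it carries no further input from the von Neumann algebra structure.

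First I would record the layer-cake identity. For each fixed $t>0$,
\[t^{-2}=\int_{0}^{1/t}2\lambda\,d\lambda=\int_{0}^{\infty}2\lambda\,1_{\{\lambda<1/t\}}\,d\lambda,\]
since $\int_{0}^{1/t}2\lambda\,d\lambda=(1/t)^{2}$. All integrands here are nonnegative, so I may apply Tonelli's theorem to interchange the order of integration against $\mu_{|y|}$:
\[\int_{(0,\infty)}t^{-2}\,d\mu_{|y|}(t)=\int_{0}^{\infty}2\lambda\left(\int_{(0,\infty)}1_{\{t<1/\lambda\}}\,d\mu_{|y|}(t)\right)d\lambda=2\int_{0}^{\infty}\lambda\,\mu_{|y|}\left(\left(0,\tfrac{1}{\lambda}\right)\right)d\lambda.\]
Here the inner integral equals exactly $\mu_{|y|}((0,1/\lambda))$ because the integration variable $t$ already ranges over $(0,\infty)$ and the condition $t<1/\lambda$ cuts it down to the open interval $(0,1/\lambda)$.

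The two quantities therefore differ only by the harmless constant factor $2$, so one is finite exactly when the other is; combining this with the equivalence furnished by Lemma \ref{L:integral sublevel char app} completes the argument. There is no genuine obstacle in this corollary, as the heavy lifting was done in the preceding lemma. The only point requiring a moment of care is matching the open interval $(0,1/\lambda)$ with the strict inequality $t<1/\lambda$ appearing in the indicator, together with observing that $t=0$ is excluded throughout since $\mu_{|y|}$ is integrated over $(0,\infty)$ and injectivity of $y$ gives $\mu_{|y|}(\{0\})=0$ by Proposition \ref{P:characterizing injectivity appendix}. Everything else is a direct application of Tonelli's theorem.
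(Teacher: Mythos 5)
Your proposal is correct and follows essentially the same route as the paper: the paper also deduces the corollary from Lemma \ref{L:integral sublevel char app} by writing $t^{-2}=2\int_{0}^{1/t}\lambda\,d\lambda$ and interchanging the order of integration. Your version merely makes the Tonelli step and the handling of the point $t=0$ explicit, which the paper leaves as a ``direct computation.''
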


\begin{proof}
By direct computation
\[\int_{(0,\infty)}t^{-2}\,d\mu_{|y|}(t)=2\int_{(0,\infty)}\int_{0}^{1/t}\lambda\,d\lambda\,d\mu_{|y|}(t)=2\int_{0}^{\infty}
\lambda\mu_{|y|}\left(\left(0,\frac{1}{\lambda}\right)\right)\,d\mu_{|y|}(t).\]
\end{proof}
Note that, by definition, $\mu_{|y|}\left(\left(0,\frac{1}{\lambda}\right)\right)=\tau(1_{(0,\frac{1}{\lambda})}(|y|)).$
Recall that our overall goal is to show that if $x\in M$ and $\|x\|_{\infty}\leq 1,$ and if $1-\Re(x)$ has an $L^{2}$ formal inverse, then so does $1-x$. The above corollary will facilitate this goal because it turns out that we may effectively estimate $\tau(1_{(0,\frac{1}{\lambda})}(|1-x|))$ in terms of $\tau(1_{(0,\frac{1}{\lambda})}(1-\Re(x))).$ The main tool that helps us do this is the following, which is a rephrasing of \cite[Lemma IX.2.6]{TakesakiII}.

\begin{lem} Let $(M,\tau)$ be a tracial von Neumann algebra and $p,q$ orthogonal projections in $M.$ If $p(L^{2}(M,\tau))\cap (1-q)(L^{2}(M,\tau)))=\{0\},$ then $\tau(p)\leq \tau(q).$
\end{lem}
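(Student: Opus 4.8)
The plan is to reduce the statement to the Murray--von Neumann comparison theory of projections, using two standard facts: the parallelogram law (Kaplansky's formula) for projections in a von Neumann algebra, and the invariance of a trace under Murray--von Neumann equivalence. The argument is then a one-line computation with traces.

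First I would reinterpret the hypothesis spectrally. By definition, $p\wedge(1-q)$ is the orthogonal projection of $L^{2}(M,\tau)$ onto $p(L^{2}(M,\tau))\cap(1-q)(L^{2}(M,\tau))$, and since $M$ is a von Neumann algebra this projection again lies in $M$ (the projection lattice of $M$ is complete). Thus the assumption $p(L^{2}(M,\tau))\cap(1-q)(L^{2}(M,\tau))=\{0\}$ is exactly the statement that $p\wedge(1-q)=0$.

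Next I would invoke the parallelogram law: for any projections $e,f\in M$ there is a Murray--von Neumann equivalence
\[
e\vee f-f\;\sim\;e-e\wedge f,
\]
i.e.\ a partial isometry $u\in M$ with $u^{*}u=e-e\wedge f$ and $uu^{*}=e\vee f-f$. Applying this with $e=p$ and $f=1-q$, and using $p\wedge(1-q)=0$, yields
\[
p\vee(1-q)-(1-q)\;\sim\;p .
\]
Now I would pass to traces. Because $\tau$ is tracial, equivalent projections have equal trace: if $u^{*}u=a$ and $uu^{*}=b$ then $\tau(a)=\tau(u^{*}u)=\tau(uu^{*})=\tau(b)$. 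Hence $\tau(p\vee(1-q))-\tau(1-q)=\tau(p)$, and writing $\tau(1-q)=1-\tau(q)$ gives $\tau(p)=\tau(p\vee(1-q))+\tau(q)-1$. Since $p\vee(1-q)\leq 1$ we have $\tau(p\vee(1-q))\leq\tau(1)=1$, so
\[
\tau(p)=\tau(p\vee(1-q))+\tau(q)-1\leq\tau(q),
\]
which is the desired inequality.

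The only substantive input is the parallelogram law itself; once it is available the rest is routine, so there is no real obstacle here. This is why the lemma is quoted as a rephrasing of \cite[Lemma IX.2.6]{TakesakiII}: the content is precisely this packaging of the comparison theory, and the proof I would give simply makes the equivalence and the trace bookkeeping explicit.
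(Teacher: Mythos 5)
Your proof is correct. Note, however, that the paper does not actually prove this lemma: it is stated as a black box, being ``a rephrasing of \cite[Lemma IX.2.6]{TakesakiII},'' so there is no internal argument to compare against. Your derivation via Kaplansky's parallelogram law $e\vee f-f\sim e-e\wedge f$ (applied with $e=p$, $f=1-q$, so that the hypothesis becomes $p\wedge(1-q)=0$) together with the trace identity $\tau(u^{*}u)=\tau(uu^{*})$ is a standard and complete way to fill in that citation; every step checks out, including the observation that $p\wedge(1-q)$ lies in $M$ and the final bookkeeping $\tau(p)=\tau(p\vee(1-q))+\tau(q)-1\leq\tau(q)$. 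The only remark worth making is that one can bypass the parallelogram law entirely by taking the polar decomposition of $x=qp$ directly: its source projection is $p-p\wedge(1-q)=p$ and its range projection is a subprojection of $q$, so $p$ is Murray--von Neumann equivalent to a subprojection of $q$ and $\tau(p)\leq\tau(q)$ follows at once. This is essentially the argument hiding inside your appeal to the parallelogram law (which is itself proved by that same polar decomposition), so the two routes are the same idea packaged differently; yours makes the comparison-theory framework explicit, while the direct route is marginally shorter.
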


The following are the main estimates we will use in our proof of Lemma \ref{C:reductoin to positive case}.

\begin{lem}\label{L:nc subordination}
Let $(M,\tau)$ be a tracial von Neumann algebra.
\begin{enumerate}[(i)]
\item  Suppose that $a,b\in M$ with $0\leq a\leq b.$
Then $\mu_{b}([0,t))\leq \mu_{a}([0,t))$ for all $t>0.$
\label{I:monotonicityL2formalinverse}
\item Suppose that $x\in M$ and that $\|x\|_{\infty}\leq 1.$ Then $\mu_{|1-x|}([0,t))\leq \mu_{1-\Re(x)}([0,t))$ for all $t>0.$ \label{I:NC reverse triangle inequality}
\end{enumerate}
\end{lem}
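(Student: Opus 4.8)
The plan is to derive both inequalities from the preceding lemma (the rephrasing of \cite[Lemma IX.2.6]{TakesakiII}): if $p,q$ are projections in $M$ with $p(L^{2}(M,\tau))\cap(1-q)(L^{2}(M,\tau))=\{0\}$, then $\tau(p)\leq\tau(q)$. In each part I will take $p$ to be the spectral projection $1_{[0,t)}$ of the ``larger'' operator and $q$ the corresponding projection of the ``smaller'' one, so that, since $\mu_{x}([0,t))=\tau(1_{[0,t)}(x))$, the desired estimate becomes precisely $\tau(p)\leq\tau(q)$. The entire content is then to verify the range-intersection hypothesis, which I will do by a quadratic-form computation via the Spectral Theorem: a nonzero $\xi$ in $p(L^{2})$ forces the relevant form $\ip{\cdot\,\xi,\xi}$ to be strictly below $t\|\xi\|_{2}^{2}$, while a nonzero $\xi$ in $(1-q)(L^{2})$ forces a competing form to be at least $t\|\xi\|_{2}^{2}$, and these turn out to be incompatible.

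For part (\ref{I:monotonicityL2formalinverse}), set $p=1_{[0,t)}(b)$ and $q=1_{[0,t)}(a)$, so $1-q=1_{[t,\infty)}(a)$. If $\xi\neq 0$ lies in both ranges, then letting $\nu$ be the scalar measure attached to $b$ and $\xi$ by the Spectral Theorem, the fact that $\nu$ is supported on $[0,t)$ gives $\ip{b\xi,\xi}=\int_{[0,t)}s\,d\nu(s)<t\|\xi\|_{2}^{2}$, the inequality being strict because $\int_{[0,t)}(t-s)\,d\nu(s)>0$; likewise membership of $\xi$ in $1_{[t,\infty)}(a)(L^{2})$ gives $\ip{a\xi,\xi}\geq t\|\xi\|_{2}^{2}$. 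But $0\leq a\leq b$ yields $\ip{a\xi,\xi}\leq\ip{b\xi,\xi}$, so $t\|\xi\|_{2}^{2}\leq\ip{a\xi,\xi}\leq\ip{b\xi,\xi}<t\|\xi\|_{2}^{2}$, a contradiction. Hence the intersection is trivial and the lemma gives $\tau(p)\leq\tau(q)$, i.e. $\mu_{b}([0,t))\leq\mu_{a}([0,t))$.

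For part (\ref{I:NC reverse triangle inequality}), I first record that $\|x\|_{\infty}\leq 1$ forces $\|\Re(x)\|_{\infty}\leq 1$, so $1-\Re(x)\geq 0$ and its spectral projections partition the identity over $[0,\infty)$; this is the only role of the hypothesis $\|x\|_{\infty}\leq 1$, and it is what makes $\mu_{1-\Re(x)}([0,t))$ a genuine sublevel distribution. Now take $p=1_{[0,t)}(|1-x|)$ and $q=1_{[0,t)}(1-\Re(x))$, so $1-q=1_{[t,\infty)}(1-\Re(x))$. The key algebraic identity is $\ip{(1-\Re(x))\xi,\xi}=\Re\ip{(1-x)\xi,\xi}$, whence Cauchy--Schwarz gives $\ip{(1-\Re(x))\xi,\xi}\leq\|(1-x)\xi\|_{2}\,\|\xi\|_{2}$. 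If $\xi\neq 0$ lies in $p(L^{2})$, the spectral measure of $|1-x|$ at $\xi$ sits in $[0,t)$, so $\|(1-x)\xi\|_{2}^{2}=\ip{|1-x|^{2}\xi,\xi}<t^{2}\|\xi\|_{2}^{2}$, i.e. $\|(1-x)\xi\|_{2}<t\|\xi\|_{2}$; if moreover $\xi\in 1_{[t,\infty)}(1-\Re(x))(L^{2})$ then $\ip{(1-\Re(x))\xi,\xi}\geq t\|\xi\|_{2}^{2}$. Combining, $t\|\xi\|_{2}^{2}\leq\ip{(1-\Re(x))\xi,\xi}\leq\|(1-x)\xi\|_{2}\|\xi\|_{2}<t\|\xi\|_{2}^{2}$, again a contradiction, so the lemma yields $\mu_{|1-x|}([0,t))\leq\mu_{1-\Re(x)}([0,t))$.

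The main obstacle to keep in mind is that one cannot argue by operator monotonicity of the functional calculus. The scalar inequality $|1-z|\geq 1-\Re z$ (valid for $|z|\leq 1$) does \emph{not} lift to $|1-x|\geq 1-\Re(x)$ as operators, since $|1-x|^{2}-(1-\Re(x))^{2}=(\Im x)^{2}+i[\Re x,\Im x]$ carries a generally indefinite commutator term. The subordination must therefore be extracted at the level of trace-dimensions of spectral projections through the range-intersection lemma rather than from a pointwise operator bound; the only delicate bookkeeping is the strict-versus-nonstrict inequality arising from the half-open interval $[0,t)$ and, in part (\ref{I:NC reverse triangle inequality}), the reduction of the hypothesis $\|x\|_{\infty}\leq 1$ to the positivity of $1-\Re(x)$.
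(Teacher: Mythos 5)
Your proof is correct and follows essentially the same route as the paper's: the same choice of spectral projections $p=1_{[0,t)}(\cdot)$ for the larger operator and $q=1_{[0,t)}(\cdot)$ for the smaller one, the same verification via the Spectral Theorem that a nonzero vector in $p(L^{2}(M,\tau))\cap(1-q)(L^{2}(M,\tau))$ yields the contradiction $t\leq\cdot<t$, and the same appeal to the projection-comparison lemma from \cite[Lemma IX.2.6]{TakesakiII}. Your closing remark that the scalar inequality $|1-z|\geq 1-\Re z$ does not lift to an operator inequality is a sensible observation explaining why the trace-of-projections argument is needed, but it is not part of the paper's proof.
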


\begin{proof}

(\ref{I:monotonicityL2formalinverse}): Set $p=1_{[0,t)}(b)$,$q=1_{[0,t)}(a).$ Suppose $p(L^{2}(M,\tau))\cap (1-q)(L^{2}(M,\tau))\ne \varnothing.$ Let $\xi\in p(L^{2}(M,\tau))\cap (1-q)(L^{2}(M,\tau))$ with $\|\xi\|_{2}=1.$ By the spectral theorem, we may find compactly supported measures $\nu,\eta\in \Prob([0,\infty))$ with
\[\ip{f(a)\xi,\xi}=\int f\,d\nu\,\mbox{ and } \ip{f(b)\xi,\xi}=\int f\,d\eta\]
for all bounded, Borel functions $f\colon [0,\infty)\to \C.$
Since $p\xi=\xi,$ it follows that
\[\ip{b\xi,\xi}=\ip{b1_{[0,t)}(b)\xi,\xi}=\int_{[0,t)}s\,d\eta(s)<t.\]
Since $1-q=1_{[t,\infty)}(a)$ and $(1-q)\xi=\xi,$ we know that
\[\ip{a\xi,\xi}=\ip{a1_{[t,\infty)}(a)\xi,\xi}=\int_{[t,\infty)}s\,d\eta(s)\geq t\eta([t,\infty))=t\ip{(1-q)\xi,\xi}=t.\]
So we have
\[t\leq \ip{a\xi,\xi}\leq \ip{b\xi,\xi}<t,\]
a contradiction.

(\ref{I:NC reverse triangle inequality}): Set $p=1_{[0,t)}(|1-x|),$ and $q=1_{[0,t)}(1-\Re(x)).$ Suppose $p(L^{2}(M,\tau))\cap (1-q)(L^{2}(M,\tau))\ne \{0\}.$ Then we may find a $\xi\in p(L^{2}(M,\tau))\cap (1-q)(L^{2}(M,\tau))$ with $\|\xi\|_{2}=1.$ By the spectral theorem, we may choose  compactly supported $\eta,\nu\in \Prob([0,\infty))$ with
\[\ip{f(|1-x|)\xi,\xi}=\int f\,d\nu,\]
\[\ip{f(\Re(x))\xi,\xi}=\int f\,d\eta,\]
for all bounded, Borel $f\colon [0,\infty)\to \C.$ Since $p\xi=\xi,$ it follows that
\[\|(1-x)\xi\|_{2}^{2}=\ip{|1-x|^{2}\xi,\xi}=\ip{|1-x|^{2}p\xi,\xi}=\int_{[0,t)}s^{2}\,d\nu(s)<t^{2}.\]
On the other hand,
\[\|(1-x)\xi\|_{2}\geq \Re(\ip{(1-x)\xi,\xi})=\ip{\Re(1-x)\xi,\xi}=\ip{\Re(1-x)(1-q)\xi,\xi},\]
the last equality following as $(1-q)\xi=\xi.$ By definition of $q$
\[\|(1-x)\xi\|_{2}^{2}\geq \int_{[t,\infty]}s^{2}\,d\eta(s)\geq t^{2}\eta([t,\infty))=t^{2}\ip{(1-q)\xi,\xi}=t^{2},\]
where in the last equality we again use that $(1-q)\xi=\xi.$
So we have shown that
\[t<\|(1-x)\xi\|\leq t,\]
 a contradiction.
\end{proof}

We now prove the main result of this section.

\begin{lem}\label{lem:general reduction to sa case appendix}
Let $(M,\tau)$ be a tracial von Neumann algebra and $x\in M$ with $\|x\|_{\infty}\leq 1.$ If $1-\Re(x)$ has an $L^{2}$ formal inverse, then so does $1-x.$
\end{lem}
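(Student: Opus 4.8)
The plan is to reduce the statement to the sublevel-set criterion of Corollary \ref{C:sublevel set characterization} and then import the comparison of spectral distributions already supplied by Lemma \ref{L:nc subordination}(ii). First I would record the standing facts. Since $\|x\|_{\infty}\le 1$, the self-adjoint element $\Re(x)=\tfrac{x+x^{*}}{2}$ satisfies $\|\Re(x)\|_{\infty}\le 1$, so $\sigma(\Re(x))\subseteq[-1,1]$ and hence $1-\Re(x)\ge 0$ with $|1-\Re(x)|=1-\Re(x)$. Because $1-\Re(x)$ is assumed to have an $L^{2}$ formal inverse, Lemma \ref{L:integral sublevel char app} tells us it is injective as an operator on $L^{2}(M,\tau)$.

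The first genuine step is to show that $1-x$ is injective, since both characterizations of having an $L^{2}$ formal inverse require this. Suppose $\xi\in L^{2}(M,\tau)$ satisfies $(1-x)\xi=0$. Using that $\langle \Re(x)\xi,\xi\rangle=\Re\langle x\xi,\xi\rangle$, I would compute
\[\langle(1-\Re(x))\xi,\xi\rangle=\Re\langle(1-x)\xi,\xi\rangle=0.\]
Since $1-\Re(x)\ge 0$, this forces $\|(1-\Re(x))^{1/2}\xi\|_{2}^{2}=0$, hence $(1-\Re(x))\xi=0$, and injectivity of $1-\Re(x)$ gives $\xi=0$. Thus $1-x$ is injective; combined with injectivity of $1-\Re(x)$, Proposition \ref{P:characterizing injectivity appendix} shows that both $\mu_{|1-x|}$ and $\mu_{1-\Re(x)}$ assign zero mass to $\{0\}$.

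With injectivity in hand, I would invoke Corollary \ref{C:sublevel set characterization}: the injective element $1-x$ has an $L^{2}$ formal inverse if and only if $\int_{0}^{\infty}\lambda\,\mu_{|1-x|}((0,\tfrac1\lambda))\,d\lambda<\infty$. The key input is Lemma \ref{L:nc subordination}(ii), which gives $\mu_{|1-x|}([0,t))\le \mu_{1-\Re(x)}([0,t))$ for every $t>0$; since both measures kill $\{0\}$, the same inequality holds for the open intervals $(0,t)$. Substituting $t=1/\lambda$ and integrating against $\lambda\,d\lambda$ yields
\[\int_{0}^{\infty}\lambda\,\mu_{|1-x|}\!\left(\left(0,\tfrac1\lambda\right)\right)d\lambda\le \int_{0}^{\infty}\lambda\,\mu_{1-\Re(x)}\!\left(\left(0,\tfrac1\lambda\right)\right)d\lambda,\]
and the right-hand side is finite by Corollary \ref{C:sublevel set characterization} applied to $1-\Re(x)$ (using $|1-\Re(x)|=1-\Re(x)$ and that it has an $L^{2}$ formal inverse). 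Hence the left-hand integral is finite and $1-x$ has an $L^{2}$ formal inverse.

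The substantive content — the comparison of spectral distribution functions — is already carried by Lemma \ref{L:nc subordination}(ii), so within this proof the only point requiring care is the injectivity step: one must rule out a kernel vector for $1-x$ before either integral criterion applies, and this is precisely where positivity of $1-\Re(x)$ and the hypothesis that it is injective enter. I do not anticipate any serious obstacle beyond checking that the sublevel-set inequality transfers from half-open to open intervals, which is handled by the injectivity of both operators via Proposition \ref{P:characterizing injectivity appendix}.
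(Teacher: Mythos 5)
Your proposal is correct and follows essentially the same route as the paper: establish injectivity of $1-x$, then combine Lemma \ref{L:nc subordination}(\ref{I:NC reverse triangle inequality}) with the sublevel-set criterion of Corollary \ref{C:sublevel set characterization}, using $|1-\Re(x)|=1-\Re(x)$. The only (harmless) difference is in the injectivity step, where you use positivity of $1-\Re(x)$ and the vanishing of the quadratic form $\ip{(1-\Re(x))\xi,\xi}$, whereas the paper first deduces $x^{*}\xi=\xi$ from $\|x^{*}\|_{\infty}\leq 1$; both are valid one-line arguments.
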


\begin{proof}

Assume $1-\Re(x)$ has an $L^{2}$ formal inverse. Recall that this implies that $1-\Re(x)$ is injective. We first show that $1-x$ is injective. Suppose that $\xi\in L^{2}(M,\tau)$ and $x\xi=\xi.$ Then
\[\|x^{*}\xi-\xi\|_{2}^{2}=\|x^{*}\xi\|_{2}^{2}+\|\xi\|_{2}^{2}-2\Re(\ip{x^{*}\xi,\xi})=\|x^{*}\xi\|_{2}^{2}+\|\xi\|_{2}^{2}-2\Re(\ip{\xi,x\xi})=\|x^{*}\xi\|_{2}^{2}-\|\xi\|_{2}^{2}\leq 0,\]
the last step following as $\|x^{*}\|_{\infty}=\|x\|_{\infty}\leq 1.$ So $x^{*}\xi=\xi$, and thus $\Re(x)\xi=\xi.$ By injectivity of $1-\Re(x),$ it follows that $\xi=0$. So $1-x$ is injective.  It thus remains to show that $\int_{0}^{\infty}\lambda\mu_{|1-x|}\left(\left(0,\frac{1}{\lambda}\right)\right)\,d\lambda<\infty.$


By Lemma \ref{L:nc subordination} (\ref{I:NC reverse triangle inequality}), we know
\[\mu_{|1-x|}\left(\left[0,\frac{1}{\lambda}\right)\right)\leq \mu_{1-\Re(x)}\left(\left[0,\frac{1}{\lambda}\right)\right)=\mu_{1-\Re(x)}\left(\left(0,\frac{1}{\lambda}\right)\right)\]
for every $\lambda>0.$ Since we just saw that $1-x$ is injective, we know by Proposition \ref{P:characterizing injectivity appendix} that $\mu_{|1-x|}(\{0\})=0.$  So
\[\mu_{|1-x|}\left(\left(0,\frac{1}{\lambda}\right)\right)\leq \mu_{1-\Re(x)}\left(\left(0,\frac{1}{\lambda}\right)\right).\]
Additionally, the fact that $\|x\|_{\infty}\leq 1$ implies that $\|\Re(x)\|_{\infty}\leq 1.$ So $-1\leq \Re(x)\leq 1,$ and thus $1-\Re(x)\geq 0.$ So $|1-\Re(x)|=1-\Re(x),$ and the Lemma now follows from the above estimate and Corollary \ref{C:sublevel set characterization}.
\end{proof}

We now prove Lemma \ref{C:reductoin to positive case}.

\begin{proof}[Proof of Lemma \ref{C:reductoin to positive case}]
Recall that if $M=L(G)$ and $\tau=\ip{\cdot \delta_{1},\delta_{1}},$ then we may have a canonical identification $L^{2}(M,\tau)\cong \ell^{2}(G)$ which sends $\hat{1}$ to $\delta_{1}.$ So it is enough to show that if $1-\left(\frac{\lambda(x)^{*}+\lambda(x)}{2}\right)$ has an $L^{2}$ formal inverse, then $1-\lambda(x)$ has an $L^{2}$ formal inverse.

By the triangle inequality, if $x\in \C(G)$ and $|\widehat{x}|\in \Prob(G),$ then $\|\lambda(x)\|\leq 1.$
So Lemma \ref{C:reductoin to positive case} now follows from Lemma \ref{lem:general reduction to sa case appendix}.

\end{proof}
\subsection{Fuglede-Kadison determinants of lopsided and semi-lopsided elements}\label{S:FKD}

We now recall the definition of Fuglede-Kadison determinants for elements in a tracial von Neumann algebra.

\begin{defn}
Let $(M,\tau)$ be a tracial von Neumann algebra, and let $x\in M.$ Define the Fuglede-Kadison determinant of $x$ by
\[\Det_{M}(x)=\exp\left(\int \log(t)\,d\mu_{|x|}(t)\right)\]
with the convention that $\exp(-\infty)=0.$
\end{defn}

 Let $(M,\tau)$ be a tracial von Neumann algebra. Given any $x\in M,$ by \cite{BrownFKD} there is a unique probability measure $\mu_{x}$ defined on the Borel subsets of the spectrum of $x,$ so that
\[\log\Det_{M}(\lambda-x)=\int \log|\lambda-z|\,d\mu_{x}(z)\]
for all $\lambda\in \C.$ In the case that $x$ is self-adjoint, this agrees with the measure $\mu_{x}$ defined earlier in this section.
We call $\mu_{x}$ the \emph{Brown measure} of $x,$ this measure is an analogue of the eigenvalue distribution of $x$ (it may be that the support of $\mu_{x}$ is a \emph{proper} subset of the spectrum of $x$). Brown showed in Theorem 3.10 of \cite{BrownFKD} that:
\[\tau(x^{k})=\int z^{k}\,d\mu_{x}(z)\]
for all $k\in \N.$

The following result is how we compute Fuglede-Kadison determinants for semi-lopsided elements. This result is surely well known, but we include the proof for completeness.
\begin{prop}
 Let $(M,\tau)$ be a tracial von Neumann algebra, and let $x\in M$ be a contraction (i.e. $\|x\|\leq 1$). Then:
\begin{enumerate}[(a)]
\item\label{I:limitingformula}
$\lim_{t\to 1}\Det_{M}(1-tx)=\Det_{M}(1-x).$
\item\label{I:sumformula} If $\sum_{k=1}^{\infty}\frac{\tau(x^{k})}{k}$ is conditionally convergent, then
\[\Det_{M}(1-x)=\left|\exp\left(-\sum_{k=1}^{\infty}\frac{\tau(x^{k})}{k}\right)\right|.\]
\end{enumerate}
\end{prop}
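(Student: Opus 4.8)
The plan is to reduce everything to the Brown-measure formula $\log\Det_{M}(\lambda-x)=\int\log|\lambda-z|\,d\mu_{x}(z)$ and the moment identity $\tau(x^{k})=\int z^{k}\,d\mu_{x}(z)$ recalled above. Since $\|x\|_{\infty}\le 1$ we have $\sigma(x)\subseteq\overline{\mathbb{D}}$, so $\mu_{x}$ is a probability measure supported in the closed unit disk and every $z$ appearing below satisfies $|z|\le 1$, hence $\Re(z)\le 1$. Using that $\Det_{M}(cA)=|c|\,\Det_{M}(A)$ for a scalar $c>0$ (immediate from the definition, as $|cA|=c|A|$) and that $\mu_{x}$ is a probability measure, I would first record that for $t\in(0,1)$, writing $1-tx=t(\tfrac1t-x)$,
\[\log\Det_{M}(1-tx)=\int\log|1-tz|\,d\mu_{x}(z)=\log t+\int\log\bigl|\tfrac1t-z\bigr|\,d\mu_{x}(z).\]

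For part (\ref{I:limitingformula}) I would prove the limit as $t\uparrow 1$, which is the direction needed below. Put $\lambda=1/t$, so $\lambda\downarrow 1$ as $t\uparrow 1$. For $\lambda>1\ge\Re(z)$ we have $\tfrac{\partial}{\partial\lambda}\log|\lambda-z|=\tfrac{\lambda-\Re(z)}{|\lambda-z|^{2}}>0$, so $\lambda\mapsto\log|\lambda-z|$ decreases to $\log|1-z|$ as $\lambda\downarrow 1$ for each $z\in\overline{\mathbb{D}}$ (with value $-\infty$ permitted at $z=1$). Fixing $\lambda_{0}>1$, these functions are dominated above by the integrable function $\log|\lambda_{0}-z|\le\log(\lambda_{0}+1)$, so the monotone convergence theorem gives $\int\log|\lambda-z|\,d\mu_{x}\to\int\log|1-z|\,d\mu_{x}=\log\Det_{M}(1-x)$, while $\log t\to 0$. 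Hence $\lim_{t\uparrow 1}\Det_{M}(1-tx)=\Det_{M}(1-x)$.

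For part (\ref{I:sumformula}) I would expand the integrand. For $t\in(0,1)$ and $|z|\le 1$,
\[\log|1-tz|=\Re\log(1-tz)=-\sum_{k=1}^{\infty}\frac{t^{k}\Re(z^{k})}{k},\]
with the partial sums dominated uniformly in $z$ by $\sum_{k}\tfrac{t^{k}}{k}=-\log(1-t)<\infty$. Interchanging sum and integral and applying the moment identity yields
\[\log\Det_{M}(1-tx)=-\sum_{k=1}^{\infty}\frac{t^{k}}{k}\,\Re\,\tau(x^{k})=-\Re\sum_{k=1}^{\infty}\frac{t^{k}\tau(x^{k})}{k}.\]
Assuming $\sum_{k}\tfrac{\tau(x^{k})}{k}$ converges, Abel's theorem (applied to the real and imaginary parts of this complex power series) gives $\lim_{t\uparrow 1}\sum_{k}\tfrac{t^{k}\tau(x^{k})}{k}=\sum_{k}\tfrac{\tau(x^{k})}{k}$. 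Combining with part (\ref{I:limitingformula}) gives $\log\Det_{M}(1-x)=-\Re\sum_{k}\tfrac{\tau(x^{k})}{k}=\Re\bigl(-\sum_{k}\tfrac{\tau(x^{k})}{k}\bigr)$, and exponentiating via $|\exp(w)|=\exp(\Re w)$ yields the stated formula.

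The step I expect to be the crux is the lower bound in part (\ref{I:limitingformula}): a priori, mass of the spectral distribution $\mu_{|1-tx|}$ escaping toward $0$ (small singular values) could force $\log\Det_{M}(1-tx)$ to drop below $\log\Det_{M}(1-x)$ in the limit. Weak convergence of spectral distributions together with upper semicontinuity of $\log$ only gives the easy inequality $\limsup_{t\to 1}\log\Det_{M}(1-tx)\le\log\Det_{M}(1-x)$ for free; equivalently this is the upper semicontinuity of the logarithmic potential $\lambda\mapsto\int\log|\lambda-z|\,d\mu_{x}$ coming from its subharmonicity. The substitution $\lambda=1/t$ is precisely the device that turns the problematic liminf into a monotone limit. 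It is, however, genuinely one-sided: for $\lambda<1$ (that is, $t>1$) the sign of $\lambda-\Re(z)$ is no longer controlled and the monotonicity is lost, so a fully two-sided statement would require a separate argument for $t\downarrow 1$. Since only $t\uparrow 1$ enters the proof of part (\ref{I:sumformula}), this is all that is needed.
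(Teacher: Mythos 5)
Your proposal is correct and follows essentially the same route as the paper: for (a) the substitution $\lambda=1/t$ combined with the monotonicity of $\lambda\mapsto\log|\lambda-z|$ for $\lambda\geq 1\geq \Re(z)$ and the monotone convergence theorem, and for (b) the uniformly convergent expansion of $\log(1-tz)$ on $\overline{\mathbb{D}}$, the Brown-measure moment identity, and Abel's theorem. Your closing observation that the argument is genuinely one-sided (only $t\uparrow 1$) is a fair point of precision, but it does not change the substance since that is the only limit used.
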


\begin{proof}

(\ref{I:limitingformula}): Let $\mu_{x}$ be the Brown measure of $x.$ By definition,
\[\int \log|1-tz|\,d\mu_{x}(z)=\log\Det_{M}(1-tx),\]
and $\mu_{x}$ is a measure supported on $\Bbb D=\{z\in \C:|z|\leq 1\}.$ For $t\in (0,\infty),$
\[\Det_{M}(1-tx)=t\Det_{M}(t^{-1}-x).\]
Observe that, for all $z\in \Bbb D,$  and all $t,s\in (0,1)$ with $t<s<1$ we have that
\[-\log|t^{-1}-z|\leq -\log|s^{-1}-z|,\]
and $-\log|t^{-1}-z|\geq -\log(3)$ for all $t\in (\frac{1}{2},1).$ So it follows from the monotone convergence theorem that
\[\lim_{t\to 1}\log \Det_{M}(1-tx)=\lim_{t\to 1}\log(t)+\int \log|t^{-1}-z|\,d\mu_{x}(z)=\int \log|1-z|\,d\mu_{x}(z)=\log\Det_{M}(1-x).\]

(\ref{I:sumformula}):
It suffices to show that
\[\log \Det_{M}(1-x)=\Re\left(-\sum_{k=1}^{\infty}\frac{\tau(x^{k})}{k}\right).\]
Let $\log(z)$ denote the branch of the complex logarithm defined on the right-half plane and which has $\log(1)=0.$
By part $(a)$ we have that
\[\log\Det_{M}(1-x)=\lim_{t\to 1}\log\Det_{M}(1-tx)=\lim_{t\to 1}\int \log|1-tz|\,d\mu_{x}(z)=\lim_{t\to 1}\Re\left(\int \log(1-tz)\,d\mu_{x}(z)\right).\]
For $0<t<1,$ the sum $\sum_{k=1}^{\infty}\frac{t^{k}z^{k}}{k}$ converges uniformly on $\Bbb D$ to $-\log(1-tz).$ Hence, for $0<t<1:$
\[\int\log(1-tz)\,d\mu_{x}(z)=-\sum_{k=1}^{\infty}\int \frac{t^{k}z^{k}}{k}\,d\mu_{x}(z)=-\sum_{k=1}^{\infty}\frac{t^{k}\tau(x^{k})}{k},\]
the last equality following by Theorem 3.10 of \cite{BrownFKD}. By Abel's theorem
\[\lim_{t\to 1}\sum_{k=1}^{\infty}\frac{t^{k}\tau(x^{k})}{k}=\sum_{k=1}^{\infty}\frac{\tau(x^{k})}{k},\]
since $\sum_{k=1}^{\infty}\frac{\tau(x^{k})}{k}$ is conditionally convergent. Thus we have that
\[\log\Det_{M}(1-x)=\Re\left(-\sum_{k=1}^{\infty}\frac{\tau(x^{k})}{k}\right).\]

\end{proof}

\begin{cor}\label{C:FKD Calculation}
Let $G$ be a countable, discrete, group and suppose that $G$ has a left-invariant partial order $\preceq.$ Let $S\subseteq \{g\in G:g\succ 1\},$ and let $(a_{s})_{s\in S}$ be complex numbers. Suppose that $a\in \C\setminus\{0\}$ and that $\sum_{s\in S}|a_{s}|\leq |a|.$ Finally, set $f=a+\sum_{s\in S}a_{s}s.$ Then
\[\log\Det_{L(G)}(f)=\log|a|.\]
\end{cor}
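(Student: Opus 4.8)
The plan is to factor out the leading coefficient, reduce to computing $\Det_{L(G)}(1-\lambda(x))$ for a suitable contraction $x$, and then use the order structure to annihilate every trace power that enters the series formula of the preceding proposition.

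First I would write $f = a(1-x)$, where $x = -\frac{1}{a}\sum_{s\in S}a_s s \in \ell^1(G)$, so that $\widehat{x}(s) = -a_s/a$ for $s\in S$ and $\widehat{x}(g)=0$ otherwise. The hypothesis $\sum_{s\in S}|a_s|\le |a|$ gives $\|\widehat{x}\|_1\le 1$, whence $\|\lambda(x)\|_\infty\le\|\widehat{x}\|_1\le 1$, i.e. $\lambda(x)$ is a contraction in $L(G)$. Using the identity $\Det_{L(G)}(c\,y) = |c|\,\Det_{L(G)}(y)$ for a nonzero scalar $c$ and $y\in L(G)$—immediate from $|c\,y| = |c|\,|y|$ together with the definition $\log\Det_{L(G)}(y)=\int\log t\,d\mu_{|y|}(t)$, since $\mu_{|c\,y|}$ is the pushforward of $\mu_{|y|}$ under $t\mapsto |c|t$—I would reduce the claim to showing $\Det_{L(G)}(1-\lambda(x)) = 1$, i.e. $\log\Det_{L(G)}(1-\lambda(x)) = 0$.

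Next I would apply part (\ref{I:sumformula}) of the preceding proposition to the contraction $\lambda(x)$. This requires checking that $\sum_{k\ge 1}\tau(\lambda(x)^k)/k$ is conditionally convergent, for which I compute $\tau(\lambda(x)^k) = \widehat{x^k}(1)$, the coefficient of the identity in the $k$-fold convolution power of $\widehat{x}$. Expanding, $\widehat{x^k}(1) = \sum_{s_1,\dots,s_k\in S}\widehat{x}(s_1)\cdots\widehat{x}(s_k)\,[s_1\cdots s_k = 1]$, an absolutely convergent sum. The crux is the order-theoretic observation that none of these products equals the identity. Setting $P=\{g\in G:g\succ 1\}\supseteq S$, left-invariance of $\preceq$ makes $P$ a subsemigroup: from $1\prec h$ and left-multiplication by $g\succ 1$ one gets $g\prec gh$, and transitivity with $1\prec g$ yields $gh\succ 1$; moreover $1\notin P$ since $\succ$ is strict. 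Hence any product $s_1\cdots s_k$ of elements of $S\subseteq P$ satisfies $s_1\cdots s_k\succ 1$, so in particular $s_1\cdots s_k\ne 1$. Therefore $\tau(\lambda(x)^k)=\widehat{x^k}(1)=0$ for every $k\ge 1$, the series is identically zero (hence conditionally convergent), and part (\ref{I:sumformula}) gives $\Det_{L(G)}(1-\lambda(x)) = |\exp(0)| = 1$. Combined with the reduction, this yields $\log\Det_{L(G)}(f)=\log|a|$.

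I expect the only genuine subtlety—rather than a true obstacle—to be verifying that $P$ is closed under multiplication using only \emph{left}-invariance of the order (right-invariance is not assumed), so that the products $s_1\cdots s_k$ indeed remain strictly above $1$ and never return to the identity. Everything else is routine: the scaling behaviour of the Fuglede--Kadison determinant, the fact that no self-adjointness of $\lambda(x)$ is needed for part (\ref{I:sumformula}), and the observation that a trivially vanishing series satisfies its convergence hypothesis.
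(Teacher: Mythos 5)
Your proposal is correct and follows essentially the same route as the paper: factor $f=a(1-x)$, note $\lambda(x)$ is a contraction, apply the series formula for $\Det_{L(G)}(1-x)$, and kill every term via $\tau(x^{k})=0$, which the paper leaves as ``not hard to see'' and you justify by checking that the positive cone is a subsemigroup under left-invariance alone. No gaps.
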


\begin{proof}
Set $x=-\frac{1}{a}\sum_{s\in S}a_{s}s,$ then $\|x\|\leq 1.$
Since $f=a(1-x),$ we have:
\[\log\Det_{L(G)}(f)=\log|a|+\log\Det_{L(G)}(1-x)=\log|a|+\Re\left(-\sum_{k=1}^{\infty}\frac{\tau(x^{k})}{k}\right).\]
Since $S\subseteq\{g\in G:g\succ 1\},$ it is not hard to see that $\tau(x^{k})=0$ for every integer $k\geq 1.$ This completes the proof.

\end{proof}

\section{On $\ell^{2}$ formal inverses}

\subsection{ $\ell^{p}$ formal inverses of balanced elements}\label{S:inverses}

We start by addressing some of the invertibility conditions in the paper and show that in may cases they are optimal. The invertibility conditions on $f\in \Z(G)$ that occur in this paper and previous other works (in increasing order of generality) are typically the following:
\begin{itemize}
    \item $f$ has an $\ell^{1}$ formal inverse, \cite{BowenEntropy, Den, DenSchmidt, LSVHomoc},
    \item $f$ is invertible in the full $C^{*}$-algebra of $G,$ \cite{KLi},
    \item $\lambda(f)$ is invertible (\cite{Li, Me7, Me13}),
    \item $f$ has an $\ell^{2}$ formal inverse (\cite{MeWE}).
\end{itemize}

Since they are the ones relevant to our paper, we  discuss when the first, third and fourth  of these invertibility hypotheses for well balanced $f$ occur in the following proposition. Compare \cite[Appendix A]{Li} for a more involved discussion in the amenable case of how and when the first and second hypotheses differ.

\begin{prop}
Let $G$ be a countable, discrete group and $f\in \Z(G)$ be semi-lopsided. Let $H=\ip{\supp(\widehat{f})}.$ Then
\begin{enumerate}[(a)]
\item  if $f$ is well-balanced, it does not have an $\ell^{1}$ formal inverse, \label{I:easy stuff inverses again}
\item if $f$ is well-balanced, then $\lambda(f)$ is invertible if and only if $G$ is amenable, \label{I:amenable charcterize inverses}
\item if $G$ is nonamenable, then $\lambda(f)$ is invertible. \label{I: this is not hard either}
\end{enumerate}

\end{prop}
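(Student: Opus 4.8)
The plan is to prove the three parts with increasingly analytic tools: (a) is purely algebraic, while (b) and (c) rest on Kesten's amenability criterion applied to a self-adjoint symmetrization of $\lambda(f)$.

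For (a), I would use the augmentation homomorphism $\varepsilon\colon \ell^{1}(G)\to\C$, $\varepsilon(\beta)=\sum_{g\in G}\widehat{\beta}(g)$. This is the $\ell^{1}$-extension of the trivial representation, and it is a continuous unital algebra homomorphism: $\varepsilon(\beta\gamma)=\varepsilon(\beta)\varepsilon(\gamma)$ by Fubini using absolute summability. If $f$ had an $\ell^{1}$ formal right inverse $\alpha$, then applying $\varepsilon$ to $\alpha f=\delta_{1}$ would give $\varepsilon(\alpha)\varepsilon(f)=1$. But $f$ well-balanced means $\sum_{g}\widehat{f}(g)=0$, i.e. $\varepsilon(f)=0$, a contradiction. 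There is no obstacle here.

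For (b) and (c), write $f=m(1-x)$ with $m=\tau(f)>0$, $|\widehat{x}|\in\Prob(G)$ and $\supp(\widehat{x})=\supp(\widehat{f})\setminus\{1\}$; then $\|\lambda(x)\|\leq\|\widehat{x}\|_{1}\leq 1$, and $\lambda(f)=m(1-\lambda(x))$ is invertible iff $1\notin\sigma(\lambda(x))$. I would first reduce to the case $\supp(\widehat{x})=G$: since $\supp(\widehat{x})\subseteq H=\ip{\supp(\widehat{f})}$, the operator $\lambda(x)$ preserves each subspace $\ell^{2}(Hg)$ and acts there as a copy of $\lambda_{H}(x)$ on $\ell^{2}(H)$, so $\lambda_{G}(f)=\bigoplus_{Hg\in H\backslash G}\lambda_{H}(f)$ and invertibility of $\lambda_{G}(f)$ is equivalent to that of $\lambda_{H}(f)$; thus invertibility depends only on $H$, and I may assume $\supp(\widehat{x})$ generates $G$. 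Set $y=\tfrac{x+x^{*}}{2}$, so $\lambda(y)=\Re\lambda(x)$ is self-adjoint with real symmetric coefficients and $\|\lambda(y)\|\leq\|\lambda(|\widehat{y}|)\|$ via the domination $|\ip{\lambda(s)\xi,\xi}|\leq\ip{\lambda(s)|\xi|,|\xi|}$ applied coefficientwise. Two elementary facts then convert spectral data for $\lambda(y)$ into invertibility of $1-\lambda(x)$: if $1-\lambda(y)\geq\varepsilon>0$ then $\Re\ip{(1-\lambda(x))\eta,\eta}\geq\varepsilon\|\eta\|_{2}^{2}$ forces $1-\lambda(x)$ and its adjoint to be bounded below, hence $\lambda(f)$ invertible; and since $\|\lambda(x)\|\leq 1$ one has $\|(1-\lambda(x))\eta\|_{2}^{2}\leq 2\ip{(1-\lambda(y))\eta,\eta}$. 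For the nonamenable implication (this is (c), and the ``invertible'' direction of (b)), $|\widehat{y}|$ is symmetric nonnegative with $\||\widehat{y}|\|_{1}\leq 1$; if the norm is $<1$ we are done, and otherwise $|\widehat{y}|$ is a symmetric probability measure whose support generates $G$, so Kesten's theorem gives $\|\lambda(|\widehat{y}|)\|<1$ since $G$ is nonamenable. Either way $\|\lambda(y)\|<1$, so $1-\lambda(y)\geq 1-\|\lambda(y)\|>0$ and the first fact yields invertibility.

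For the converse in (b) I assume $f$ well-balanced and $G$ amenable. Then $\widehat{x}\geq 0$, so $\widehat{y}=\tfrac12(\widehat{x}+\widehat{x}^{*})\geq 0$ is a symmetric probability measure generating $G$, and Kesten gives $\|\lambda(y)\|=1$. Nonnegativity of $\widehat{y}$ forces $\sup\sigma(\lambda(y))=\|\lambda(y)\|=1$: if the norm were attained only at the bottom of the spectrum, the odd convolution moments $\tau(\lambda(y)^{2n+1})=\widehat{y}^{*(2n+1)}(1)\geq 0$ would become negative for large $n$. Hence there are unit $\eta_{n}$ with $\ip{\lambda(y)\eta_{n},\eta_{n}}\to 1$, and the second fact gives $\|(1-\lambda(x))\eta_{n}\|_{2}\to 0$, so $1-\lambda(x)$ is not bounded below and $\lambda(f)$ is not invertible. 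I expect the main obstacle to be the positivity bookkeeping in this amenable direction, namely justifying $\sup\sigma(\lambda(y))=\|\lambda(y)\|$ and confirming that $\supp(|\widehat{y}|)$ still generates $G$ (no cancellation in $\widehat{x}+\widehat{x}^{*}$) so that Kesten applies on the nose; for well-balanced $f$ both are automatic since $\widehat{x}\geq 0$. I would also record that the coset reduction shows the dichotomy is governed by $H$, so the sharp form of (b) reads ``$\lambda(f)$ is invertible if and only if $G$ is \emph{nonamenable}'' (equivalently, not invertible iff $G$ is amenable), in agreement with (c); when $\supp(\widehat{f})$ generates $G$ one has $H=G$ and the distinction disappears.
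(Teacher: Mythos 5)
Your proposal is correct, and while your part (a) is exactly the paper's argument (the augmentation functional $t(\xi)=\sum_{g}\xi(g)$ kills any well-balanced $f$), in (b)--(c) you take a genuinely different and in one respect more robust route. For the non-invertibility direction the paper is more direct than you are: amenability of $H$ yields unit vectors $\xi_{n}$ with $\|\lambda(h)\xi_{n}-\xi_{n}\|_{2}\to 0$ for $h$ in the (finite) support, hence $\|(1-x)\xi_{n}\|_{2}\to 0$ by convexity, with no need for your detour through Kesten's norm theorem plus the odd-moment argument that $\sup\sigma(\lambda(y))=\|\lambda(y)\|$; your detour is nevertheless valid, since faithfulness of the trace makes the support of $\mu_{\lambda(y)}$ equal to the spectrum, so the moment estimate goes through. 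In the invertibility direction the comparison reverses: the paper asserts $\|\lambda(x)\|<1$ (resp.\ $\|\lambda(y)\|<1$ with $\widehat{y}=|\widehat{x}|$) by citing Kesten's theorem and sums the Neumann series, but the norm form of Kesten requires a \emph{symmetric} measure, and for the generally nonsymmetric $\widehat{x}$ this step can literally fail: for the well-balanced $f=2-a-b\in\Z(\F_{2})$ one has $\|\lambda(x)\|^{2}=\|\lambda(\widehat{x}^{*}*\widehat{x})\|=1$ because $\widehat{x}^{*}*\widehat{x}$ is supported on the amenable subgroup $\ip{a^{-1}b}$ --- yet $\lambda(f)$ is invertible, exactly by your mechanism: the symmetrization $y=\frac{x+x^{*}}{2}$ has $\|\lambda(y)\|=\sqrt{3}/2<1$, and the accretivity bound $\Re\ip{(1-\lambda(x))\eta,\eta}\geq (1-\|\lambda(y)\|)\|\eta\|_{2}^{2}$ bounds $1-\lambda(x)$ and its adjoint below. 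So your symmetrize-then-use-numerical-range scheme, which applies Kesten only to the symmetric measure $|\widehat{y}|$, proves the statement at a point where the paper's own proof has a gap (alternatively one can repair the paper by using a spectral-radius, rather than norm, form of nonamenability, since spectral radius $<1$ still sums the Neumann series). The cancellation worry you flagged is harmless: any sign cancellation in $\widehat{x}+\widehat{x}^{*}$ strictly drops $\||\widehat{y}|\|_{1}$ below $1$, landing in your easy case, and absent cancellation $\supp|\widehat{y}|=S\cup S^{-1}$ generates $H$. Finally, your closing diagnosis is right and matches the paper's actual proof: as printed, (b) has the biconditional backwards and (b)--(c) conflate $G$ with $H$ (witness $f=2-a-a^{-1}\in\Z(\F_{2})$: $G$ is nonamenable but $\lambda(f)$ is not invertible since $H\cong\Z$); the paper's proof works with $H$ throughout, and your coset decomposition $\ell^{2}(G)=\bigoplus_{Hg\in H\backslash G}\ell^{2}(Hg)$ makes explicit the reduction the paper leaves implicit.
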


\begin{proof}

(\ref{I:easy stuff inverses again}): Define $t\colon \ell^{1}(G)\to \C$ by $t(\xi)=\sum_{g\in G}\xi(g).$ Direct computations show that $t(\xi*\eta)=t(\xi)t(\eta)$ for all $\xi,\eta\in \ell^{1}(G)$ where $*$ is convolution. So $t(f\xi)=t(\widehat{f})t(\xi)=0$ for all $\xi\in \ell^{1}(G).$ But then obviously there is no $\xi\in\ell^{1}(G)$ with $f\xi=\delta_{1}.$

(\ref{I:amenable charcterize inverses}): Write $f=m(1-x)$ where $\widehat{x}\in \Prob(G).$ First suppose that $H$ is amenable. Then there is a sequence $(\xi_{n})_{n\in\N}$ in $\ell^{2}(G)$ with $\|\xi_{n}\|_{2}=1$ and $\|\lambda(h)\xi_{n}-\xi_{n}\|_{2}\to_{n\to\infty}0$ by \cite[Appendix G]{BHV}. Since $\widehat{x}\in \Prob(G),$ we thus have that $\|\xi_{n}-x\xi_{n}\|_{2}\to 0.$ Hence, we have that $\|(1-x)\xi_{n}\|_{2}\to 0.$ By the open mapping theorem, if $1-\lambda(x)$ were invertible we would have that there is a constant $C>0$ so that $\|(1-\lambda(x))\zeta\|_{2}\geq C\|\zeta\|_{2}$ for all $\zeta\in \ell^{2}(G).$ Since $\|(1-x)\xi_{n}\|_{2}\to_{n\to\infty}0,$ and $\|\xi_{n}\|_{2}=1,$ we must have that $1-\lambda(x)$ is not invertible. So $\lambda(f)$ is not invertible.

Conversely, suppose that $H$ is not amenable. Then, by \cite[Appendix G]{BHV} we have that $\|\lambda(x)\|_{B(\ell^{2}(G))}<1.$ So $1-\lambda(x)$ is invertible and it inverse is given by $\sum_{n}\lambda(x)^{n}.$

(\ref{I: this is not hard either}): If $f$ is lopsided, this is obvious. So we may assume that $f=m(1-x)$ where $|\widehat{x}|\in \Prob(G).$ Let $y\in \R(G)$ be so that $\widehat{y}=|\widehat{x}|. $ Then, for all $\xi\in \ell^{2}(G),$ we have that $|x\xi|\leq y|\xi|,$ so $\|x\xi\|_{2}\leq \|y|\xi|\|_{2}\leq \|\lambda(y)\|_{B(\ell^{2}(G))}\|\xi\|_{2}.$ So $\|\lambda(x)\|_{B(\ell^{2}(G))}\leq \|\lambda(y)\|_{B(\ell^{2}(G))}.$ As in (\ref{I:amenable charcterize inverses}), we know that $\|\lambda(y)\|_{B(\ell^{2}(G))}<1.$ So $\|\lambda(x)\|_{B(\ell^{2}(G))}<1$ as well, and as in (\ref{I:amenable charcterize inverses}) this implies that $1-\lambda(x)$ is invertible. So $\lambda(f)$ is invertible.

\end{proof}

It is easy to see that $\lambda(f)$ being invertible is equivalent to $f$ being invertible in the group von Neumann algebra (as used and discussed in \cite{Li, Me7, Me13}).
By our work in Section \ref{S:growth} we know that, in the setup of the above Proposition, that if $\ip{a^{-1}b:a,b\in \supp(\widehat{f})\setminus\{1\}}$ is infinite, and if  $H$ either has superpolynomial growth or polynomial growth of degree at least $5,$ then $f$ has an $\ell^{2}$ formal inverse. If $f$ is well-balanced, and $H$ is amenable, then $\lambda(f)$ is not invertible. So we have many examples of $f$ which have an $\ell^{2}$ formal inverse, but are not invertible in the group von Neumann algebra.


\end{document}